\documentclass[12pt,a4paper]{article}%
\usepackage{amssymb}
\usepackage{amsmath}
\usepackage{amsfonts}
\usepackage[dvips]{graphicx}
\usepackage{mathrsfs}
\usepackage[pdftex]{hyperref}
\usepackage{pdfpages}%
\usepackage[all]{xy}
\setcounter{MaxMatrixCols}{30}
\providecommand{\U}[1]{\protect\rule{.1in}{.1in}}
\hypersetup{
	colorlinks=true,
	citecolor = blue
}
\newtheorem{theorem}{Theorem}

\newtheorem{corollary}[theorem]{Corollary}

\newtheorem{definition}[theorem]{Definition}

\newtheorem{lemma}[theorem]{Lemma}

\newtheorem{proposition}[theorem]{Proposition}
\newtheorem{remark}[theorem]{Remark}

\newenvironment{proof}[1][Proof]{\noindent\textbf{#1.} }{\ \rule{0.5em}{0.5em}}

\textwidth 165mm
\textheight 230.7mm
\oddsidemargin=0mm
\evensidemargin=0mm
\addtolength{\topmargin}{-1in}
\begin{document}
	
	\author{V\'{\i}ctor A. Vicente-Ben\'{\i}{}tez\\{\small Instituto de Matemáticas de la U.N.A.M. Campus Juriquilla}\\{\small Boulevard Juriquilla 3001, Juriquilla, Querétaro C.P. 076230 M\'{e}xico }\\ {\small  va.vicentebenitez@im.unam.mx } }
	\title{Transmutation operators for Schr\"odinger equations with distributional potentials and the associated  impedance equation}
	\date{}
	\maketitle
	
	\begin{abstract}
		We present the construction of an integral transmutation operator for the Schr\"odinger equation 
		\[
		-y'' + q(x)y = \lambda y, \quad x \in J, \ \lambda \in \mathbb{C},
		\]
		in the case where $q$ is the distributional derivative of an $L^2$ function on a bounded interval $J \subset \mathbb{R}$. Such a transmutation operator transforms solutions of $ v'' + \lambda v = 0 $ into solutions of the Schr\"odinger equation. The construction of the integral transmutation operator relies on a new regularization of the distributional Schr\"odinger equation based on the Polya factorization in terms of a solution $f$ that does not vanish on the closure of $J$. The existence of such a function $f$ is established, together with a constructive method for its computation.
		
		As a consequence of the Polya factorization, we obtain an integro-differential transmutation operator for the associated Sturm--Liouville operator in impedance form related to $f$, along with smoothness conditions for the transmutation kernel. Furthermore, we introduce the Darboux transform for both the Schr\"odinger and impedance operators and describe their relationships with the corresponding transmutation operators. Finally, we develop several series representations for the solutions, including the spectral parameter power series and the Neumann series of spherical Bessel functions.
	\end{abstract}
	
	\textbf{Keywords: } Distributional Schr\"odinger equation; Transmutation operators; Polya factorization; Sturm-Liouville equation in impedance form; Darboux transform.
	\newline 
	
	\textbf{MSC Classification:} 34B24; 34A25; 34L40; 41A30; 47G20.
	
	\section{Introduction}
	
	The theory of transmutation operators, more than fifty years after its creation based on the work of Jacques Delsarte \cite{delsarte}, has demonstrated its effectiveness not only as a theoretical tool for the qualitative analysis of solutions to Sturm--Liouville-type equations, but also for their practical construction and for solving both direct and inverse boundary value problems. Roughly speaking, a transmutation operator is a linear homeomorphism $\mathbf{T}$ on a given topological vector space that relates two linear operators $\mathbf{A}$ and~$\mathbf{B}$ defined on a subspace through a similarity relation of the form $\mathbf{A}\mathbf{T} = \mathbf{T}\mathbf{B}$. The idea is to transmute the simpler operator $\mathbf{B}$ into the more complicated one $\mathbf{A}$. In particular, the operator $\mathbf{T}$ transforms solutions of $\mathbf{B}v = \lambda v$ into solutions of $\mathbf{A}u = \lambda u$. Of particular interest is the case when $\mathbf{B} = -\frac{d^2}{dx^2}$ and $\mathbf{A}$ is the one-dimensional Schr\"odinger operator $-\frac{d^2}{dx^2} + q(x)$, where $q$ is a {\it regular function}, that is, an $L^p$-function in a bounded interval. A first contribution to the construction of such transmutation operators was made by A. Ya. Povzner \cite{povzner}, who discovered that certain solutions of the Schr\"odinger equation 
	\begin{equation}\label{eq:schrodingerintro}
		-y''+q(x)y=\lambda y,\quad 0<x<\ell,
	\end{equation}
	admit the integral representation
	\begin{equation}\label{eq:opratorintro}
		y(\lambda,x)= e^{i\sqrt{\lambda}x}+\int_{-x}^xK(x,t)e^{i\sqrt{\lambda}t}dt.
	\end{equation}
	
	Later studies focused on analyzing the properties of the transmutation integral kernel $K$ \cite{bondarenko,camposlbases,camposstandard,carroll,hryniv1,minedeltas,marchenko,NSBF1}, one of the most significant results being its application to the theory of inverse spectral problems via the Gelfand-Levitan equation \cite{gelfand}.
	
	In recent years, the study of integral representations of the form \eqref{eq:opratorintro} has gained particular importance in the development of series-type analytical representations for the solutions of equation \eqref{eq:schrodingerintro}. One of the most significant advances is the explicit characterization of certain families of transmutation operators acting on the set of monomials $\{x^k\}_{k=0}^{\infty}$. The construction of such transmutation operators is based on the so-called {\it Polya factorization} of the Schr\"odinger operator. This can be described as follows: given a continuous solution $f$ of \eqref{eq:schrodingerintro}, whose regularity depends on the potential $q$, and which has no zeros in the interval of interest, the Schr\"odinger operator can be expressed through the following factorization:
	\begin{equation}\label{eq:polyaintro}
		-\frac{1}{f(x)}\frac{d}{dx}\left(f^2(x)\frac{d}{dx}\left(\frac{y(x)}{f(x)}\right)\right).
	\end{equation}
	The existence of the nonvanishing solution was established for the case where $q$ is continuous in \cite{spps} and later extended to $L^2$ potentials perturbed by a sum of delta point interactions in \cite{minedeltas}. This factorization makes it possible to obtain a complete system of solutions in terms of recursive integrals of $f$, denoted by $\{\varphi_f^{(k)}\}_{k=0}^{\infty}$ and referred to as the {\it formal powers associated with} $f$ \cite{spps}. In fact, the solutions of equation \eqref{eq:schrodingerintro} can be expressed as power series in the spectral parameter $\rho=\sqrt{\lambda}$, whose coefficients in $x$ are precisely the formal powers. This result is known as the {\it SPPS method} and has proven to be an effective tool for solving direct spectral problems, such as the computation of eigenvalues and eigenfunctions (see \cite{spps,sppscampos}). Moreover, the formal powers form a complete system in $L^p(-\ell,\ell)$ and, under certain regularity conditions on $f$, also in $W^{k,2}(-\ell,\ell)$ \cite{KravTremblay1,mineimpedance3}. Furthermore, when the potential $q$ is continuous, there exists a transmutation kernel $K_f(x,t)$ satisfying the Goursat conditions $K_f(x,x)=\frac{f'(0)}{2}+\frac{1}{2}\int_0^xq(s)ds$, $K_f(x,-x)=\frac{f'(0)}{2}$ (see \cite[Ch. 1]{marchenko}). The corresponding transmutation operator, denoted by $\mathbf{T}_f$, satisfies the mapping property \cite{camposlbases} $\mathbf{T}_f[x^k]=\varphi_f^{(k)}(x)$ for all $k\in \mathbb{N}_0$. Thus, the action of the operator $\mathbf{T}_f$ on a dense set of functions is explicitly known. This result was extended to $L^p$-type potentials in \cite{camposstandard}. In \cite{NSBF1} it was shown that the transmutation kernel admits a Fourier-Legendre series representation of the form $K_f(x,t)=\sum_{m=0}^{\infty}\frac{a_m(x)}{x}P_m\left(\frac{t}{x}\right)$, where the coefficients $\{a_m(x)\}_{m=0}^{\infty}$ can be computed explicitly by a recursive integration procedure. Moreover, it is known that $q(x)=\frac{a_0''(x)}{a_0(x)+1}$. This series representation of the kernel leads to the so-called {\it Neumann series of Bessel functions (NSBF) representation} for the solutions of \eqref{eq:schrodingerintro}, that is, the solutions $u_0(\rho,x)=\mathbf{T}_f[\cos(\rho x)]$ and $u_1(\rho,x)=\mathbf{T}_f[\sin(\rho x)]$ admit the following series representations:
	\begin{equation}\label{eq:nsbfintro}
		u_j(\rho,x)=\frac{1+(-1)^j}{2}\cos(\rho x)+\frac{1-(-1)^j}{2}\sin(\rho x)+\sum_{k=0}^{\infty}(-1)^ka_{2k+j}(x)j_{2k+j}(\rho x), \quad j=0,1,
	\end{equation}
	where $j_{\nu}(\zeta)=\sqrt{\frac{\pi}{2\zeta}}J_{\nu+\frac{1}{2}}(\zeta)$ stands for the spherical Bessel functions (see \cite{NSBF1}). These types of representations are particularly useful for describing spectral data, such as the characteristic function of an eigenvalue problem, and have proven to be a powerful tool for the numerical computation of eigenvalues and eigenfunctions. Moreover, in the analysis of inverse spectral problems (namely, the recovery of the potential and boundary conditions from certain spectral data such as pairs of eigenvalues and normalization constants or the Weyl function), the NSBF series have demonstrated great versatility for the practical solution of such problems. In \cite{vkinverse1} it was shown that the existence of the NSBF series \eqref{eq:nsbfintro} allows one to reduce several inverse problems to the solution of a system of linear algebraic equations whose unknowns are the coefficients $\{a_m(x)\}_{m=0}^{\infty}$. Given the relationship of the potential $q$ and the coefficient $a_0$, it is possible to recover $q$ from the first component of the solution vector. This approach is highly efficient for numerical computations and has been successfully applied to various types of inverse spectral problems in recent years \cite{vkinverse1,vkinverse2,vkstinverse2}.
	
	The aim of this paper is to extend the construction of the transmutation operator $\mathbf{T}_f$ and the corresponding series representations of the solutions to the case when $q$ is a distribution belonging to the class $W^{-1,2}$. Integral representations for the solutions $u_j(\rho,x)$ are already known \cite{bondarenko,shkalikov1}, as well as a transmutation relation in the sense of Delsarte is known for Schr\"odinger operators with Dirichlet and Neumann conditions \cite{alveberio,hryniv1}. The standard approach to regularizing operators with distributional coefficients, established in \cite{shkalikov1}, consists of rewriting the Schr\"odinger operator in terms of the quasi-derivative defined by an antiderivative $\sigma\in L^2$ of the potential $q$. In this work, we adopt a different approach, introducing a new type of regularization based on Polya’s factorization. To this end, we show that for every distributional potential there always exists a nonvanishing solution $f$, and that Polya's factorization is valid for functions $y\in W^{1,2}$ for which the Schrödinger operator defines a regular distribution on $L^2$. Moreover, we demonstrate that the domain of regularization of the Schr\"odinger operator consists of those functions $y\in W^{1,2}$ satisfying $y'-\frac{f'}{f}y\in W^{1,2}$. Our result encompasses potentials given by complex Radon measures and extends the results of \cite{minedeltas} for $L^2$ potentials perturbed by finitely many delta point interactions.

	Using Polya's factorization, we reduce equation \eqref{eq:schrodingerintro} to a Dirac-type system of equations. Based on the results in \cite{haratumyant}, we establish the existence of the integral representation \eqref{eq:opratorintro}. The main properties of the operator $\mathbf{T}_f$ are analyzed, including its continuity from $L^2(-\ell,\ell)$ onto $ L^2(0,\ell)$ and from $C[-\ell,\ell]$ onto $C[0,\ell]$. One of our principal results is the transmutation mapping property $\mathbf{T}_f[x^k]=\varphi_f^{(k)}$, together with the SPPS and  NSBF representations for the solutions of \eqref{eq:schrodingerintro}. Furthermore, we show that the formal powers $\{\varphi_f^{(k)}\}_{k=0}^{\infty}$ form an $L$-basis for the distributional Schr\"odinger operator \cite{camposstandard,fage}. From the transmutation mapping property, we demonstrated that $\mathbf{T}_f$ transmutes the operator $-\frac{d^2}{dx^2}$ into the distributional Schr\"odinger operator in the sense of Delsarte. As a consequence of Polya's factorization, we generalize the concept of the {\it Darboux transformed operator}, that is, the corresponding distributional equation associated with $\frac{1}{f}$, and we establish a relation between the operator $\mathbf{T}_f$ and its associated Darboux transform $\mathbf{T}_{\frac{1}{f}}$. The Darboux transform and its related factorization are widely used in quantum mechanics to construct pairs of supersymmetric potentials \cite{cooper} and constitute a powerful tool for the construction of exactly solutions of Sturm-Liouville problems \cite{guliyev} and transmutation operators for partial differential equations \cite{minedarboux}.
	
	The second part of this work is devoted to the construction of a transmutation operator for the Sturm-Liouville equation in impedance form $\mathbf{L}_f=-\frac{1}{f^2}\frac{d}{dx}f^2\frac{d}{dx}$. Polya's factorization establishes the equivalence between the Schr\"odinger and impedance operators via the Liouville transform $y\mapsto \frac{y}{f}$. Integral representations for the solutions of the impedance equation have been studied in \cite{alveberio,carroll, mineimpedance2}, where their applications to inverse spectral problems are explored. For the case when $f$ is of class $C^1$, it was shown in \cite{mineimpedance1} that there exists a transmutation operator that maps the second derivative into the impedance operator, in the form of the integro-differential operator
	\[
	\widehat{\mathbf{T}}_fu(x)=u(x)-\int_{-x}^{x}\widehat{K}_f(x,t)u'(t)dt.
	\]
	This result was later extended to the case $f\in W^{1,\infty}$ in \cite{mineimpedance3}. We extend this result to the case when $f\in W^{1,2}$. With the aid of the Liouville transform and the operator $\mathbf{T}_f$, we show that $\widehat{\mathbf{T}}_f$ is a transmutation operator in the sense of Delsarte, and that the kernel $\widehat{K}_f$ is continuous and belongs to the class $W^{1,2}$ in the triangular domain $0\leq x\leq \ell$, $|t|\leq x$. Furthermore, we prove that $\widehat{K}_f$ satisfies a hyperbolic equation with the Goursat conditions $\widehat{K}_f(x,x)=1-\frac{1}{f(x)}$, $\widehat{K}_f(x,-x)=0$. Additionally, we show that if $\{f_n\}$ is a sequence of impedance functions such that $\frac{f_n'}{f}\rightarrow \frac{f'}{f}$ in $L^2$, then the corresponding transmutation kernels $\{\widehat{K}_{f_n}\}$ converge weakly to $\widehat{K}_f$ in $W^{1,2}$, and $\widehat{\mathbf{T}}_{f_n}$ converges to $\widehat{\mathbf{T}}_f$ in the strong operator topology of $\mathcal{B}(W^{1,2}(-\ell,\ell),W^{1,2}(0,\ell))$. 
	
	The paper is structured as follows. Section 2 presents the correct definition of the distributional Schr\"odinger operator and equation, the concept of domain of regularization, and the standard regularization in terms of the antiderivative $\sigma$ of $q$. Section 3 introduces the new regularization based on the Polya factorization given by the nonvanishing solution $f$, together with a new characterization of the domain of regularization and the quasiderivative in terms of $f$. Section 4 presents the spectral parameter power series representation for the solutions in terms of the formal powers associated with $f$. In Section 5, we introduce the Darboux transform and analyze its main properties. Section 6 is devoted to the construction of the transmutation operator $\mathbf{T}_f$ and the study of its main analytical properties. Section 7 focuses on proving the existence of the nonvanishing solution $f$ and provides an algorithm for its construction. Section 8 presents approximation results for the kernel $K_f$ and the transmutation operator $\mathbf{T}_f$. In Section 9, we develop the Neumann series of Bessel functions representation for the solutions of the Schr\"odinger equation. Finally, Section 10 is devoted to the construction of a transmutation operator for the Sturm-Liouville operator in impedance form and to the analysis of the corresponding transmutation kernel.
	
	\section{Background on the distributional Schr\"odinger equation}
	 Let $X$ be a topological linear space and $X'$ its topological dual. The action of a functional $f\in X'$ over $x\in X$ will be denoted by $(f|x)_X$.
	Let $J\subset \mathbb{R}$ be a bounded open interval,  $C_0^{\infty}(J)$ the space of $C^{\infty}$ functions with compact support in $J$, and  $(C_0^{\infty}(J))'$ its topological dual, whose elements are called distributions.  A distribution $f\in (C_0^{\infty}(J))'$ is said to be {\it regular} if there exists $\tilde{f}\in L^1_{loc}(J)$ such that $(f|\phi)_{C_0^{\infty}(J)}=\int_J\tilde{f}\phi$ for all $\phi\in C_0^{\infty}(J)$, and $f$ is called $L^p$-regular if $\tilde{f}\in L^p(J)$.  Let $W^{k,p}(J)$, $k\in \mathbb{N}$, denote the Sobolev space of functions in $L^p(J)$ whose first $k$-distributional derivatives are $L^p$-regular. It is known that $u\in W^{k,p}(J)$ iff $u\in C^{k-1}(\overline{J})$, $u^{(k-1)}\in AC(\overline{J})$ and $u^{(k)}\in L^p(J)$. We recall the continuous embedding $W^{1,p}(J)\hookrightarrow C(\overline{J})$. According to \cite[Cor. 8.10]{brezis} $uv\in W^{1,p}(J)$ whenever $u,v\in W^{1,p}(J)$. The space $W_0^{1,p}(J)$ is the closure of $C_0^{\infty}(J)$ in $W^{1,p}(J)$ and consists of functions $u\in W^{1,p}(J)$ such that $u|_{\partial J}=0$.   The space $W^{-1,p'}(J)$ is the topological dual of $W_0^{1,p}(J)$. Since $J$ is bounded, \cite[Prop. 8.14]{brezis} implies that every $f\in W^{-1,p'}(J)$ can be represented as the distributional derivative of function $F \in L^{p'}(J)$, that is, 
	\[
	(f|\phi)_{W_0^{1,p}(J)}=-\int_JF\phi' \qquad \forall \phi \in W_0^{1,p}(J).
	\]
	Similar notation is used for Sobolev spaces $W^{k,p}(\Omega)$ on open subsets of $\mathbb{R}^N$. Given Banach spaces $X$ and $Y$, we denote by $\mathcal{B}(X,Y)$ the space of bounded linear operators from $X$ to $Y$. When $X=Y$, we denote this space by $\mathcal{B}(X)$.  We recall that a subset $E\subset X$ is said to be complete in $X$ if $\operatorname{Span}E$ is dense in $X$. The characteristic function of a set is denoted by $\chi_A$. Throughout the paper, we use the notation $\mathbb{N}_0=\mathbb{N}\cup\{0\}$.  The weak convergence of a sequence $\{x_n\}$ in a Banach space is denoted by $x_n\rightharpoonup x$, while convergence in the norm (strong convergence) is denoted by $x_n\rightarrow x$.
	\newline
	
	Let $q\in W^{-1,2}(J)$. We consider the Schr\"odinger equation with distributional potential $q$:
	\begin{equation}\label{eq:Schrodinger1}
		-y''+q(x)y=\lambda y,\quad x\in J, \; \lambda\in \mathbb{C}.
	\end{equation}
	We look for solutions $y\in W^{1,2}(J)$. The left-hand side of \eqref{eq:Schrodinger1}, denoted by $\mathbf{S}_qy$, defines a distribution in $W_0^{1,2}(J)$ as follows:
	\begin{equation}\label{eq:actionopS}
		(\mathbf{S}_qy|\phi)_{W^{1,2}_0(J)}:= -(y''|\phi)_{W^{1,2}_0(J)}+(yq|\phi)_{W^{1,2}_0(J)} =\int_Jy'\phi'+(q|y\phi)_{W^{1,2}_0(J)}. 
	\end{equation}
	Since $y\phi\in W_0^{1,2}(J)$, the pairing $(q|y\phi)_{W^{1,2}(J)}$ is well defined. Hence, we obtain the operator $\mathbf{S}_q: W ^{1,2}(J)\rightarrow W^{-1,2}(J)$ whose action is given by \eqref{eq:actionopS}. Of course, a solution $y\in W^{1,2}(J)$ of \eqref{eq:Schrodinger1} satisfies the condition that $\mathbf{S}_qy=\lambda y$ as distributions, so $\mathbf{S}_qy$ is an $L^2$-regular distribution.
	\begin{definition}
		The {\bf domain of} $L^2$-{\bf regularization} of $\mathbf{S}_q$ is the class
		\[
		\mathscr{D}_2(\mathbf{S}_q):=\{y\in W^{1,2}(J)\,|\, \mathbf{S}_qy \text{ is an } L^2\text{-regular distribution}\}.
		\]
	\end{definition}
	Abusing notation, when $y\in \mathscr{D}_2(\mathbf{S}_q)$ we write $\mathbf{S}_qy\in L^2(J)$. In this case, the solutions of \eqref{eq:Schrodinger1} belong to $\mathscr{D}_2(\mathbf{S}_q)$.

	A characterization of the action of $\mathbf{S}_qy$ can be obtained in terms of an antiderivative of $q$. Let  $\sigma \in L^2(J)$ satisfying $\sigma'=q$. Hence, the action of the distribution $\mathbf{S}_qy$ over $\phi\in W^{1,2}_0(J)$ is given by
	\begin{align*}
		(\mathbf{S}_qy|\phi)_{W^{1,2}_0(J)} & =  \int_J \{ y'\phi'-\sigma (y\phi)'\}= \int_J \{y'\phi'-\sigma y'\phi -\sigma y\phi'\} \\
		&= \int_J \{(y'-\sigma y)\phi'-\sigma (y'-\sigma y)\phi-\sigma^2y\phi\}
	\end{align*}
	
	Following the notation introduced in \cite{shkalikov1}, we denote the $\sigma$-quasiderivative of $y\in W^{1,2}(J)$ by
	
	\begin{equation}\label{eq:quasiderivative}
		y_{\sigma}^{[1]}:= y'-\sigma y.
	\end{equation}

	From the previous computations, the following proposition is immediate.
	\begin{proposition}
		Given $y\in W^{1,2}(J)$, the distribution $\mathbf{S}_qy$ is $L^2$-regular iff $y_{\sigma}^{[1]}\in W^{1,1}(J)$ and $(y_{\sigma}^{[1]})'+\sigma^2y\in L^2(J)$.
	\end{proposition}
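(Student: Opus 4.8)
The plan is to read off everything from the identity for the pairing $(\mathbf{S}_q y|\phi)$ established just above, restricted to test functions $\phi\in C_0^\infty(J)$ (which is the class appearing in the definition of $L^p$-regularity, so no density argument is needed). Put $z:=y_\sigma^{[1]}=y'-\sigma y$; since $y\in W^{1,2}(J)\hookrightarrow C(\overline J)\subset L^\infty(J)$ we have $\sigma y\in L^2(J)$, hence $z\in L^2(J)$, and the identity reads
\[
(\mathbf{S}_q y|\phi)_{W_0^{1,2}(J)}=\int_J z\,\phi'-\int_J(\sigma z+\sigma^2 y)\,\phi,\qquad \phi\in C_0^\infty(J),
\]
where $\sigma z\in L^1(J)$ as a product of two $L^2$ functions and $\sigma^2 y\in L^1(J)$ because $\sigma^2\in L^1(J)$ and $y\in L^\infty(J)$.

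For the ``if'' implication I would assume $z\in W^{1,1}(J)$ and $z'+\sigma^2 y\in L^2(J)$, then integrate by parts in $\int_J z\,\phi'$; the boundary terms vanish since $z\in W^{1,1}(J)\hookrightarrow C(\overline J)$ and $\phi$ has compact support, giving
\[
(\mathbf{S}_q y|\phi)=-\int_J\bigl(z'+\sigma z+\sigma^2 y\bigr)\phi,\qquad \phi\in C_0^\infty(J).
\]
Because now $z\in W^{1,1}(J)\hookrightarrow L^\infty(J)$, we get $\sigma z\in L^2(J)$, so the bracket $z'+\sigma z+\sigma^2 y$ lies in $L^2(J)$; this exhibits $\mathbf{S}_q y$ as an $L^2$-regular distribution with density $-(z'+\sigma z+\sigma^2 y)$.

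For the ``only if'' implication I would assume $\mathbf{S}_q y$ is $L^2$-regular, so $(\mathbf{S}_q y|\phi)=\int_J g\,\phi$ for some $g\in L^2(J)$ and all $\phi\in C_0^\infty(J)$; subtracting this from the identity above yields $\int_J z\,\phi'=\int_J(g+\sigma z+\sigma^2 y)\,\phi$ for every $\phi\in C_0^\infty(J)$. The right-hand integrand lies in $L^1(J)$ using only the a priori fact $z\in L^2(J)$, so by the standard characterization of distributional derivatives the weak derivative of $z$ equals $-(g+\sigma z+\sigma^2 y)\in L^1(J)$; combined with $z\in L^2(J)\subset L^1(J)$ this gives $z\in W^{1,1}(J)$. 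A final bootstrap --- now that $z\in W^{1,1}(J)\hookrightarrow L^\infty(J)$, hence $\sigma z\in L^2(J)$ --- turns $z'+\sigma^2 y=-(g+\sigma z)$ into an element of $L^2(J)$, completing the proof.

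The only point requiring genuine care is the two-step integrability bootstrap for $z$: one must first extract $z\in W^{1,1}(J)$ knowing merely $z\in L^2(J)$ (so only $\sigma z\in L^1(J)$ is available at that stage), and only afterwards, invoking the continuity of $W^{1,1}$-functions, improve $\sigma z$ to $L^2(J)$ and obtain the $L^2$ membership of $z'+\sigma^2 y$. Everything else is an integration by parts, the elementary mapping $W^{1,1}(J)\hookrightarrow C(\overline J)$ recorded in Section~2, and Hölder's inequality.
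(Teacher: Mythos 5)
Your proposal is correct and follows exactly the route the paper intends: the paper derives the identity $(\mathbf{S}_qy|\phi)=\int_J\{y_\sigma^{[1]}\phi'-\sigma y_\sigma^{[1]}\phi-\sigma^2y\phi\}$ and then declares the proposition ``immediate,'' and your argument is precisely the careful unpacking of that claim. The two-step bootstrap you flag (first extracting $z\in W^{1,1}(J)$ from $\sigma z\in L^1(J)$ alone, then using $W^{1,1}(J)\hookrightarrow C(\overline J)$ to upgrade $\sigma z$ to $L^2(J)$) is exactly the detail the paper leaves implicit, and you handle it correctly.
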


	Another reason for considering functions for which the operator is $L^2$-regular is to obtain an operator in the Hilbert space $L^2(J)$. Abusing notation, for $y\in \mathscr{D}_2(\mathbf{S}_q)$ we write $\mathbf{S}_qy=-(y_{\sigma}^{[1]})'-\sigma y_{\sigma}^{[1]}-\sigma^2 y$. Hence, we have the operator $\mathbf{S}_q: \mathscr{D}_2(\mathbf{S}_q)\subset L^2(J)\rightarrow L^2(J)$. According to \cite{shkalikov1}, this is the maximal operator associated with the differential expression $-(y_{\sigma}^{[1]})'-\sigma y_{\sigma}^{[1]}-\sigma^2 y$. Several standard properties of equation \eqref{eq:Schrodinger1} were established in \cite{shkalikov1}. For instance, a proper definition for a Cauchy problem with initial data at $x_0\in \overline{J}$ is given in terms of $y(x_0)$ and $y_{\sigma}^{[1]}(x_0)$. For every $g\in L^2(J)$, the Cauchy problem consisting of $\mathbf{S}_qy=\lambda y+g$ with the initial conditions $y(x_0)=a_0$, $y_{\sigma}^{[1]}(x_0)=a_1$, admits a unique solution $y\in \mathscr{D}_2(\mathbf{S}_q)$.  The space of solutions of \eqref{eq:Schrodinger1} is two-dimensional, and two solutions $y,u$ are linearly independent iff their $\sigma$-Wronskian
	\[
	W_{\sigma}[y,v]:= yv_{\sigma}^{[1]}-vy_{\sigma}^{[1]}
	\]
	does not vanish at least at one point of $\overline{J}$ (the proof of these facts can be found in \cite{shkalikov1}).
	
	\section{New regularization based on Polya factorization}
	
	Now, we introduce a new type of regularization for the operator $\mathbf{S}_q$, based on a factorization in terms of a nonvanishing solution. Let $f\in \mathscr{D}_2(\mathbf{S}_q)$ be a solution of $f''-qf=0$ such that $f(x)\neq 0$ for all $x\in \overline{J}$. We call such a function $f$ a {\it nonvanishing solution} of the operator $\mathbf{S}_q$. A natural question arises: can we write $q=\frac{f''}{f}$ in the sense of distributions? To answer this, we first analyze the distribution $\frac{f''}{f}$. Since $f$ does not vanish on $\overline{J}$, a direct computation shows that $f(x)=f(x_0)e^{\int_{x_0}^x\frac{f'(t)}{f(t)}dt}$ for some $x_0\in \overline{J}$, so that $\frac{1}{f}\in AC(\overline{J})$ and $\left(\frac{1}{f}\right)'=-\frac{f'}{f^2}\in L^2(J)$. Thus, $\frac{1}{f}\in W^{1,2}(J)$ and the action of the distribution $\frac{f''}{f}$ is given by
	\begin{equation}\label{eq:distributionf}
		\left(\frac{f''}{f}\bigg{|}\phi\right)_{W_0^{1,2}(J)}=\left(f''\bigg{|}\frac{\phi}{f}\right)_{W_0^{1,2}(J)}=-\int_Jf'\left(\frac{\phi'f-\phi f'}{f^2}\right)=\int_J\left\{ \left(\frac{f'}{f}\right)^2\phi -\left(\frac{f'}{f}\right)\phi'\right\}
	\end{equation}
	Let us denote this distribution by $q_f$. Hence
	\begin{align*}
		(f''-fq_f|\phi)_{W_0^{1,2}}&=\int_J\left\{-f'\phi'-\left(\frac{f'}{f}\right)^2(f\phi)+\frac{f'}{f}(f\phi)'\right\}\\
		&= \int_J\left\{-f'\phi'-\frac{(f')^2}{f}\phi +\frac{(f')^2}{f}\phi +f'\phi'\right\}=0
	\end{align*}
	Thus, $fq=fq_f$ as distributions. For every $\phi\in W_0^{1,2}(J)$, we have $\frac{\phi}{f}\in W_0^{1,2}(J)$, and therefore
	\[
	(q|\phi)_{W_0^{1,2}(J)}=\left(fq\bigg{|}\frac{\phi}{f}\right)_{W_0^{1,2}(J)}=\left(fq_f\bigg{|}\frac{\phi}{f}\right)_{W_0^{1,2}(J)}=(q_f|\phi)_{W_0^{1,2}(J)}
	\]
	Hence $q=q_f$ as distributions. Consequently,
	\[
	\int_J\sigma \phi' = \int_J\left\{\frac{f'}{f}\phi'-\left(\frac{f'}{f}\right)^2\phi\right\}.
	\]
	Therefore, we can express $\sigma$ in terms of $\frac{f'}{f}$ and an anti-derivative of $\left(\frac{f'}{f}\right)^2$. For convenience, we normalize $f$ as follows: given $x_0\in \overline{J}$, we assume that $f(x_0)=1$. 
	Hence $q=q_f$ as distributions. Thus, we can choose
	\begin{equation}\label{eq:sigmaf}
		\sigma_f(x)=\frac{f'(x)}{f(x)}+\int_{x_0}^x\left(\frac{f'(t)}{f(t)}\right)^2dt
	\end{equation}
	as an anti-derivative of $q_f$.
	
	\begin{definition}
		A potential $q\in W^{-1,2}(J)$ is called {\bf admissible} if there exists a nonvanishing solution $f\in \mathscr{D}_2(\mathbf{S}_q)$ of $\mathbf{S}_qf=0$. In such a case, we denote $q=q_f$, choose $\sigma_f$ as the corresponding anti-derivative, and denote the operator $\mathbf{S}_q$ by $\mathbf{S}_f$.
	\end{definition}
	
	\begin{theorem}\label{Th:maintheoremadmissible}
		Every distribution $q\in W^{1,2}(J)$ is admissible.
	\end{theorem}
   The proof of this theorem is presented in Section \ref{Sec:nonvanish}. 
	
	\begin{remark}\label{remark:bijection}
		Since $\frac{1}{f}\in W^{1,2}(J)$, it follows that the map $W^{1,2}(J)\ni y\mapsto \frac{y}{f}\in W^{1,2}(J)$ is a homeomorphism. Indeed, a direct computation shows that for every $y\in W^{1,2}(J)$,
		\[
		\|fy\|_{W^{1,2}(J)}\leq \sqrt{3C}\|f\|_{W^{1,2}(J)}\|y\|_{W^{1,2}(J)},
		\]
		where  $C$ denotes the norm of the embedding  $W^{1,2}(J)\hookrightarrow C(\overline{J})$. Similar for $\frac{y}{f}$.
	\end{remark}

	Denote $D=\frac{d}{dx}$. The {\it impedance operator} associated with {\it the proper impedance} $f$ is given by
	\begin{equation}\label{eq:}
		\mathbf{L}_fv=-\frac{1}{f^2}Df^2Dv=-D^2v-\frac{2f'}{f}Dv\qquad \text{for  }v\in W^{2,1}(J).
	\end{equation}
	Note that $\mathbf{L}_f:W^{2,2}(J)\rightarrow L^2(J)$.
	
	\begin{theorem}[Polya factorization]
		The following factorization holds (in the distributional sense):
		\begin{equation}\label{eq:Polyafact}
			\mathbf{S}_fy=-\frac{1}{f}Df^2D\frac{y}{f}\qquad \forall y\in \mathscr{D}_2(\mathbf{S}_f)
		\end{equation}
		or, equivalently,
		\begin{equation}\label{eq:Schrodingertoimpedance}
			\mathbf{S}_fy=f\mathbf{L}_f \left[\frac{y}{f}\right]\qquad \forall y\in \mathscr{D}_2(\mathbf{S}_f).
		\end{equation}
	\end{theorem}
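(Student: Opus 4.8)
The plan is to prove \eqref{eq:Polyafact} by pairing both sides with an arbitrary $\phi\in W_0^{1,2}(J)$ and checking that the two functionals on $W_0^{1,2}(J)$ agree; the equivalence with \eqref{eq:Schrodingertoimpedance} is then immediate from the definition of $\mathbf{L}_f$, since $-\frac{1}{f}Df^2D\frac{y}{f}=f\bigl(-\frac{1}{f^2}Df^2D\frac{y}{f}\bigr)=f\mathbf{L}_f\bigl[\frac{y}{f}\bigr]$. Throughout I write $v:=\frac{y}{f}\in W^{1,2}(J)$ (Remark~\ref{remark:bijection}) and I use that $\frac{1}{f}\in W^{1,2}(J)$, so that $y\phi$ and $\frac{\phi}{f}$ again lie in $W_0^{1,2}(J)$ whenever $\phi$ does; this is exactly what makes all the distributional pairings below legitimate, and is the only technical point that really needs attention.

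For the left-hand side, I would start from \eqref{eq:actionopS} with $q=q_f$, expand $(q_f|y\phi)_{W_0^{1,2}(J)}$ via the explicit formula \eqref{eq:distributionf} and $(y\phi)'=y'\phi+y\phi'$, and collect the coefficients of $\phi'$ and of $\phi$. Using $fv'=y'-\frac{f'}{f}y$ (equivalently $f^2v'=fy'-f'y$) this yields
\begin{align*}
	(\mathbf{S}_fy|\phi)_{W_0^{1,2}(J)}
	&=\int_J\Bigl\{y'\phi'+\bigl(\tfrac{f'}{f}\bigr)^2y\phi-\tfrac{f'}{f}y'\phi-\tfrac{f'}{f}y\phi'\Bigr\}\\
	&=\int_J\Bigl\{\bigl(y'-\tfrac{f'}{f}y\bigr)\phi'-\tfrac{f'}{f}\bigl(y'-\tfrac{f'}{f}y\bigr)\phi\Bigr\}
	=\int_J\bigl\{fv'\phi'-f'v'\phi\bigr\}.
\end{align*}
For the right-hand side, note that $f^2Dv=f^2v'\in L^2(J)$, so $Df^2Dv\in W^{-1,2}(J)$, and multiplication by $\frac{1}{f}\in W^{1,2}(J)$ is defined through the transpose; hence for $\phi\in W_0^{1,2}(J)$
\begin{align*}
	\Bigl(-\tfrac{1}{f}Df^2Dv\,\Big|\,\phi\Bigr)_{W_0^{1,2}(J)}
	&=-\Bigl(Df^2Dv\,\Big|\,\tfrac{\phi}{f}\Bigr)_{W_0^{1,2}(J)}
	=\int_Jf^2v'\Bigl(\tfrac{\phi}{f}\Bigr)'\\
	&=\int_Jf^2v'\,\frac{f\phi'-f'\phi}{f^2}
	=\int_J\bigl\{fv'\phi'-f'v'\phi\bigr\}.
\end{align*}
The two expressions coincide, so \eqref{eq:Polyafact} holds in $W^{-1,2}(J)$ for every $y\in W^{1,2}(J)$, in particular on $\mathscr{D}_2(\mathbf{S}_f)$.

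Finally, I would record that on $\mathscr{D}_2(\mathbf{S}_f)$ the right-hand side is genuinely $L^2$-regular and may be read classically. Put $g:=\mathbf{S}_fy\in L^2(J)$; the identity just proved says $\int_Jf^2v'\bigl(\tfrac{\phi}{f}\bigr)'=\int_Jg\phi$, and replacing $\phi$ by $f\phi$ (still admissible) gives $\int_Jf^2v'\phi'=\int_Jfg\phi$ for all $\phi\in C_0^\infty(J)$, i.e.\ $D(f^2Dv)=-fg\in L^2(J)$. Hence $f^2Dv\in W^{1,2}(J)$ and $-\frac{1}{f}D(f^2Dv)=g=\mathbf{S}_fy$ a.e., which also makes $\mathbf{L}_f\bigl[\frac{y}{f}\bigr]=-\frac{1}{f^2}D(f^2Dv)$ a well-defined element of $L^2(J)$. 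I do not anticipate a genuine obstacle here: beyond these short computations, the only thing to be careful about is keeping track of which products remain in $W_0^{1,2}(J)$, which follows from the algebra structure of $W^{1,2}(J)$ on the bounded interval $J$ together with $\frac{1}{f}\in W^{1,2}(J)$.
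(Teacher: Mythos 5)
Your proof is correct, and the first half (the computation of $(\mathbf{S}_fy|\phi)_{W_0^{1,2}(J)}$ down to $\int_J\{fv'\phi'-f'v'\phi\}$) coincides exactly with the paper's. Where you diverge is in how the right-hand side is handled. The paper, at the point where it has $\int_J\{v'(f\phi)'-2f'v'\phi\}$, invokes the $L^2$-regularity of $\mathbf{S}_fy$ to assert $v\in W^{2,1}(J)$ and then integrates by parts to land on $\int_J(f\mathbf{L}_fv)\phi$; that regularity assertion is stated rather tersely there and is only fully justified later in Theorem \ref{th:factorizacion}(iii)--(iv). You instead interpret $-\frac{1}{f}Df^2Dv$ directly as an element of $W^{-1,2}(J)$ via transposition against $\frac{\phi}{f}$, match the two functionals without any extra regularity on $v$, and only afterwards use $y\in\mathscr{D}_2(\mathbf{S}_f)$ (through the substitution $\phi\mapsto f\phi$) to conclude $D(f^2Dv)=-fg\in L^2(J)$ and hence that the factorization can be read classically. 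This buys you two things: the distributional identity \eqref{eq:Polyafact} is established for every $y\in W^{1,2}(J)$, not only on the domain of regularization, and the deduction that $f^2Dv\in W^{1,2}(J)$ (equivalently $v\in W^{2,2}(J)$) becomes an explicit corollary rather than an input. The cost is that you must make sense of the product of $\frac{1}{f}\in W^{1,2}(J)$ with a $W^{-1,2}$ distribution, but your definition by transposition is exactly the one the paper itself uses in Section 3 when showing $q=q_f$, so this is consistent with the surrounding framework. Both arguments are sound; yours is organized slightly more economically.
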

	
	\begin{proof}
		Let $y\in \mathscr{D}_2(\mathbf{S}_f)$ and set $v=\frac{y}{f}\in W^{1,2}(J)$. From equalities \eqref{eq:actionopS} and  \eqref{eq:distributionf}, we get
		\begin{align*}
			(\mathbf{S}_fy|\phi)_{W_0^{1,2}(J)}&=\int_J\left\{(fv)'\phi'+\left(\frac{f'}{f}\right)^2(fv\phi)-\frac{f'}{f}(fv\phi)'\right\}\\
			&= \int_J \left\{f'v\phi'+fv'\phi'+\frac{(f')^2}{f}v\phi -\frac{f'}{f}(f'v\phi+fv'\phi+fv\phi')\right\}\\
			&= \int_J \{fv'\phi'-f'v'\phi\} \\
			&= \int_J \{v'(f\phi)'-2f'v'\phi\}
		\end{align*}
		By Remark \ref{remark:bijection} and the assumption that $\mathbf{S}_fy$ is $L^2$-regular, it follows that $v\in W^{2,1}(J)$. Hence
		
		\begin{align*}
			(\mathbf{S}_fy|\phi)_{W_0^{1,2}(J)}= \int_J\{-v''f\phi-2f'v'\phi\}=\int_J(f\mathbf{L}_fv)\phi.
		\end{align*}
		Since $\mathbf{S}_fy$ is $L^2$-regular, we have $f\mathbf{L}_f\left[\frac{y}{f}\right]\in L^2(J)$ and we obtain \eqref{eq:Polyafact}, as desired.
	\end{proof}
	
	Let us introduce the operators defined for $y\in W^{1,2}(J)$ as follows:
	\begin{equation}\label{eq:operatorDf}
		\mathbf{D}_fy:= fD\frac{y}{f}=y'-\frac{f'}{f}y, \quad \mathbf{D}_{\frac{1}{f}}y:=\frac{1}{f}D(fy)=y'+\frac{f'}{f}y.
	\end{equation}
	
	As a corollary, we obtain the following properties.
	
	\begin{theorem}\label{th:factorizacion}
		The following statements hold:
		\begin{itemize}
			\item[(i)] $y_{\sigma_f}^{[1]}(x)=\mathbf{D}_fy(x)-y(x)\int_{x_0}^x\left(\frac{f'(t)}{f(t)}\right)^2dt\quad$ for $y\in W^{1,2}(J)$.
			\item[(ii)] $y_{\sigma_f}^{[1]}(x_0)=\mathbf{D}_fy(x_0)\quad$ for $y\in W^{1,2}(J)$.
			\item[(iii)] $\mathscr{D}_2(\mathbf{S}_f)=\{y\in W^{1,2}(J)\, |\, \mathbf{D}_fy\in W^{1,2}(J)\}$.
			\item[(iv)] The operator $\mathscr{D}_2(\mathbf{S}_f)\ni y\mapsto \frac{y}{f}\in W^{2,2}(J)$ is a bijection.
			\item[(v)] $\mathbf{S}_f y= -\mathbf{D}_{\frac{1}{f}}\mathbf{D}_fy\quad$ for $y\in \mathscr{D}_2(\mathbf{S}_f)$. 
		\end{itemize}
		
	\end{theorem}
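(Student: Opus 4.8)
The plan is to read off all five items from the Polya factorization \eqref{eq:Polyafact}, the product rule for Sobolev functions (\cite[Cor.~8.10]{brezis}), and the homeomorphism $y\mapsto y/f$ of Remark~\ref{remark:bijection}, handling them in the order (i)$\Rightarrow$(ii), then (iii), then (iv), then (v). Throughout I would write $v:=y/f\in W^{1,2}(J)$ and $g(x):=\int_{x_0}^x\big(f'(t)/f(t)\big)^2\,dt$, noting first that $f'/f\in L^2(J)$ (since $f'\in L^2(J)$ and $1/f\in C(\overline J)$), so that $g\in W^{1,1}(J)\cap C(\overline J)$ and every product formed below lands in the Sobolev space claimed.

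Items (i) and (ii) are pure substitution: by the choice \eqref{eq:sigmaf}, $\sigma_f=f'/f+g$, hence $y_{\sigma_f}^{[1]}=y'-(f'/f)y-gy=\mathbf{D}_fy-gy$, which is (i); evaluating at $x_0$ and using $g(x_0)=0$ gives (ii).

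For (iii) I would prove both inclusions. If $y\in\mathscr{D}_2(\mathbf{S}_f)$, then \eqref{eq:Polyafact} reads $-\tfrac1f(f^2v')'=\mathbf{S}_fy\in L^2(J)$, so $(f^2v')'=-f\,\mathbf{S}_fy\in L^2(J)$ (product of $C(\overline J)$ with $L^2$), giving $f^2v'\in W^{1,2}(J)$; since $1/f\in W^{1,2}(J)$, the product $\mathbf{D}_fy=fv'=\tfrac1f(f^2v')$ lies in $W^{1,2}(J)$. Conversely, if $y\in W^{1,2}(J)$ and $\mathbf{D}_fy\in W^{1,2}(J)$, then $v'=\tfrac1f\,\mathbf{D}_fy\in W^{1,2}(J)$, so $v\in W^{2,2}(J)$; the identity $(\mathbf{S}_fy|\phi)_{W_0^{1,2}(J)}=\int_J\{v'(f\phi)'-2f'v'\phi\}$, which the computation in the proof of the Polya factorization establishes for every $y\in W^{1,2}(J)$ (before any regularity is invoked), can now be integrated by parts — legitimate because $v'\in AC(\overline J)$ and $f\phi\in W_0^{1,2}(J)$ vanishes on $\partial J$ — to give $(\mathbf{S}_fy|\phi)_{W_0^{1,2}(J)}=\int_J(f\mathbf{L}_fv)\phi$ with $f\mathbf{L}_fv=-fv''-2f'v'\in L^2(J)$; hence $\mathbf{S}_fy$ is $L^2$-regular. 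These two halves also yield (iv): the map $y\mapsto v$ sends $\mathscr{D}_2(\mathbf{S}_f)$ into $W^{2,2}(J)$, and it is onto because for $v\in W^{2,2}(J)$ the function $y:=fv\in W^{1,2}(J)$ satisfies $\mathbf{D}_fy=fv'\in W^{1,2}(J)$, so $y\in\mathscr{D}_2(\mathbf{S}_f)$ by (iii); injectivity follows from Remark~\ref{remark:bijection}. Finally (v) is the computation $\mathbf{D}_{\frac{1}{f}}\mathbf{D}_fy=\tfrac1f D(f^2v')=\tfrac1f(f^2v')'=2f'v'+fv''=-f\mathbf{L}_fv=-\mathbf{S}_fy$, valid since $f^2v'\in W^{1,2}(J)$ as just shown.

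There is no conceptual obstacle; the one place requiring care is the Sobolev bookkeeping, and in particular the upgrade of $v$ from the $W^{2,1}(J)$-regularity appearing in the proof of the Polya factorization to the $W^{2,2}(J)$-regularity needed for (iii)--(v). That upgrade is exactly what the rewriting $(f^2v')'=-f\,\mathbf{S}_fy\in L^2(J)$ furnishes, via $v'=(1/f^2)(f^2v')$ together with $1/f^2=(1/f)^2\in W^{1,2}(J)$.
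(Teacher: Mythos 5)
Your proposal is correct and follows essentially the same route as the paper: (i)--(ii) by direct substitution of \eqref{eq:sigmaf} into the definition of the quasi-derivative, (iii)--(v) by reading the regularity off the Polya factorization $\mathbf{S}_fy=-\tfrac{1}{f}D(f^2D\tfrac{y}{f})$ together with the fact that multiplication by $f$ and $1/f$ preserves $W^{1,2}(J)$ (Remark~\ref{remark:bijection}). You are somewhat more explicit than the paper in two places where it says ``it is clear'' --- namely that the identity $(\mathbf{S}_fy|\phi)=\int_J\{v'(f\phi)'-2f'v'\phi\}$ holds for every $y\in W^{1,2}(J)$ before any regularity assumption, and the upgrade of $v$ to $W^{2,2}(J)$ via $v'=(1/f^2)(f^2v')$ --- but these are elaborations of the same argument, not a different one.
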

	
	\begin{proof}
		The proofs of (i) and (ii) follow directly from the definitions  \eqref{eq:quasiderivative} and \eqref{eq:sigmaf}.
		
		For (iii), by the Polya factorization and Remark \ref{remark:bijection}, it is clear that if $y\in W^{1,2}(J)$ satisfies $\mathbf{D}_fy\in W^{1,2}(J)$, then $y\in \mathscr{D}_2(\mathbf{S}_f)$. Conversely, suppose that $y\in \mathscr{D}_2(\mathbf{S}_f)$ and denote $g=\mathbf{S}_fy\in L^2(J)$. From (i), we have $\mathbf{D}_fy\in W^{1,1}(J)$, and by the Polya factorization $\frac{1}{f}D[f\mathbf{D}_fy]=-g\in L^2(J)$. Hence $D[f\mathbf{D}_fy]=-fg\in L^2(J)$ which implies $f\mathbf{D}_fy\in W^{1,2}(J)$. So, by Remark \ref{remark:bijection}, we conclude that $\mathbf{D}_fy\in W^{1,2}(J)$.
		
		For (iv), point (iii) shows that for $y\in \mathscr{D}_2(\mathbf{S}_f)$, if we define $v=\frac{y}{f}$, then $fDv=w\in W^{1,2}(J)$. By Remark \ref{remark:bijection}, we obtain  $Dv=\frac{w}{f}\in W^{1,2}(J)$, so $v\in W^{2,2}(J)$. Conversely, if $v\in W^{2,2}(J)$, the function $y=\frac{v}{f}\in W^{1,2}(J)$ satisfies  $(\mathbf{S}_fy|\phi)_{W^{1,2}_0(J)}=\left( f\mathbf{L}_f\left[\frac{y}{f}\right]\bigg{|}\phi \right)_{W^{1,2}_0(J)}$ for all $\phi\in W^{1,2}_0(J)$. Thus, $y\in \mathscr{D}_2(\mathbf{S}_f)$.
		
		Finally, (v) follows from (iii) and \eqref{eq:Polyafact}.
	\end{proof}
	\begin{remark}\label{Remark:cauchyproblem}
		By Theorem \ref{th:factorizacion}(ii), the initial and boundary conditions of $y_{\sigma_f}^{[1]}$ at $x=x_0$ can be reformulated in terms of $\mathbf{D}_fy(x_0)$.
	\end{remark}
	\begin{remark}\label{Remark:initialconditionsunitaryoperator}
		The operator $\mathscr{D}_2(\mathbf{S}_f)\ni y\mapsto \frac{y}{f}\in W^{2,2}(J)$ preserves the initial conditions at $x_0$. Indeed, set $v=\frac{y}{f}$. Since $f(x_0)=1$, hence $y(x_0)=v(x_0)$. From Theorem \ref{th:factorizacion}(ii), we conclude that
		$y_{\sigma_f}^{[1]}(x_0)=v'(x_0)$.
		
		As in the regular case, we call the operator $W^{2,2}(J)\ni v\mapsto fv\in \mathscr{D}_2(\mathbf{S}_f)$ the {\bf Liouville transform}. When $f$ is positive, the Liouville transform is a unitary operator from $L^2(J; f^2(x)dx)$ onto $L^2(J)$ and $\mathbf{L}_f$ and $\mathbf{S}_f$ are unitary equivalent.
	\end{remark}
	\begin{remark}
		From the factorization \eqref{eq:Polyafact} we obtain a solution $f_1$ of the equation $f_1''-qf_1=0$, which is given by
		\begin{equation}\label{eq:Abelsol}
			f_1(x):=f(x)\int_{x_0}^x\frac{dt}{f^2(t)}.
		\end{equation}
		Indeed, note that $\frac{f_1}{f}\in W^{2,2}(J)$ so $f_1\in \mathscr{D}_2(\mathbf{S}_f)$ and by \eqref{eq:Polyafact} it follows that $\mathbf{S}_ff_1=0$. The $\sigma_f$-Wronskian of $f$ and $f_1$ is computed as
		\begin{align*}
			W_{\sigma_f}[f,f_1](x) &= f(x)(f_1'(x)-\sigma_f(x) f_1(x))-(f'(x)-\sigma_f(x)f(x))f_1(x)\\
			&=f(x)f_1'(x)-f'(x)f_1(x)\\
			&= f(x)f'(x)\int_{x_0}^x\frac{dt}{f^2(t)}+1-f'(x)f(x)\int_{x_0}^x\frac{dt}{f^2(t)}=1.
		\end{align*}
		Hence $f$ and $f_1$ are linearly independent. As in the regular case, the solution $f_1$ will be called the {\bf Abel solution} associated with $f$.
	\end{remark}
	Let us consider the operator $\mathbf{R}_f:L^2(J)\rightarrow L^2(J)$ given by
	\begin{equation}\label{eq:operatorR}
		\mathbf{R}_fg(x):=f(x)\int_{x_0}^x\frac{1}{f^2(t)}\left[\int_{x_0}^tf(s)g(s)ds\right]dt,\quad g\in L^2(J).
	\end{equation}
	Note that $\mathbf{R}_f\in \mathcal{B}(L^2(J),W^{1,2}(J))$ and that for $g\in L^2(J)$, $\frac{1}{f}\mathbf{R}_fg\in W^{2,2}(J)$, so $\mathbf{R}_fg\in \mathscr{D}_2(\mathbf{S}_f)$. From \eqref{eq:Polyafact}, $\mathbf{S}_f\mathbf{R}_fg=g$ and $\mathbf{R}_f$ is a bounded right inverse for $\mathbf{S}_f$.
	
	A straightforward computation yields the following result.
	\begin{proposition}
		The unique solution of the Cauchy problem
		\begin{equation}
			\begin{cases}
				\mathbf{S}_fy=g,\\
				y(x_0)=c_0,\; \mathbf{D}_fy(x_0)=c_1,
			\end{cases}
		\end{equation}
		is given by
		\begin{equation}
			y=\mathbf{R}_fg+c_1f_1+c_0f.
		\end{equation}   
	\end{proposition}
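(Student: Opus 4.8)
The plan is to verify directly that the proposed function
\[
y=\mathbf{R}_fg+c_1f_1+c_0f
\]
meets the three requirements of the Cauchy problem, and then to obtain uniqueness by passing to the impedance equation through the Liouville transform. All the necessary ingredients are already in place: the right-inverse identity $\mathbf{S}_f\mathbf{R}_fg=g$ from the discussion preceding the statement, the facts $\mathbf{S}_ff=0$ (definition of a nonvanishing solution) and $\mathbf{S}_ff_1=0$ (remark on the Abel solution), the normalization $f(x_0)=1$, and the description of $\mathbf{D}_f$ in \eqref{eq:operatorDf}. Consequently I do not expect a genuine obstacle; the only point requiring a moment's care is to confirm that each summand lies in $\mathscr{D}_2(\mathbf{S}_f)$, so that both $\mathbf{S}_fy$ and the pointwise quantities $y(x_0)$ and $\mathbf{D}_fy(x_0)$ are meaningful.

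For existence, first note that $y\in\mathscr{D}_2(\mathbf{S}_f)$: indeed $\mathbf{R}_fg\in\mathscr{D}_2(\mathbf{S}_f)$ by the paragraph introducing \eqref{eq:operatorR}, $f\in\mathscr{D}_2(\mathbf{S}_f)$ by hypothesis, $f_1\in\mathscr{D}_2(\mathbf{S}_f)$ by the Abel-solution remark, and $\mathscr{D}_2(\mathbf{S}_f)$ is a linear space. The equation $\mathbf{S}_fy=g$ then follows from linearity of $\mathbf{S}_f$ together with $\mathbf{S}_f\mathbf{R}_fg=g$ and $\mathbf{S}_ff=\mathbf{S}_ff_1=0$. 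For the initial value, evaluating at $x=x_0$ and using that the integrals $\int_{x_0}^{x_0}(\cdot)$ vanish gives $\mathbf{R}_fg(x_0)=0$ and $f_1(x_0)=0$, hence $y(x_0)=c_0f(x_0)=c_0$. Finally, from \eqref{eq:operatorDf} one computes $\mathbf{D}_ff=fD(1)=0$, $\mathbf{D}_ff_1=fD\bigl(\int_{x_0}^xf^{-2}\bigr)=\tfrac1f$, and $\mathbf{D}_f\mathbf{R}_fg=fD\bigl(\tfrac1f\mathbf{R}_fg\bigr)=\tfrac{1}{f(x)}\int_{x_0}^xf(s)g(s)\,ds$; evaluating at $x_0$ yields $\mathbf{D}_fy(x_0)=c_1\,\mathbf{D}_ff_1(x_0)=c_1/f(x_0)=c_1$.

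For uniqueness, let $w$ be the difference of two solutions, so $w\in\mathscr{D}_2(\mathbf{S}_f)$, $\mathbf{S}_fw=0$, $w(x_0)=0$ and $\mathbf{D}_fw(x_0)=0$. By Theorem~\ref{th:factorizacion}(iv) the function $v:=w/f$ lies in $W^{2,2}(J)$, and by Remark~\ref{Remark:initialconditionsunitaryoperator} it satisfies $v(x_0)=0$ and $v'(x_0)=0$; moreover the Polya factorization \eqref{eq:Schrodingertoimpedance} gives $\mathbf{L}_fv=\tfrac1f\mathbf{S}_fw=0$, i.e. $D(f^2v')=0$ in $L^2(J)$. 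Since $f^2v'$ is continuous on $\overline{J}$, it equals the constant $f^2(x_0)v'(x_0)=0$, so $v'\equiv0$ and hence $v\equiv v(x_0)=0$, giving $w=fv\equiv0$. (Alternatively, uniqueness follows from the uniqueness of the Cauchy problem for $\mathbf{S}_f$ recalled in Section~2, upon noting via Theorem~\ref{th:factorizacion}(ii) that $w_{\sigma_f}^{[1]}(x_0)=\mathbf{D}_fw(x_0)=0$.) This completes the argument.
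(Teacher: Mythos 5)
Your proof is correct and is essentially the argument the paper has in mind: the paper states only that ``a straightforward computation yields the following result,'' and your direct verification of $\mathbf{S}_f y=g$ and of the initial conditions, together with uniqueness via the Liouville transform (or, as you note parenthetically, via Theorem~\ref{th:factorizacion}(ii) and the uniqueness of the quasiderivative Cauchy problem recalled in Section~2, which is exactly the reduction suggested by Remark~\ref{Remark:cauchyproblem}), is precisely that computation carried out in full. No gaps; the only care needed was membership of each summand in $\mathscr{D}_2(\mathbf{S}_f)$, which you address.
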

	
	\section{The spectral parameter power series}
	
	\begin{definition}
		The {\bf formal powers} associated with $f$ are the functions $\{\varphi_f^{(k)}\}_{k=0}^{\infty}$ defined recursively as follows:
		\begin{align}
			\varphi_f^{(0)}:= &f,\\
			\varphi_f^{(1)}:= &f_1,\\
			\varphi_f^{(k)}:= &k(k-1)\mathbf{R}_f\varphi_f^{(k-2)},\quad k\geq 2,
		\end{align}
	where $\mathbf{R}_f$ is the integral operator defined in \eqref{eq:operatorR}.
	\end{definition}
	\begin{remark}
		The first formal power satisfies the conditions $\varphi_f^{(0)}(x_0)=1$ and $\mathbf{D}_f\varphi_f^{(0)}(x_0)=0$, while the second one $\varphi_f^{(1)}(x_0)=0$ and $\mathbf{D}_f\varphi_f^{(1)}(x_0)=1$. The sub-sequences formal powers satisfy $\varphi_f^{(k)}(x_0)=\mathbf{D}_f\varphi_f^{(k)}(x_0)=0$. Furthermore, $\mathbf{S}_f\varphi_f^{(0)}=\mathbf{S}_f\varphi_f^{(1)}=0$ and 
		\begin{equation}
			\mathbf{S}_f\varphi_f^{(k)}=k(k-1)\varphi_f^{(k-2)},\quad k\geq 2.
		\end{equation}
	\end{remark}
	Let $\lambda=\rho^2$ with $\rho\in \mathbb{C}$, and let $e_f(\rho,x)$ be the unique solution of Eq. \eqref{eq:Schrodinger1} satisfying the initial conditions
	\begin{equation}\label{eq:initiale}
		e_f(\rho,x_0)=1,\quad \mathbf{D}_fe_f(\rho,x_0)=i\rho
	\end{equation}
	The following theorem generalizes the well-known spectral parameter power series representation to the distributional case.
	\begin{theorem}
		The solution $e_f(\rho,x)$ admits the spectral parameter power series (SPPS) representation
		\begin{equation}\label{eq:SPPS}
			e_f(\rho,x)=\sum_{k=0}^{\infty}\frac{(i\rho)^k\varphi_f^{(k)}(x)}{k!}.
		\end{equation}
		The series converges with respect to $x$ in the norm of $W^{1,2}(J)$, and its second derivative series converges in $W^{-1,2}(J)$. The series converges absolutely and uniformly with respect to the parameter $\rho$ on compact subsets of the complex plane.
	\end{theorem}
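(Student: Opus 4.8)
The plan is to pass to the normalized sequence $\psi_k:=\varphi_f^{(k)}/k!$. The recursion defining the formal powers then collapses to $\psi_0=f$, $\psi_1=f_1$ and $\psi_k=\mathbf{R}_f\psi_{k-2}$ for $k\ge 2$, so that $\psi_{2n}=\mathbf{R}_f^{\,n}f$, $\psi_{2n+1}=\mathbf{R}_f^{\,n}f_1$, and the series under study is simply $e_f(\rho,x)=\sum_{k\ge 0}(i\rho)^k\psi_k(x)$. Everything then hinges on one a priori estimate for the iterates of $\mathbf{R}_f$. Writing $m:=\min_{\overline J}|f|>0$, $M:=\|f\|_{C(\overline J)}$, $L:=|J|$ and $c:=M^2/m^2$, a short induction on $n$ applied to the explicit formula \eqref{eq:operatorR} (bound $1/f^2$ by $1/m^2$, bound the two $f$-factors by $M$, and integrate the resulting monomials) yields, for $g\in C(\overline J)$,
\[
|\mathbf{R}_f^{\,n}g(x)|\le \|g\|_{C(\overline J)}\,c^{\,n}\,\frac{|x-x_0|^{2n}}{(2n)!},\qquad x\in\overline J .
\]
In particular $\|\psi_{2n}\|_{C(\overline J)}\le Mc^{n}L^{2n}/(2n)!$ and $\|\psi_{2n+1}\|_{C(\overline J)}\le\|f_1\|_{C(\overline J)}c^{n}L^{2n}/(2n)!$, which already gives absolute convergence of the series in $C(\overline J)$ with the $\cosh$-type majorant $(M+\|f_1\|_{C(\overline J)})\cosh(\sqrt c\,|\rho|L)$; being locally bounded in $\rho$, this is the claimed uniform convergence on compact subsets of $\mathbb{C}$.

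To upgrade the convergence to the $W^{1,2}(J)$-norm I would use that $\mathbf{R}_f\in\mathcal{B}(L^2(J),W^{1,2}(J))$ together with $\|v\|_{L^2}\le L^{1/2}\|v\|_{C(\overline J)}$; then $\|\psi_k\|_{W^{1,2}}\le\|\mathbf{R}_f\|_{\mathcal{B}(L^2,W^{1,2})}\,L^{1/2}\|\psi_{k-2}\|_{C(\overline J)}$, so $\sum_k|\rho|^k\|\psi_k\|_{W^{1,2}}$ is dominated by $|\rho|^2$ times a $\cosh$-type series and hence converges, absolutely and locally uniformly in $\rho$. For the second-derivative series I would write, as distributions in $W^{-1,2}(J)$, $\psi_k''=q\psi_k-\mathbf{S}_q\psi_k$: multiplication by $q\in W^{-1,2}(J)$ is a bounded map $W^{1,2}(J)\to W^{-1,2}(J)$ (by the product estimate of Remark \ref{remark:bijection}, exactly as in the well-definedness of $\mathbf{S}_q$), while $\mathbf{S}_q\psi_k=\mathbf{S}_f\psi_k$ is, by the recursion, a scalar multiple of $\psi_{k-2}\in L^2(J)\hookrightarrow W^{-1,2}(J)$. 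Both majorant series converge by the estimate above, so $\sum_k(i\rho)^k\psi_k''$ converges in $W^{-1,2}(J)$ (to $q e_f-\lambda e_f$).

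It remains to identify the limit. I would invoke the Cauchy-problem Proposition of Section 3: the solution of $\mathbf{S}_fy=\lambda y$ with $y(x_0)=1$, $\mathbf{D}_fy(x_0)=i\rho$ is characterised as the unique fixed point of the affine map $y\mapsto \mathbf{R}_f(\lambda y)+i\rho f_1+f$, and this map is continuous on $W^{1,2}(J)$ because $\mathbf{R}_f\colon W^{1,2}(J)\to W^{1,2}(J)$ is bounded and multiplication by $f_1\in W^{1,2}(J)$ is bounded on $W^{1,2}(J)$. Applying it to the series term by term, using $\mathbf{R}_f\psi_k=\psi_{k+2}$ and $\lambda(i\rho)^k=-(i\rho)^{k+2}$, the sum telescopes back to $e_f(\rho,\cdot)$; by uniqueness in that Proposition, $e_f(\rho,\cdot)$ is precisely the solution with the prescribed Cauchy data, and in particular $\mathbf{S}_fe_f=\lambda e_f$ in $W^{-1,2}(J)$, the right-hand side being the $L^2$-regular function $\lambda e_f$, so $e_f(\rho,\cdot)\in\mathscr{D}_2(\mathbf{S}_f)$. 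Equivalently, one may apply the bounded operator $\mathbf{S}_q\colon W^{1,2}(J)\to W^{-1,2}(J)$ directly to the series and use $\mathbf{S}_f\varphi_f^{(k)}=k(k-1)\varphi_f^{(k-2)}$ to telescope to $\mathbf{S}_fe_f=(i\rho)^2e_f=\lambda e_f$, reading $e_f(\rho,x_0)=1$ off the embedding $W^{1,2}(J)\hookrightarrow C(\overline J)$ together with $\varphi_f^{(0)}(x_0)=1$, $\varphi_f^{(k)}(x_0)=0$ for $k\ge 1$.

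The genuinely load-bearing step is the a priori estimate: producing the $(2n)!$ in the denominator with explicit constants is what forces every later series — in $C(\overline J)$, in $W^{1,2}(J)$, in $W^{-1,2}(J)$, and uniformly in $\rho$ — to converge, the extra factor $k(k-1)$ built into $\varphi_f^{(k)}$ and the $|\rho|^k$ from the parameter being harmless against a factorial. The only other point requiring care is the initial condition $\mathbf{D}_fe_f(\rho,x_0)=i\rho$: since $\mathbf{D}_f$ maps $W^{1,2}(J)$ only into $L^2(J)$, its pointwise value at $x_0$ is not defined for a general element of the space, so the argument is best routed through the fixed-point/uniqueness statement of the Cauchy-problem Proposition rather than through termwise differentiation of the series (alternatively, one checks that $D[f\mathbf{D}_f\varphi_f^{(k)}]=-f\,\mathbf{S}_f\varphi_f^{(k)}$ is controlled well enough to sum $f\mathbf{D}_fe_f$ in $C(\overline J)$ and then evaluate at $x_0$).
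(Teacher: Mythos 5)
Your proof is correct, and while it rests on the same engine as the paper's --- factorial decay of the recursive integrals built from $f^{2}$ and $f^{-2}$ --- it is organized differently in two respects. First, where the paper rewrites $\varphi_f^{(k)}$ in terms of the classical even/odd recursive integrals $X^{(k)},\tilde{X}^{(k)}$ and cites the known estimates $|\tilde{X}^{(2k)}|\le C_1^kC_2^k/(k!)^2$ from the SPPS literature, you work directly with the iterates $\mathbf{R}_f^{\,n}$ and prove the bound $\|\mathbf{R}_f^{\,n}g\|_{C(\overline J)}\le \|g\|_{C(\overline J)}c^{\,n}L^{2n}/(2n)!$ by induction; the constants $c=M^2/m^2$ and $C_1C_2$ coincide, so the two estimates are interchangeable, but yours is self-contained and the subsequent upgrade to $W^{1,2}$ via $\mathbf{R}_f\in\mathcal{B}(L^2(J),W^{1,2}(J))$ is slightly slicker than the paper's termwise differentiation of $f\tilde{X}^{(2k)}$. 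Second, and more substantively, the identification of the sum with $e_f(\rho,\cdot)$: the paper verifies the equation and initial conditions termwise (``by construction''), which tacitly requires evaluating $\mathbf{D}_f$ of the sum at $x_0$, whereas you route this through the uniqueness statement of the Cauchy-problem Proposition by exhibiting the series as the fixed point of $y\mapsto\lambda\mathbf{R}_fy+i\rho f_1+f$; this is cleaner precisely because, as you note, $\mathbf{D}_f$ maps $W^{1,2}(J)$ only into $L^2(J)$, so its pointwise value at $x_0$ is not a continuous functional of the $W^{1,2}$-limit and the termwise argument needs the extra control you describe. Your treatment of the second-derivative series via $\psi_k''=q\psi_k-\mathbf{S}_q\psi_k$, with multiplication by $q$ bounded from $W^{1,2}(J)$ to $W^{-1,2}(J)$, is also more explicit than the paper's one-line ``consequently'' and is a worthwhile addition.
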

	\begin{proof}
		As in the case of regular potentials, the formal powers can be rewritten in terms of recursive integrals as follows. Define $X^{(0)}\equiv \tilde{X}^{(0)}\equiv 1$, and for $k\geq 1$ set
		\begin{align*}
			X^{(k)}(x)& =\int_{x_0}^{x}X^{(k-1)}(s)(f^2(s))^{(-1)^k}ds, \\
			\tilde{X}^{(k)}(x)& =\int_{x_0}^{x}\tilde{X}^{(k-1)}(s)(f^2(s))^{(-1)^{k-1}}ds.
		\end{align*}
		Hence 
		\[
		\varphi_f^{(k)} =k!f\cdot \begin{cases}
			\tilde{X}^{(k)}, & \text{if } k \text{ is even},\\
			X^{(k)}, & \text{if } k \text{ is odd}.
		\end{cases}
		\]
		In this way, the SPPS series \eqref{eq:SPPS} can be re-written as (see, e.g., \cite{spps} or  \cite{sppscampos})
		\begin{equation}\label{eq:auxiliarseries1}
			\sum_{k=0}^{\infty}\frac{(i\rho)^k\varphi_f^{(k)}}{k!}=\sum_{k=0}^{\infty}(-1)^k\rho^{2k}f\tilde{X}^{(2k)}+i\rho  \sum_{k=0}^{\infty}(-1)^k\rho^{2k}fX^{(2k+1)}
		\end{equation}
		The following estimates hold:
		\begin{equation}\label{eq:estimatestildeX}
			|\tilde{X}^{(2k)}(x)|,|X^{(2k)}(x)|\leq \frac{C1^kC_2^k}{(k!)^2}, \; |\tilde{X}^{(2k-1)}(x)|\leq \frac{C_1^{k-1}C_2^k}{(k-1)!k!}, \;\; |X^{(2k-1)}(x)|\leq \frac{C_1^kC_2^{k-1}}{k!(k-1)!},
		\end{equation}
		where $C_1=\|f^{-2}\|_{L^{\infty}(J)}$ and $C_2=\|f^2\|_{L^{\infty}(J)}$ (see \cite[Prop. 5]{sppscampos}). Since $D\tilde{X}^{(2k)}=f'\tilde{X}^{(2k)}+\frac{\tilde{X}^{(2k-1)}}{f}$, the above estimates shows that the series $\sum_{k=0}^{\infty}(-1)^k\rho^{2k}f\tilde{X}^{(2k)}$ converges with respect to the variable $x$ in $W^{1,2}(J)$ and uniformly in $\rho$ on every compact subset of $\mathbb{C}$. Similar for the second series. Consequently, the series of the distributional second derivative of both series converges in $W^{-1,2}(J)$. Furthermore, note that
		\[
		\mathbf{D}_f(f\tilde{X}^{(2k)})=\frac{1}{f}\tilde{X}^{(2k-1)},
		\]
		and by the estimates \eqref{eq:estimatestildeX}, $\mathbf{D}_f\sum_{k=0}^{\infty}(-1)^k\rho^{2k}f\tilde{X}^{(2k)}$ belongs to $W^{1,2}(J)$, that is, $\sum_{k=0}^{\infty}(-1)^k\rho^{2k}f\tilde{X}^{(2k)}$ belongs to $\mathscr{D}_2(\mathbf{S}_f)$. The fact that \eqref{eq:auxiliarseries1} satisfies \eqref{eq:Schrodinger1} follows from the relations
		\[
		\mathbf{S}_f\tilde{X}^{(2k)}=\tilde{X}^{(2k-2)},
		\]
		and satisfies the initial conditions of $e_f(\rho,x)$ by construction.
	\end{proof}

	We also introduce the solutions $C_f(\rho,x)$ and $S_f(\rho,x)$ satisfying the initial conditions
	\begin{align}
		C_f(\rho,x_0)=1, & \quad \mathbf{D}_fC_f(\rho,x_0)=0,\label{eq:cosineinitial}\\
		S_f(\rho,x_0)=0, & \quad \mathbf{D}_fS_f(\rho,x_0)=1.\label{eq:sineinitial}
	\end{align}	
	Applying the uniqueness of the Cauchy problem, a direct computation yields the relations
	\begin{align}
		C_f(\rho,x)=\frac{e_f(\rho,x)+e_f(-\rho,x)}{2},  & \quad  S_f(\rho,x)=\frac{e_f(\rho,x)-e_f(-\rho,x)}{2i\rho},\label{eq:sineasexponential}\\ e_f(\rho,x)=C_f(\rho,x)+i\rho S_f(\rho,x).\label{eq:exponentialtrigonometric}
	\end{align}
	As a corollary of \eqref{eq:sineasexponential}, we obtain the SPPS representation for the solutions $C_f(\rho,x)$ and $S_f(\rho,x)$. The proof is straightforward. 
	\begin{corollary}
		The solutions $C_f(\rho,x)$ and $S_f(\rho,x)$ admit the SPPS representation
		\begin{equation}\label{eq:sppscosinesine}
			C_f(\rho,x)= \sum_{k=0}^{\infty}\frac{(-1)^k\rho^{2k}\varphi_f^{(2k)}(x)}{(2k)!}, \quad S_f(\rho,x)= \sum_{k=0}^{\infty}\frac{(-1)^k\rho^{2k}\varphi_f^{(2k+1)}(x)}{(2k+1)!}.
		\end{equation}
		Each series converges with respect to the variable $x$ in the norm of $W^{1,2}(J)$. The corresponding series of second derivatives converges in $W^{-1,2}(J)$. Moreover, both series converge absolutely and uniformly with respect to the parameter $\rho$ on every compact subset of the complex plane.
	\end{corollary}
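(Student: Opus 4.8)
The plan is to obtain \eqref{eq:sppscosinesine} directly from the SPPS representation \eqref{eq:SPPS} of $e_f(\rho,x)$ and the linear relations \eqref{eq:sineasexponential}, without any new estimates: since $C_f$ and $S_f$ are assembled from $e_f(\rho,\cdot)$ and $e_f(-\rho,\cdot)$, all the required convergence properties are already contained in the preceding theorem.

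First I would recall from the proof of that theorem the splitting \eqref{eq:auxiliarseries1}, which I write as $e_f(\rho,x)=A(\rho,x)+i\rho\,B(\rho,x)$ with $A(\rho,x)=\sum_{k=0}^{\infty}(-1)^k\rho^{2k}f\tilde{X}^{(2k)}$ and $B(\rho,x)=\sum_{k=0}^{\infty}(-1)^k\rho^{2k}fX^{(2k+1)}$. It was established there that each of $A$ and $B$ converges with respect to $x$ in the norm of $W^{1,2}(J)$, belongs to $\mathscr{D}_2(\mathbf{S}_f)$ with distributional second-derivative series converging in $W^{-1,2}(J)$, and converges absolutely and uniformly in $\rho$ on compact subsets of $\mathbb{C}$. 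Combining this with the identities $\varphi_f^{(2k)}=(2k)!\,f\tilde{X}^{(2k)}$ and $\varphi_f^{(2k+1)}=(2k+1)!\,fX^{(2k+1)}$ established in that same proof shows that the two series in \eqref{eq:sppscosinesine} are nothing but $A(\rho,x)$ and $B(\rho,x)$, so all the convergence claims follow at once.

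It then remains to identify $A$ with $C_f$ and $B$ with $S_f$. Since $A$ and $B$ depend on $\rho$ only through $\rho^2$, replacing $\rho$ by $-\rho$ yields $e_f(-\rho,x)=A(\rho,x)-i\rho\,B(\rho,x)$, and therefore, by \eqref{eq:sineasexponential},
\[
\frac{e_f(\rho,x)+e_f(-\rho,x)}{2}=A(\rho,x)=C_f(\rho,x),\qquad \frac{e_f(\rho,x)-e_f(-\rho,x)}{2i\rho}=B(\rho,x)=S_f(\rho,x),
\]
which is exactly \eqref{eq:sppscosinesine}. Alternatively, one may split the series \eqref{eq:SPPS} itself into its even- and odd-indexed subseries --- legitimate because that series converges absolutely in the $W^{1,2}(J)$-norm, uniformly in $\rho$ on compacts --- and use $(i\rho)^{2k}=(-1)^k\rho^{2k}$ and $(i\rho)^{2k+1}=i(-1)^k\rho^{2k+1}$. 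There is no genuine obstacle here: the only point meriting a word of care is this regrouping of terms, which the absolute convergence already justifies, together with the (routine) verification, via uniqueness of the Cauchy problem and \eqref{eq:sineasexponential}, that the right-hand sides of \eqref{eq:sppscosinesine} satisfy the initial conditions \eqref{eq:cosineinitial}--\eqref{eq:sineinitial}.
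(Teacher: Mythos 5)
Your proposal is correct and follows essentially the same route as the paper, which simply states that the corollary follows from the relations \eqref{eq:sineasexponential} together with the SPPS representation \eqref{eq:SPPS} and calls the proof straightforward. Your identification of the even and odd parts of \eqref{eq:auxiliarseries1} with $C_f$ and $S_f$, and the observation that all convergence claims were already established for those two subseries in the proof of the preceding theorem, is exactly the intended argument, just written out in full.
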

	
	\begin{remark}\label{eq:remarkevenfunctionsinrho}
		From the SPPS series \eqref{eq:sppscosinesine}, we observe that, for $x\in J$ fixed, $C_f(\rho,x)$ and $S_f(\rho,x)$ are even functions in the parameter $\rho$. Moreover, both solutions are entire functions in the spectral parameter $\lambda=\rho^2$.
	\end{remark}

	\section{The Darboux transformation}
	Let us consider the operator $\mathbf{S}_{\frac{1}{f}}: \mathscr{D}_2(\mathbf{S}_{\frac{1}{f}}) \rightarrow L^2(J)$ with domain $\mathscr{D}_2(\mathbf{S}_{\frac{1}{f}}):=\{y\in W^{1,2}(J)\,|\, \mathbf{D}_{\frac{1}{f}}y\in W^{1,2}(J)\}$ and whose action is given by
	\[
	\mathbf{S}_{\frac{1}{f}}y:= -\mathbf{D}_f\mathbf{D}_{\frac{1}{f}}y\qquad \text{for   } y\in \mathscr{D}_2(\mathbf{S}_{\frac{1}{f}}).
	\]
	Note that $\frac{1}{f}$ is a non-vanishing solution of the equation $\mathbf{S}_{\frac{1}{f}}y=0$ satisfying normalization $\frac{1}{f(x_0)}=1$. Hence $\mathbf{S}_{\frac{1}{f}}$ is precisely the Schr\"odinger operator with potential $q_{\frac{1}{f}}=f\left(\frac{1}{f}\right)''$.  In the distributional sense,
	\begin{equation}\label{eq:Darbouxtransformpotencial}
		q_{\frac{1}{f}}=2\left(\frac{f'}{f}\right)^2-q_f.
	\end{equation}
	Indeed, note that
	\begin{align*}
		\left(f\left(\frac{1}{f}\right)''\bigg{|}\phi\right)_{W^{1,2}_0(J)}&=\int_J\frac{f'}{f^2}(f\phi'+f'\phi)=\int_J \left\{ \frac{f'}{f}\phi'+\left(\frac{f'}{f}\right)^2\phi\right\}\\
		&=\int_J \left[2\left(\frac{f'}{f}\right)^2\phi +\sigma_f\phi'\right]=\left( 2\left(\frac{f'}{f}\right)^2-q_f\bigg{|}\phi\right)_{W_0^{1,2}(J)}.
	\end{align*}
	\begin{definition} 
		The potential $q_{\frac{1}{f}}$ is called the {\bf Darboux transformation} of $q_f$, and $\mathbf{S}_{\frac{1}{f}}$ is the {\bf Darboux operator} associated with $\mathbf{S}_f$.
	\end{definition} 
	
	Let $y\in \mathscr{D}_2(\mathbf{S}_f)$ be a solution of $\mathbf{S}_fy=\lambda y$. From the factorization $\mathbf{S}_f=-\mathbf{D}_{\frac{1}{f}}\mathbf{D}_f$, we consider the function
	\begin{equation}\label{eq:Darbouxtransform}
		v:= \mathbf{D}_fy.
	\end{equation}
	Note that $\mathbf{D}_{\frac{1}{f}}v=-\mathbf{S}_fy=-\lambda y\in \mathscr{D}_2(\mathbf{S}_f)$. Hence $$\mathbf{S}_{\frac{1}{f}}v=-\mathbf{D}_f\mathbf{D}_{\frac{1}{f}}v=\lambda \mathbf{D}_fy=\lambda v.$$
	Therefore, the operator $\mathbf{D}_f$ transforms solutions of the equation $\mathbf{S}_fy=\lambda y$ into solutions of $\mathbf{S}_{\frac{1}{f}}v=\lambda v$.  Note that $v(x_0)=\mathbf{D}_fy(x_0)$ and $\mathbf{D}_{\frac{1}{f}}v(x_0)=-\lambda y(x_0)$. In the particular case when $y$ is the solution $e_f(\rho,x)$, its Darboux transform $v$ satisfies the initial conditions $v(x_0)=i\rho$ and $\mathbf{D}_fv(x_0)=-\lambda=(i\rho)^2$. By the uniqueness of the Cauchy problem, we have the relation
	\begin{equation}\label{eq:Darbouxtransforme}
		\mathbf{D}_fe_f(\rho,x)=i\rho e_{\frac{1}{f}}(\rho,x).
	\end{equation}
	
	Similarly, we deduce the relations
	\begin{equation}\label{eq:Darbouxtransformsinecosine}
		\mathbf{D}_fC_f(\rho,x)=-\lambda S_{\frac{1}{f}}(\rho,x), \qquad 	\mathbf{D}_fS_f(\rho,x)=C_{\frac{1}{f}}(\rho,x).
	\end{equation}

	\begin{proposition}\label{Prop:Darbouxformalpowers}
		The formal powers satisfy the following relations:
		\begin{align}
			\mathbf{D}_f\varphi_f^{(0)}&=0, \nonumber \\
			\mathbf{D}_f\varphi_f^{(k)}&=k\varphi_{\frac{1}{f}}^{(k-1)} \text{ for all } k\in \mathbb{N}. \label{eq:darbouxformal}
		\end{align}
	\end{proposition}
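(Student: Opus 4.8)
The plan is to prove the identity by induction on $k$ in steps of two, with base cases $k=0$ and $k=1$, using the defining recursion $\varphi_f^{(k)}=k(k-1)\mathbf{R}_f\varphi_f^{(k-2)}$, the explicit form \eqref{eq:operatorR} of $\mathbf{R}_f$, and the elementary identity $\tfrac{1}{f}\mathbf{D}_fy=D\!\left(\tfrac{y}{f}\right)$ for $y\in W^{1,1}(J)$, which is immediate from $\mathbf{D}_fy=fD(y/f)$ in \eqref{eq:operatorDf}. For $k=0$ one has $\mathbf{D}_f\varphi_f^{(0)}=\mathbf{D}_ff=f'-\tfrac{f'}{f}f=0$. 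For $k=1$, since $\varphi_f^{(1)}=f_1=f\int_{x_0}^{x}f^{-2}$ we get $\tfrac{\varphi_f^{(1)}}{f}=\int_{x_0}^{x}f^{-2}$, hence $\mathbf{D}_f\varphi_f^{(1)}=fD\!\left(\tfrac{\varphi_f^{(1)}}{f}\right)=f\cdot f^{-2}=\tfrac1f=\varphi_{1/f}^{(0)}$, which is $k\varphi_{1/f}^{(k-1)}$ for $k=1$.

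For the inductive step, let $k\geq2$. From $\varphi_f^{(k)}=k(k-1)\mathbf{R}_f\varphi_f^{(k-2)}$ and \eqref{eq:operatorR}, $\tfrac{\varphi_f^{(k)}}{f}=k(k-1)\int_{x_0}^{x}f^{-2}(t)\bigl[\int_{x_0}^{t}f\varphi_f^{(k-2)}\bigr]dt$, so
\[
\mathbf{D}_f\varphi_f^{(k)}=fD\!\left(\tfrac{\varphi_f^{(k)}}{f}\right)=\frac{k(k-1)}{f(x)}\int_{x_0}^{x}f(t)\varphi_f^{(k-2)}(t)\,dt .
\]
For $k=2$ this gives $\mathbf{D}_f\varphi_f^{(2)}=\tfrac{2}{f}\int_{x_0}^{x}f^{2}=2\varphi_{1/f}^{(1)}$, since the Abel solution associated with the impedance $1/f$ is $(1/f)_1=\tfrac1f\int_{x_0}^{x}f^{2}$. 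For $k\geq3$, write $f\varphi_f^{(k-2)}=f^{2}\cdot\tfrac{\varphi_f^{(k-2)}}{f}$ and note that, because $\varphi_f^{(k-2)}(x_0)=0$ (the formal powers of index $\geq1$ vanish at $x_0$), the identity $\tfrac1f\mathbf{D}_f=D(\,\cdot\,/f)$ yields $\tfrac{\varphi_f^{(k-2)}}{f}=\int_{x_0}^{x}\tfrac1f\mathbf{D}_f\varphi_f^{(k-2)}=(k-2)\int_{x_0}^{x}\tfrac1f\varphi_{1/f}^{(k-3)}$ by the inductive hypothesis. Substituting and recalling that $\mathbf{R}_{1/f}h(x)=\tfrac{1}{f(x)}\int_{x_0}^{x}f^{2}(t)\bigl[\int_{x_0}^{t}\tfrac1f h\bigr]dt$ (the operator \eqref{eq:operatorR} built from the impedance $1/f$, whose square is $1/f^{2}$), we obtain $\mathbf{D}_f\varphi_f^{(k)}=k(k-1)(k-2)\,\mathbf{R}_{1/f}\varphi_{1/f}^{(k-3)}=k\,\varphi_{1/f}^{(k-1)}$, the last equality by the recursion $\varphi_{1/f}^{(k-1)}=(k-1)(k-2)\mathbf{R}_{1/f}\varphi_{1/f}^{(k-3)}$.

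All steps are elementary manipulations of iterated integrals; the only genuine care is the bookkeeping of indices at the base, the separate treatment of $k=2$, and the observation that $\varphi_f^{(j)}(x_0)=0$ for $j\geq1$ is precisely what removes the boundary term in the antiderivative identity for $\varphi_f^{(k-2)}/f$. Alternatively, one may avoid the induction by using the representation $\varphi_f^{(k)}=k!\,f\,\tilde X^{(k)}$ for $k$ even and $\varphi_f^{(k)}=k!\,f\,X^{(k)}$ for $k$ odd from the proof of the SPPS theorem: replacing $f$ by $1/f$ interchanges the roles of $X^{(k)}$ and $\tilde X^{(k)}$, and differentiating via the defining recursions of $X^{(k)},\tilde X^{(k)}$ yields the claim in one line.
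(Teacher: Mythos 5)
Your proof is correct, but it takes a genuinely different route from the paper. The paper deduces \eqref{eq:darbouxformal} from the already-established Darboux relation $\mathbf{D}_fe_f(\rho,x)=i\rho\, e_{\frac{1}{f}}(\rho,x)$ (itself a consequence of uniqueness for the Cauchy problem): it applies $\mathbf{D}_f$ termwise to the SPPS series of $e_f$, uses the $W^{1,2}$-convergence to justify the interchange, and matches Taylor coefficients in $i\rho$ against the SPPS series of $e_{\frac{1}{f}}$. You instead run a direct two-step induction on the recursive integral definition $\varphi_f^{(k)}=k(k-1)\mathbf{R}_f\varphi_f^{(k-2)}$, with the elementary identity $\tfrac{1}{f}\mathbf{D}_f=D(\cdot/f)$ and the vanishing $\varphi_f^{(j)}(x_0)=0$ for $j\ge 1$ doing the work; your separate treatment of $k=2$ (where $\varphi_f^{(0)}(x_0)=1\neq 0$) is exactly the right bookkeeping, and the reduction to $\mathbf{R}_{1/f}\varphi_{1/f}^{(k-3)}$ closes the induction correctly. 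What each approach buys: the paper's argument is shorter given the machinery already in place, but it leans on the convergence of two infinite series and on relation \eqref{eq:Darbouxtransforme}; yours is self-contained and purely algebraic on iterated integrals, needs no limiting process, and would in fact let one \emph{derive} \eqref{eq:Darbouxtransforme} from the proposition rather than the reverse. Your closing remark about exchanging $X^{(k)}$ and $\tilde X^{(k)}$ under $f\mapsto 1/f$ is also valid and is the closest in spirit to the computations in the paper's SPPS proof. One cosmetic point: your base case correctly gives $\mathbf{D}_f\varphi_f^{(1)}=\tfrac{1}{f}=\varphi_{\frac{1}{f}}^{(0)}$, whereas the paper's proof contains a typo labelling this as $\varphi_{\frac{1}{f}}^{(1)}$.
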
	
	
	\begin{proof}
		Equalities $\mathbf{D}_f\varphi_f^{(0)}=0$ and $\mathbf{D}_f\varphi_f^{(1)}=\frac{1}{f}=\varphi_{\frac{1}{f}}^{(1)}$ follows from the definitions. In order to obtain \eqref{eq:darbouxformal} for $k\geq 2$, we consider the SPPS representation of $e_f(\rho,x)$ and $e_{\frac{1}{f}}(\rho,x)$ together with the relation \eqref{eq:Darbouxtransforme} to obtain:
		\[
		\sum_{k=0}^{\infty}\frac{(i\rho)^{k+1}\varphi_{\frac{1}{f}}^{(k)}}{k!}= i\rho e_{\frac{1}{f}}(\rho,x)=\mathbf{D}_fe_f(\rho,x)=\sum_{k=1}^{\infty}\frac{(i\rho)^k\mathbf{D}_f\varphi_{f}^{(k)}}{k!}.
		\]
		The exchange of the series with the operator $\mathbf{D}_f$ is due to the convergence in $W^{1,2}(J)$. Since $\mathbf{D}_f\varphi_f^{(1)}=\varphi_{\frac{1}{f}}^{(0)}$, reordering indices we obtain
		\[
		\sum_{k=2}^{\infty}\frac{(i\rho)^k\mathbf{D}_f\varphi_{f}^{(k)}}{k!}=\sum_{k=2}^{\infty}\frac{(i\rho)^{k}\varphi_{\frac{1}{f}}^{(k-1)}}{(k-1)!},
		\]
		and comparing as Taylor series in $i\rho$ we conclude \eqref{eq:Darbouxtransformpotencial}.
	\end{proof}

	\section{Integral transmutation operators}
		Through this section, we focus on the case when $J=(0,\ell)$ and $x_0=0$. Let us denote $\mathcal{T}_{\ell}:=\{(x,t)\in \mathbb{R}^2\, |\, 0\leq x\leq \ell, |t|\leq x\}$ (see Figure \ref{fig:triangulos}).
	\begin{theorem}\label{Th:integralrepresentation1}
		There exists a function $K_f\in L^2(\mathcal{T}_{\ell})$ such that
		\begin{equation}\label{eq:exponentialintegral}
			e_f(\rho,x)=e^{i\rho x}+\int_{-x}^xK_f(x,t)e^{i\rho t}dt,\quad 0<x<\ell.
		\end{equation}
	\end{theorem}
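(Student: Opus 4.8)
The natural but painful route would be to derive directly the hyperbolic Goursat problem satisfied by $K_f$ and solve it by successive approximations; since $q$ is a genuine $W^{-1,2}$ distribution, that PDE would have distributional coefficients. Instead the plan is to exploit the Polya factorization to pass to a first-order \emph{Dirac}-type system whose coefficient is merely an $L^2$ function, for which integral transmutation representations are classical, and then to read \eqref{eq:exponentialintegral} off a single component. Put $p:=\frac{f'}{f}$; since $f'\in L^2(0,\ell)$ and $\frac1f\in W^{1,2}(0,\ell)\hookrightarrow C[0,\ell]$, one has $p\in L^2(0,\ell)$. Consider $\mathbf{u}_f(\rho,\cdot):=\bigl(e_f(\rho,\cdot),\,e_{\frac{1}{f}}(\rho,\cdot)\bigr)$. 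By \eqref{eq:Darbouxtransforme}, $\mathbf{D}_fe_f=i\rho\,e_{\frac1f}$, and by the same argument (using $\mathbf{S}_{\frac1f}=-\mathbf{D}_f\mathbf{D}_{\frac1f}$ together with $\mathbf{S}_f=-\mathbf{D}_{\frac1f}\mathbf{D}_f$ from Theorem~\ref{th:factorizacion}(v) and uniqueness of the Cauchy problem) one obtains the companion relation $\mathbf{D}_{\frac1f}e_{\frac1f}=i\rho\,e_f$. Unwinding the definitions \eqref{eq:operatorDf} of $\mathbf{D}_f$ and $\mathbf{D}_{\frac1f}$, this shows that $e_f(\rho,\cdot),e_{\frac1f}(\rho,\cdot)\in W^{1,2}(0,\ell)$ solve
\begin{equation}\label{eq:uf-dirac}
	\mathbf{u}_f'=\begin{pmatrix} p & i\rho\\ i\rho & -p\end{pmatrix}\mathbf{u}_f \ \text{ on }(0,\ell),\qquad \mathbf{u}_f(\rho,0)=\begin{pmatrix}1\\ 1\end{pmatrix},\qquad \rho\in\mathbb{C}
\end{equation}
(at $\rho=0$ this is solved by $(f,1/f)$). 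This is the decisive point: the Polya factorization replaces the genuinely distributional potential $q_f$ by the $L^2$ coefficient $p$ in a system \emph{linear} in $\rho$, so the term $\sigma_f^2\in L^1$ of the standard regularization never enters.

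Now diagonalize the constant anti-diagonal matrix. With $P:=\frac{1}{\sqrt2}\begin{pmatrix}1&1\\1&-1\end{pmatrix}$ (so $P=P^{-1}$) and $\mathbf{z}_f:=P\mathbf{u}_f$, the system \eqref{eq:uf-dirac} becomes the canonical Dirac system
\begin{equation}\label{eq:zf-dirac}
	\mathbf{z}_f'=\left[\,i\rho\begin{pmatrix}1&0\\0&-1\end{pmatrix}+p\begin{pmatrix}0&1\\1&0\end{pmatrix}\right]\mathbf{z}_f,\qquad \mathbf{z}_f(\rho,0)=\begin{pmatrix}\sqrt2\\ 0\end{pmatrix},
\end{equation}
whose potential matrix is off-diagonal with $L^2(0,\ell)$ entries. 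For Dirac systems of this type the existence of an integral transmutation representation on the characteristic triangle $\mathcal{T}_\ell$ is precisely the content of the results in \cite{haratumyant}; applying them (after the innocuous rescaling of the initial vector) yields a matrix kernel $\mathbb{K}=(\mathbb{K}_{ij})_{i,j=1}^2$ with $\mathbb{K}_{ij}\in L^2(\mathcal{T}_\ell)$ such that
\begin{equation}\label{eq:zf-rep}
	\mathbf{z}_f(\rho,x)=e^{i\rho x}\begin{pmatrix}\sqrt2\\ 0\end{pmatrix}+\int_{-x}^{x}\mathbb{K}(x,t)\,e^{i\rho t}\begin{pmatrix}\sqrt2\\ 0\end{pmatrix}\,dt,\qquad 0<x<\ell.
\end{equation}
Only the exponential $e^{i\rho t}$ enters here because the initial vector $(\sqrt2,0)$ is the $(+1)$-eigenvector of $\operatorname{diag}(1,-1)$, so the solution of the unperturbed system is $\sqrt2\,e^{i\rho x}$ in the first slot and $0$ in the second.

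It remains to undo the substitution. Since $\mathbf{u}_f=P\mathbf{z}_f$ and $P=P^{-1}$, the first component of $\mathbf{u}_f$ equals $e_f=\frac{1}{\sqrt2}\bigl((\mathbf{z}_f)_1+(\mathbf{z}_f)_2\bigr)$; substituting \eqref{eq:zf-rep} and cancelling the constants $\sqrt2$ gives exactly \eqref{eq:exponentialintegral} with
\[
	K_f(x,t):=\mathbb{K}_{11}(x,t)+\mathbb{K}_{21}(x,t)\in L^2(\mathcal{T}_\ell).
\]
As $\rho\in\mathbb{C}$ was arbitrary, \eqref{eq:exponentialintegral} holds for all $\rho$, consistent with $e_f(\rho,\cdot)$ being entire in $\rho$ by the SPPS series \eqref{eq:SPPS}. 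The main obstacle is the middle step: one has to verify that $p=f'/f$ and the data in \eqref{eq:zf-dirac} meet the hypotheses of the transmutation theorem for Dirac systems in \cite{haratumyant}, and that the kernel it produces has exactly the asserted $L^2(\mathcal{T}_\ell)$ regularity — and no better in this distributional regime, since the trace of $K_f$ on the diagonal must encode the $L^2$ function $\sigma_f$, which rules out continuity. One should also confirm that the derivation of \eqref{eq:uf-dirac}, which invoked uniqueness of the Cauchy problem away from $\rho=0$, survives at $\rho=0$; this and the bookkeeping above are routine given Sections~3 and~5, but that is where the genuine work lies.
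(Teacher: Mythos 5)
Your proposal is correct in substance and rests on the same key external input as the paper — the integral transmutation representation of \cite{haratumyant} for first-order Dirac-type systems with an $L^2$ potential matrix, made available here by the Polya factorization which replaces $q_f$ by the $L^2$ coefficient $f'/f$ — but the intermediate reduction is genuinely different. The paper works with the vector $(-\rho y,\mathbf{D}_f y)$, applies \cite{haratumyant} twice (once to the system with $Q_f$ for $y=S_f$, once to the system with $Q_{1/f}$ for $y=S_{1/f}$, using the Darboux relation \eqref{eq:Darbouxtransformsinecosine} to reach $C_f$), isolates the relevant kernel entries by a parity argument in $\rho$, and then recombines via $e_f=C_f+i\rho S_f$. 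You instead package $e_f$ and its Darboux transform $e_{1/f}$ into a single system using \eqref{eq:Darbouxtransforme} and its companion $\mathbf{D}_{\frac1f}e_{\frac1f}=i\rho e_f$ (which you justify correctly), diagonalize the $\rho$-part by a constant unitary, and read off $e_f$ from one application of the representation; this eliminates both the parity argument and the separate treatment of $C_f$ and $S_f$, at the price of one extra constant conjugation. The one point you flag as outstanding is indeed the only delicate one: your canonical system is not literally in the form $BU'+QU=\rho U$ of \eqref{eq:Dirac2}, and the constant unitary bringing it there must be chosen so that the transformed potential is the \emph{off-diagonal} matrix $\frac{f'}{f}\left(\begin{smallmatrix}0&1\\1&0\end{smallmatrix}\right)$ (the cyclic Pauli rotation does this, landing exactly on the paper's $Q_f$) rather than a diagonal one, which a generic choice of conjugation would produce and which would fall outside the hypotheses of \cite{haratumyant}. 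Once that conjugation is fixed, your kernel $\mathbb{K}_{11}+\mathbb{K}_{21}$ is a fixed linear combination of the entries of the paper's $G^f$, so the $L^2(\mathcal{T}_\ell)$ bound follows from the same estimate \eqref{eq:estimateintegralK}; filling in that computation would make the argument complete.
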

	\begin{proof}
		Let $y\in \mathscr{D}_2(\mathbf{S}_f)$ be a solution of $\mathbf{S}_fy=\lambda y$. Define $v_1=-\rho y$ and $v_2=\mathbf{D}_f y$. Using the Polya factorization \eqref{eq:Polyafact} together with \eqref{eq:operatorDf}, a direct computation shows that $(u_1,u_2)^T$ is a solution of the Dirac-type system
		\begin{equation}\label{eq:dirac1}
			\begin{pmatrix}
				0 & 1\\
				-1 & 0
			\end{pmatrix}\begin{pmatrix}
				v_1'\\
				v_2'
			\end{pmatrix}+\begin{pmatrix}
				0 & \frac{f'}{f}\\
				\frac{f'}{f} & 0
			\end{pmatrix} \begin{pmatrix}
				v_1\\
				v_2
			\end{pmatrix}=\rho \begin{pmatrix}
				v_1\\
				v_2
			\end{pmatrix},\quad \begin{pmatrix}
				v_1(0)\\
				v_2(0)
			\end{pmatrix}= \begin{pmatrix}
				-\rho y(0)\\
				\mathbf{D}_fy(0)
			\end{pmatrix}.
		\end{equation}

		We consider the general matrix system
		\begin{equation}\label{eq:Dirac2}
			BU'(x)+Q_f(x)U(x)=\rho U(x),\quad 0<x<\ell,
		\end{equation}
		where
		\[
		B=\begin{pmatrix}
			0&-1\\
			1&0
		\end{pmatrix} \quad \text{and}\quad Q_f=\begin{pmatrix}
			0 & \frac{f'}{f}\\
			\frac{f'}{f} & 0
		\end{pmatrix}.
		\]
		In this case, $Q_f\in L^2((0,\ell); \mathbb{C}^{2\times 2})$, meaning that $(Q_f)_{i,j}\in L^2(0,\ell)$ for $i,j=1,2$. According to \cite[Ch. 2, Theorems 2.1 and 2.3]{haratumyant}, there exists a matrix function $G^f(x,t)$ defined on $\mathcal{T}_{\ell}$ with the property that for $x\in (0,\ell]$ fixed, the function $G^f(x,\cdot)\in L^2((-x,x);\mathbb{C}^2)$ and the unique matrix solution of \eqref{eq:Dirac2} $U(\rho,x)$ satisfying the initial condition $U(\rho,0)=I_{2\times 2}$, admits the integral representation
		\begin{equation}
			U(\rho,x)=e^{-B\rho x}+\int_{-x}^{x}G^f(x,t)e^{-B\rho t}dt,
		\end{equation}
		where $e^{-B\rho x}=\begin{pmatrix}
			\cos(\rho x) & -\sin(\rho x) \\
			\sin(\rho x) & \cos(\rho x)
		\end{pmatrix}$.
		
		Consider the case when $y$ is the solution $S_f(\rho,x)$. Then $(v_1,v_2)^T=(-\rho S_f(\rho,x),\mathbf{D}_fS_f(\rho,x))^T$ is the unique solution of \eqref{eq:dirac1} satisfying the initial condition $(v_1(0),v_2(0))^T=(0,1)^T$. By the theory of linear systems, we have
		\begin{equation}\label{eq:auxiliar0}
			\begin{pmatrix}
				v_1\\
				v_2
			\end{pmatrix}= U\begin{pmatrix}
				0\\
				1
			\end{pmatrix}= \begin{pmatrix}
				-\sin(\rho x)\\
				\cos(\rho x)
			\end{pmatrix}+\int_{-x}^{x}G^f(x,t)\begin{pmatrix}
				-\sin(\rho t)\\
				\cos(\rho t)
			\end{pmatrix}dt.
		\end{equation}
		Consequently,
		\[
		-\rho S_f(\rho,x)= u_{1,2}(\rho,x)= -\sin(\rho x)+\int_{-x}^{x}\left(G_{1,2}^f(x,t)\cos(\rho t)-G_{1,1}^f(x,t)\sin(\rho t)\right)dt.
		\]
		Thus,
		\begin{equation}\label{eq:auxiliar1}
			\rho S_f(\rho,x)-\sin(\rho x)-\int_{-x}^{x}G_{1,1}^f(x,t)\sin(\rho t)dt=-\int_{-x}^xG_{1,2}^f(x,t)\cos(\rho t)dt.
		\end{equation}
		By Remark \ref{Remark:initialconditionsunitaryoperator}, the left-hand side of \eqref{eq:auxiliar1} is an odd function of $\rho$, whereas the right-hand side is even. Hence, both sides must equal zero, and we conclude that
		\begin{equation}\label{eq:sineintegral}
			S_f(\rho,x)=\frac{\sin(\rho x)}{\rho}+\int_{-x}^{x}G_{1,1}^f(x,t)\frac{\sin(\rho t)}{\rho}dt.
		\end{equation}
		
		Now, consider the solution $C_f(\rho,x)$. By \eqref{eq:Darbouxtransformsinecosine}, we have $C_f(\rho,x)=\mathbf{D}_{\frac{1}{f}}S_{\frac{1}{f}}(\rho,x)$. Applying the same procedure as before, but now for the system with $Q_{\frac{1}{f}}$, we obtain the corresponding kernel $G^{\frac{1}{f}}$. Setting $(v_1,v_2)^T=(-\rho S_{\frac{1}{f}}, \mathbf{D}_{\frac{1}{f}}S_{\frac{1}{f}})^T$ as the solution of the system with initial conditions $(0,1)^T$,and using \eqref{eq:auxiliar0} we get
		\[
		C_f(\rho,x)=\cos(\rho x)+\int_{-x}^{x}\left(G_{2,2}^{\frac{1}{f}}(x,t)\cos(\rho t)-G_{2,1}^{\frac{1}{f}}\sin(\rho t)\right)dt.
		\]
		By Remark \ref{Remark:initialconditionsunitaryoperator}, $C_f(\rho,x)$ is an even function of $\rho$, and using the parity we conclude that
		\begin{equation}\label{eq:cosineintegral}
			C_f(\rho,x)=\cos(\rho x)+\int_{-x}^{x}G_{2,2}^{\frac{1}{f}}(x,t)\cos(\rho t)dt.
		\end{equation}
		Finally, from \eqref{eq:exponentialtrigonometric}, we get
		\begin{align*}
			e_f(\rho,x)&=e^{i\rho x}+\int_{-x}^{x}G_{2,2}^{\frac{1}{f}}(x,t)\cos(\rho t)dt+\int_{-x}^{x}G_{1,1}^f(x,t)i\sin(\rho t)dt\\
			&= e^{i\rho x}+\int_{-x}^{x}G_{2,2}^{\frac{1}{f}}(x,t)\left(\frac{e^{i\rho t}+e^{-i\rho t}}{2}\right)dt+\int_{-x}^{x}G_{1,1}^f(x,t)\left(\frac{e^{i\rho t}-e^{-i\rho t}}{2}\right)dt\\
			&=e^{i\rho x}+\int_{-x}^{x}\frac{G_{2,2}^{\frac{1}{f}}(x,t)+G_{1,1}^f(x,t)}{2}e^{i\rho t}dt+\int_{-x}^{x}\frac{G_{2,2}^{\frac{1}{f}}(x,t)-G_{1,1}^f(x,t)}{2}e^{-i\rho t}dt
		\end{align*}

		Applying the change of variables $t\mapsto -t$ in the second integral, we conclude that
		
		\[
		e_f(\rho,x)= e^{i\rho x}+\int_{-x}^{x}\left\{\frac{G_{2,2}^{\frac{1}{f}}(x,t)+G_{2,2}^{\frac{1}{f}}(x,-t)}{2}+\frac{G_{1,1}^f(x,t)-G_{1,1}^f(x,-t)}{2}\right\}e^{i\rho t}dt.
		\]
		
		If we denote by $P^{\pm}_tg(t):=\frac{g(t)\pm g(-t)}{2}$ the even and odd projections in $L^2(-x,x)$, we obtain \eqref{eq:exponentialintegral} with
		\begin{equation}\label{eq:kernelexponential}
			K_f(x,t):=P_t^+G_{2,2}^{\frac{1}{f}}(x,t)+P_t^{-}G_{1,1}^f(x,t).
		\end{equation}
		Finally, if we denote $d(x)=\int_0^x\left|\frac{f'(t)}{f(t)}\right|^2dt$, then according to \cite[p. 40, Eq. (2. 44)]{haratumyant}, the following estimates hold:
		\begin{equation*}
			\int_{-x}^{x}|G_{i,i}^{f^{(-1)^j}}(x,t)|^2dt\leq d(x)+2d^2(x)[x^2d(x)+x]e^{xd(x)},\quad 0<x<\ell, i=1,2, j=0,1.
		\end{equation*}
		Note that the right-hand side is uniformly bounded in $x$ by $d_f+2d_f^2(\ell^2d_f+\ell)e^{\ell d_f}$, where $d_f=\left\| \frac{f'}{f}\right\|_{L^2(0.\ell)}^2$. Consequently,
		\begin{equation}\label{eq:estimateintegralK}
			\iint_{\mathcal{T}_{\ell}}|G_{i,i}^{f^{(-1)^j}}|^2 = \int_0^{\ell}dx\int_{-x}^{x}|G_{i,i}^{f^{(-1)^j}}(x,t)|^2dt\leq \ell (d_f+2d_f^2(\ell^2d_f+\ell)e^{\ell d_f}).
		\end{equation}
		Thus, $G_{2,2}^{\frac{1}{f}}, G_{2,2}^f\in L^2(\mathcal{T}_{\ell})$, and consequently, $K_f\in L^2(\mathcal{T}_{\ell})$, as desired.
	\end{proof}
	\newline

	Consider the operator  defined by
	\[
	\mathbf{T}_fy(x)=y(x)+\int_{-x}^{x}K_f(x,t)y(t)dt, \quad y\in L^2(-\ell,\ell).
	\]
	Since $K_f\in L^2(\mathcal{T}_{\ell})$, it follows that $\mathbf{T}_f\in \mathcal{B}(L^2(-\ell,\ell), L^2(0,\ell))$. We then obtain the relation
	\begin{equation}\label{eq:transmutationexponential}
		e_f(\rho,x)=\mathbf{T}_f[e^{i\rho x}],
	\end{equation}
	and from the equalities \eqref{eq:sineasexponential}, we deduce
	\begin{equation}\label{eq:transmutationcosinesine}
		C_f(\rho,x)=\mathbf{T}_f[\cos(\rho x)], \quad S_f(\rho,x)=\mathbf{T}_f\left[\frac{\sin(\rho t)}{\rho}\right].
	\end{equation}
	Moreover, using the parity in the variable $t$ of $\cos(\rho t)$ and $\frac{\sin(\rho t)}{\rho}$ together with the definition \eqref{eq:kernelexponential} of $K_f(x,t)$, one can recover the integral representations \eqref{eq:sineintegral} and \eqref{eq:cosineintegral}.
	
	\begin{remark}
		In the case when $q_f\in L^2(0,\ell)$, it is known that the kernel $K_f$ satisfies the Goursat conditions
		\[
		K_f(x,x)=\frac{f'(0)}{2}+\frac{1}{2}\int_0^xq_f(s)ds,\quad K_f(x,x)=\frac{f'(0)}{2}
		\]
		Note that $2K_f(x,x)$ is an antiderivative of $q$ satisfying the initial condition $2K(0,0)=f'(0)$. Comparing with the definition \eqref{eq:sigmaf}, we conclude that
		\begin{equation}\label{eq:traceofK}
			K_f(x,x)=\frac{1}{2}\sigma_f(x).
		\end{equation} 
		In the general case when $q_f\in W^{-1,2}(0,\ell)$ is admissible, we observe that
		\[
		2K_f(x,x)=G_{2,2}^{\frac{1}{f}}(x,x)+G_{2,2}^{\frac{1}{f}}(x,-x)+G_{1,2}^f(x,-x)-G_{1,2}^f(x,-x).
		\]
		According to \cite[p. 33]{haratumyant}, $G^f(x,x)=\frac{1}{2}\left[\frac{f'(x)}{f(x)}\begin{pmatrix}
			1 & 0\\
			0 & -1
		\end{pmatrix}+\int_0^x\left(\frac{f'(s)}{f(s)}\right)^2ds\cdot I_{2\times 2}\right]$. In particular, $G_{1,1}^f(x,x)= \frac{f'(x)}{f(x)}+\int_0^x\left(\frac{f'(s)}{f(s)}\right)^2=G_{2,2}^{\frac{1}{f}}(x,x)$. Hence $$2K_f(x,x)= \sigma_f(x)+G_{2,2}^{\frac{1}{f}}(x,-x)-G_{1,1}^f(x,-x).$$
		For the characteristic $(x,-x)$, it is established in \cite[p. 34]{haratumyant} that, if $f'(0)$ is well defined, then $G^f(x,-x)=\frac{f'(0)}{2}\begin{pmatrix}
			1 & 0\\
			0 & -1
		\end{pmatrix}$, so $G_{1,1}^f(x,x)=\frac{f'(0)}{2}$ and $G_{2,2}^{\frac{1}{f}}(x,-x)=\frac{f'(0)}{2}$. Hence, if $f'(0)$ is well defined, then formula \eqref{eq:traceofK} remains valid.
	\end{remark}
	
	An interesting point is that we can determine the explicit action of the operator $\mathbf{T}_f$ on a dense set.
	\begin{theorem}\label{Th:mappingproperty}
		The following relations hold:
		\begin{equation}\label{eq:mappingprop}
			\mathbf{T}_f[x^k]=\varphi_f^{(k)}(x)\quad \forall k\in \mathbb{N}_0.
		\end{equation}
	\end{theorem}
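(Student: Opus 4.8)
The plan is to exploit the transmutation identity $e_f(\rho,x)=\mathbf{T}_f[e^{i\rho x}]$ from \eqref{eq:transmutationexponential}, together with the SPPS representation \eqref{eq:SPPS} of $e_f(\rho,x)$, and to match the two resulting power series in the spectral parameter $\rho$.

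First I would expand $e^{i\rho x}=\sum_{k=0}^{\infty}\frac{(i\rho)^k}{k!}x^k$ and note that, for each fixed $\rho\in\mathbb{C}$, the partial sums of this series converge uniformly for $x\in[-\ell,\ell]$ (since they are dominated by $\sum_{k\geq N}\frac{(|\rho|\ell)^k}{k!}$, which tends to $0$ independently of $x$), hence they converge in $L^2(-\ell,\ell)$. Because $\mathbf{T}_f\in\mathcal{B}(L^2(-\ell,\ell),L^2(0,\ell))$, it may be applied term by term, giving $\mathbf{T}_f[e^{i\rho x}]=\sum_{k=0}^{\infty}\frac{(i\rho)^k}{k!}\mathbf{T}_f[x^k]$ with convergence in $L^2(0,\ell)$. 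On the other hand, \eqref{eq:transmutationexponential} and the SPPS theorem yield $\mathbf{T}_f[e^{i\rho x}]=e_f(\rho,x)=\sum_{k=0}^{\infty}\frac{(i\rho)^k}{k!}\varphi_f^{(k)}$, where the convergence takes place in $W^{1,2}(J)$ and therefore also in $L^2(0,\ell)$. Thus the two $L^2(0,\ell)$-valued entire functions $\rho\mapsto\sum_{k}\frac{(i\rho)^k}{k!}\mathbf{T}_f[x^k]$ and $\rho\mapsto\sum_{k}\frac{(i\rho)^k}{k!}\varphi_f^{(k)}$ coincide for every $\rho\in\mathbb{C}$.

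It then remains to compare coefficients. By the uniqueness of the Taylor coefficients of a Banach-space-valued holomorphic function (one may extract the $k$-th coefficient via $\frac{1}{2\pi i}\oint_{|\rho|=1}\rho^{-k-1}(\,\cdot\,)\,d\rho$, or simply differentiate $k$ times at $\rho=0$), we obtain $\frac{i^k}{k!}\mathbf{T}_f[x^k]=\frac{i^k}{k!}\varphi_f^{(k)}$ in $L^2(0,\ell)$, which is \eqref{eq:mappingprop}. Since $\varphi_f^{(k)}\in W^{1,2}(J)\hookrightarrow C(\overline{J})$, the identity in fact holds pointwise on $[0,\ell]$. An equivalent route is to run the same argument using the relations $C_f(\rho,x)=\mathbf{T}_f[\cos(\rho x)]$ and $S_f(\rho,x)=\mathbf{T}_f[\rho^{-1}\sin(\rho x)]$ from \eqref{eq:transmutationcosinesine} together with the SPPS expansions \eqref{eq:sppscosinesine}, where comparing even and odd powers of $\rho$ separately gives \eqref{eq:mappingprop} for even and odd $k$, respectively; the parities automatically match because $K_f$ is built from even and odd projections in \eqref{eq:kernelexponential}.

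There is no real obstacle here; the only points demanding care are that the exponential series must be summed in the $L^2(-\ell,\ell)$ topology so that the bounded operator $\mathbf{T}_f$ may legitimately be interchanged with the summation, and that the final comparison of coefficients is the vector-valued uniqueness statement for power series, applied with values in the Hilbert space $L^2(0,\ell)$.
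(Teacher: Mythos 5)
Your argument is correct and is essentially the paper's own proof: both expand $e^{i\rho x}$ in its uniformly convergent power series, pass $\mathbf{T}_f$ through the sum by continuity, and compare the resulting series with the SPPS representation \eqref{eq:SPPS} of $e_f(\rho,x)$ as Taylor series in $i\rho$. Your version merely makes explicit the vector-valued uniqueness-of-coefficients step that the paper leaves implicit.
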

	\begin{proof}
		From the equality \eqref{eq:transmutationexponential}, and since the series $e^{i\rho t}=\sum_{k=0}^{\infty}\frac{(i\rho)^kx^k}{k!}$ converges uniformly on $[-\ell,\ell]$, the continuity of the operator $\mathbf{T}_f$ yields
		\[
		e_f(\rho,x)=\sum_{k=0}^{\infty}\frac{(i\rho)^k\mathbf{T}_f[x^k]}{k!}.
		\]
		Comparing this Taylor series in $i\rho$ with the SPPS representation \eqref{eq:SPPS}, we conclude \eqref{eq:mappingprop}.
	\end{proof}
	\begin{remark}\label{eq:remarkinitialcondition}
		If we consider $u\in C[-\ell,\ell]$, then $(\mathbf{T}_fu)(0)=u(0)$ (the operator preserves the initial condition at $x=0$)
	\end{remark}
	\begin{remark}
		The operator $\mathbf{T}_f: L^2(-\ell,\ell)\rightarrow L^2(0,\ell)$ is surjective.
		
		Indeed, let $y\in L^2(0,\ell)$. Denote by $\tilde{y}$ and $\tilde{K}_f$ the trivial extensions of $y$ and $K_f$ to $(-\ell,\ell)$ and $(-\ell,\ell)^2$, respectively. Since $\tilde{K}_f\in L^2((-\ell,\ell)^2)$, the theory of Volterra integral equations (see, e.g., \cite[Ch. X]{kolmogorov}) ensures that
		\begin{equation*}
			\tilde{y}(x)=u(x)+\int_{-x}^{x}\tilde{K}_f(x,t)u(t)dt,\quad -\ell<x<\ell,
		\end{equation*}
		admits a solution $u\in L^2(-\ell,\ell)$. Taking $0<x<\ell$ in the last equality, we get
		\[
		y(x)=u(x)+\int_{-x}^{x}K(x,t)u(t)dt=\mathbf{T}_fu(x).
		\]
	\end{remark}
	Let $\mathcal{P}[-\ell,\ell]$ be the set of polynomial functions in $[-\ell,\ell]$ (which is dense in $L^2(-\ell,\ell)$ and $C[-\ell,\ell]$).
	\begin{proposition}
		The formal powers $\{\varphi_f^{(k)}\}_{k=0}^{\infty}$ form a complete system in $L^2(0,\ell)$.
	\end{proposition}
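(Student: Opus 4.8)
The plan is to deduce completeness directly from the transmutation mapping property of Theorem~\ref{Th:mappingproperty} together with the surjectivity of $\mathbf{T}_f$ recorded in the preceding remark. First I would note that, by linearity, the image under $\mathbf{T}_f$ of the polynomial space is exactly
\[
\mathbf{T}_f\big(\mathcal{P}[-\ell,\ell]\big)=\operatorname{Span}\{\mathbf{T}_f[x^k]\}_{k=0}^{\infty}=\operatorname{Span}\{\varphi_f^{(k)}\}_{k=0}^{\infty},
\]
since $\mathcal{P}[-\ell,\ell]=\operatorname{Span}\{x^k\}_{k=0}^{\infty}$ and $\mathbf{T}_f[x^k]=\varphi_f^{(k)}$.

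Next I would invoke continuity. Because $\mathbf{T}_f\in\mathcal{B}(L^2(-\ell,\ell),L^2(0,\ell))$ and $\mathcal{P}[-\ell,\ell]$ is dense in $L^2(-\ell,\ell)$, the image $\mathbf{T}_f(\mathcal{P}[-\ell,\ell])$ is dense in the range $\mathbf{T}_f(L^2(-\ell,\ell))$: indeed, for any $g=\mathbf{T}_f u$ in the range, choosing polynomials $p_n\to u$ in $L^2(-\ell,\ell)$ gives $\mathbf{T}_f p_n\to g$ in $L^2(0,\ell)$. Hence $\operatorname{Span}\{\varphi_f^{(k)}\}_{k=0}^{\infty}$ is dense in $\mathbf{T}_f(L^2(-\ell,\ell))$.

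Finally, since $\mathbf{T}_f$ is surjective onto $L^2(0,\ell)$ (by the theory of Volterra integral equations with the $L^2(\mathcal{T}_\ell)$ kernel $K_f$, as in the remark above), we have $\mathbf{T}_f(L^2(-\ell,\ell))=L^2(0,\ell)$. Combining the last two steps, $\operatorname{Span}\{\varphi_f^{(k)}\}_{k=0}^{\infty}$ is dense in $L^2(0,\ell)$, which is the assertion that $\{\varphi_f^{(k)}\}_{k=0}^{\infty}$ is complete.

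I do not anticipate a genuine obstacle here: the only point needing a word of care is that a bounded operator carries a dense set onto a dense subset of its range, which is immediate from sequential continuity; the mapping property and the surjectivity remark supply the rest. (One could alternatively avoid citing surjectivity by working directly with the Volterra structure $\mathbf{T}_f=I+V$ on $L^2(-\ell,\ell)$ and its bounded inverse, but the argument above is the shortest.)
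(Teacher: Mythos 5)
Your argument is correct and coincides with the paper's own proof: both use the surjectivity of $\mathbf{T}_f$ to write an arbitrary $y\in L^2(0,\ell)$ as $\mathbf{T}_f u$, approximate $u$ by polynomials in $L^2(-\ell,\ell)$, and then invoke the boundedness of $\mathbf{T}_f$ together with the mapping property $\mathbf{T}_f[x^k]=\varphi_f^{(k)}$. No substantive difference.
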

	\begin{proof}
		Let $y\in L^2(0,\ell)$, and take $u\in L^2(-\ell,\ell)$ with $y=\mathbf{T}_fu$. Given $\varepsilon>0$, choose $p\in \mathcal{P}[-\ell,\ell]$ such that $\left\|u-p\right\|_{L^2(-\ell,\ell)}<\frac{\varepsilon}{\|\mathbf{T}_f\|_{\mathcal{B}(L^2(-\ell,\ell),L^2(0,\ell))}}$. Set $\widehat{p}=\mathbf{T}_fp\in \operatorname{Span}\{\varphi_f^{(k)}\}_{k=0}^{\infty}$. Then
		\begin{align*}
			\|y-\widehat{p}\|_{L^2(0,\ell)}=\|\mathbf{T}_f(u-p)\|_{L^2(0,\ell)}\leq \|\mathbf{T}_f\|_{\mathcal{B}(L^2(-\ell,\ell),L^2(0,\ell))}\|u-p\|_{L^2(-\ell,\ell)}<\varepsilon.
		\end{align*}
	\end{proof}

	Now we investigate the relationships between the operator $\mathbf{T}_f$, and the one corresponding to the associated Darboux equation $\mathbf{S}_{\frac{1}{f}}$, denoted by $\mathbf{T}_{\frac{1}{f}}$. First, by the mapping property \eqref{eq:mappingprop} and Proposition \ref{Prop:Darbouxformalpowers}, we have $\mathbf{D}_f\mathbf{T}_f[1]=\mathbf{D}_f\varphi_f^{(0)}=0=\mathbf{T}_{\frac{1}{f}}[D1]$, and for $k\geq 1$: 
	\[
	\mathbf{D}_f\mathbf{T}_fx^k =\mathbf{D}_f\varphi_f^{(k)}=k\varphi_{\frac{1}{f}}^{(k-1)}=\mathbf{T}_{\frac{1}{f}}Dx^k.
	\]
	By linearity, we obtain the relation
	\[
	\mathbf{D}_f\mathbf{T}_fp=\mathbf{T}_{\frac{1}{f}}p'\qquad \forall p\in \mathcal{P}[-\ell,\ell].
	\]
	This is equivalent to stating that
	\[
	\mathbf{T}_fp(x)=f(x)\int_0^x\frac{\mathbf{T}_{\frac{1}{f}}p'(t)}{f(t)}dt+cf(x),
	\]
	for some $c\in \mathbb{C}$. However, by Remark \eqref{Remark:initialconditionsunitaryoperator}, we have $c=\mathbf{T}_fp(0)=p(0)$. 
	
	Let us consider the operator 
	\[
	\mathbf{J}_f y(x):= f(x)\int_0^x\frac{y(t)}{f(t)}dt,\quad y\in L^2(0,\ell).
	\]
	It is clear that $\mathbf{J}_f\in \mathcal{B}(L^2(0,\ell))$, that $\mathbf{J}_f(L^2(0,\ell))\subset W^{1,2}(0,\ell)$, and that $\mathbf{D}_f\mathbf{J}_f=\mathbf{I}_{L^2(0,\ell)}$. Hence we obtain that
	\begin{equation}\label{eq:polinomialstransmute}
		\mathbf{T}_fp=\mathbf{J}_f\mathbf{T}_{\frac{1}{f}}+p(0)f\qquad \forall p\in \mathcal{P}[-\ell,\ell].
	\end{equation}
	We extend this relation to $W^{1,2}(-\ell,\ell)$.
	
	\begin{theorem}\label{Th:transmdarboux}
		$\mathbf{T}_f(W^{1,2}(-\ell,\ell))\subset W^{1,2}(0,\ell)$ and the following relation holds:
		\begin{equation}\label{eq:transmdarboux}
			\mathbf{D}_f\mathbf{T}_fy=\mathbf{T}_{\frac{1}{f}}y'\qquad \forall y\in W^{1,2}(-\ell,\ell).
		\end{equation}
	\end{theorem}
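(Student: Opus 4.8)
The plan is to upgrade the polynomial identity \eqref{eq:polinomialstransmute} to all of $W^{1,2}(-\ell,\ell)$ by a density argument, and then apply $\mathbf{D}_f$. First I would record that $\mathcal{P}[-\ell,\ell]$ is dense in $W^{1,2}(-\ell,\ell)$: given $y\in W^{1,2}(-\ell,\ell)$, approximate $y'$ uniformly on $[-\ell,\ell]$ by polynomials via the Weierstrass theorem, integrate and fix the additive constant, obtaining polynomials $p_n$ with $p_n\to y$ in $C^1[-\ell,\ell]$, hence in $W^{1,2}(-\ell,\ell)$. In particular $p_n\to y$ and $p_n'\to y'$ in $L^2(-\ell,\ell)$, and $p_n(0)\to y(0)$ since the embedding $W^{1,2}(-\ell,\ell)\hookrightarrow C[-\ell,\ell]$ makes the evaluation functional at $0$ continuous.

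Next I would pass to the limit in \eqref{eq:polinomialstransmute}. On the left, $\mathbf{T}_f\in\mathcal{B}(L^2(-\ell,\ell),L^2(0,\ell))$ gives $\mathbf{T}_fp_n\to\mathbf{T}_fy$ in $L^2(0,\ell)$. On the right, $\tfrac1f$ is an admissible nonvanishing solution of $\mathbf{S}_{\frac1f}$ with $\tfrac1{f(0)}=1$, so Theorem \ref{Th:integralrepresentation1} applied to $\tfrac1f$ furnishes $\mathbf{T}_{\frac1f}\in\mathcal{B}(L^2(-\ell,\ell),L^2(0,\ell))$; together with $\mathbf{J}_f\in\mathcal{B}(L^2(0,\ell))$ this yields $\mathbf{J}_f\mathbf{T}_{\frac1f}p_n'\to\mathbf{J}_f\mathbf{T}_{\frac1f}y'$ in $L^2(0,\ell)$, while $p_n(0)f\to y(0)f$ in $L^2(0,\ell)$. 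Equating the $L^2$-limits of both sides gives the identity
\[
\mathbf{T}_fy=\mathbf{J}_f\mathbf{T}_{\frac1f}y'+y(0)f,\qquad y\in W^{1,2}(-\ell,\ell).
\]
Since $\mathbf{J}_f\bigl(L^2(0,\ell)\bigr)\subset W^{1,2}(0,\ell)$ and $f\in W^{1,2}(0,\ell)$, the right-hand side lies in $W^{1,2}(0,\ell)$, whence $\mathbf{T}_fy\in W^{1,2}(0,\ell)$, which is the first assertion.

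Finally I would apply $\mathbf{D}_f$ to this identity. By linearity of $\mathbf{D}_f$, the relation $\mathbf{D}_f\mathbf{J}_f=\mathbf{I}_{L^2(0,\ell)}$ applied to $\mathbf{T}_{\frac1f}y'\in L^2(0,\ell)$, and $\mathbf{D}_ff=f'-\tfrac{f'}{f}f=0$, we get $\mathbf{D}_f\mathbf{T}_fy=\mathbf{T}_{\frac1f}y'$, which is \eqref{eq:transmdarboux}. The whole argument is essentially routine; the only points that need care are choosing the right topology for the density of polynomials in $W^{1,2}$ and keeping track of which operators are bounded between which spaces — in particular, that evaluation at $0$ is continuous on $W^{1,2}(-\ell,\ell)$, so that the constant terms $p_n(0)f$ converge in $L^2(0,\ell)$.
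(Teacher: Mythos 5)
Your argument is essentially the paper's own proof: both upgrade the polynomial identity $\mathbf{T}_fp=\mathbf{J}_f\mathbf{T}_{\frac{1}{f}}p'+p(0)f$ to all of $W^{1,2}(-\ell,\ell)$ by density of $\mathcal{P}[-\ell,\ell]$ together with the boundedness of $\mathbf{T}_f$ and $\mathbf{J}_f\mathbf{T}_{\frac{1}{f}}$ on $L^2$ and the continuity of evaluation at $0$, and then apply $\mathbf{D}_f$ using $\mathbf{D}_f\mathbf{J}_f=\mathbf{I}_{L^2(0,\ell)}$ and $\mathbf{D}_ff=0$. The only blemish is in your justification of the density step: $y'$ is merely in $L^2(-\ell,\ell)$, so it cannot in general be approximated \emph{uniformly} by polynomials via Weierstrass; approximate $y'$ in $L^2$ instead (first by continuous functions, then uniformly by polynomials) and integrate, which still gives $p_n\to y$ uniformly, $p_n'\to y'$ in $L^2(-\ell,\ell)$, and $p_n(0)\to y(0)$ --- exactly what your limit passage requires.
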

	\begin{proof}
		Let $y\in W^{1,2}(-\ell,\ell)$. Choose a sequence $\{p_n\}\subset \mathcal{P}[-\ell,\ell]$ such that $p_n\rightarrow y$ uniformly on $[-\ell,\ell]$ and $p_n'\rightarrow y$ in $L^2(-\ell,\ell)$ (see, e.g., \cite[Cor. 23]{mineimpedance3}). Since $\mathbf{T}_f, \mathbf{J}_f\mathbf{T}_{\frac{1}{f}}\in \mathcal{B}(L^2(-\ell,\ell), L^2(0,\ell))$ and $p_n(0)\rightarrow y(0)$, we get
		\begin{align*}
			\mathbf{T}_f y= \lim_{n\rightarrow \infty} 	\mathbf{T}_fp=\lim_{n\rightarrow \infty} \mathbf{J}_f\mathbf{T}_{\frac{1}{f}}p_n'+p_n(0)f=\mathbf{J}_f\mathbf{T}_{\frac{1}{f}}y'+y(0)f.
		\end{align*}
		Consequently, $\frac{1}{f}\mathbf{T}_fy\in W^{1,2}(0,\ell)$, and 
		\[
		\mathbf{D}_f\mathbf{T}_fy= \mathbf{D}_f\mathbf{J}_f\mathbf{T}_{\frac{1}{f}}+y(0)\mathbf{D}_ff=\mathbf{T}_{\frac{1}{f}}y',
		\]
		as desired. By Remark \ref{remark:bijection} it follows that $\mathbf{T}_fy\in W^{1,2}(0,\ell)$.
	\end{proof}
	
	From this transmutation relation, we obtain the so-called transmutation property for the operators $\mathbf{S}_f$ and $-D^2$.
	
	\begin{theorem}\label{Th:transmutationoperator1}
		$\mathbf{T}_f(W^{2,2}(-\ell,\ell))\subset \mathscr{D}_2(\mathbf{S}_f)$ and the following transmutation relation holds:
		\begin{equation}\label{eq:transmutationrelation}
			\mathbf{S}_f\mathbf{T}_fy=-\mathbf{T}_fy''\qquad \forall y\in W^{2,2}(-\ell,\ell).
		\end{equation}
	\end{theorem}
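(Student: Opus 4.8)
The plan is to bootstrap from the first-order transmutation identity of Theorem~\ref{Th:transmdarboux}, applied twice --- once for $f$ and once for its Darboux partner $\frac{1}{f}$ --- and then to invoke the factorization $\mathbf{S}_f=-\mathbf{D}_{\frac{1}{f}}\mathbf{D}_f$ from Theorem~\ref{th:factorizacion}(v) together with the domain characterization of Theorem~\ref{th:factorizacion}(iii). No new estimates are needed: the entire argument is a chaining of identities already established.

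First I would observe that $q_{\frac{1}{f}}$ is itself an admissible potential. Indeed, $\frac{1}{f}$ is a nonvanishing solution of $\mathbf{S}_{\frac{1}{f}}w=0$ (Section~5), normalized by $\frac{1}{f}(0)=1$ since $f(0)=1$. Hence the entire construction of Section~6, and in particular Theorems~\ref{Th:integralrepresentation1}, \ref{Th:mappingproperty} and~\ref{Th:transmdarboux}, applies verbatim with $f$ replaced by $\frac{1}{f}$. Because $\frac{1}{\,1/f\,}=f$, this yields the companion statement $\mathbf{T}_{\frac{1}{f}}\bigl(W^{1,2}(-\ell,\ell)\bigr)\subset W^{1,2}(0,\ell)$ together with
\[
\mathbf{D}_{\frac{1}{f}}\mathbf{T}_{\frac{1}{f}}z=\mathbf{T}_f z' \qquad \forall z\in W^{1,2}(-\ell,\ell).
\]

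Now take $y\in W^{2,2}(-\ell,\ell)\subset W^{1,2}(-\ell,\ell)$. By Theorem~\ref{Th:transmdarboux}, $\mathbf{T}_f y\in W^{1,2}(0,\ell)$ and $\mathbf{D}_f\mathbf{T}_f y=\mathbf{T}_{\frac{1}{f}}y'$. Since $y'\in W^{1,2}(-\ell,\ell)$, the companion identity of the previous paragraph gives $\mathbf{T}_{\frac{1}{f}}y'\in W^{1,2}(0,\ell)$ and $\mathbf{D}_{\frac{1}{f}}\mathbf{T}_{\frac{1}{f}}y'=\mathbf{T}_f y''$. Therefore $\mathbf{D}_f\mathbf{T}_f y\in W^{1,2}(0,\ell)$, and by the characterization in Theorem~\ref{th:factorizacion}(iii) we conclude $\mathbf{T}_f y\in\mathscr{D}_2(\mathbf{S}_f)$, which is the inclusion asserted in the theorem. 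Finally, Theorem~\ref{th:factorizacion}(v) yields
\[
\mathbf{S}_f\mathbf{T}_f y=-\mathbf{D}_{\frac{1}{f}}\mathbf{D}_f\mathbf{T}_f y=-\mathbf{D}_{\frac{1}{f}}\mathbf{T}_{\frac{1}{f}}y'=-\mathbf{T}_f y'',
\]
which is \eqref{eq:transmutationrelation}.

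The only point genuinely requiring care is the legitimacy of the ``Darboux-dual'' step, i.e.\ that the results proved for $\mathbf{T}_f$ transfer to $\mathbf{T}_{\frac{1}{f}}$. This is not circular reasoning: $q_{\frac{1}{f}}\in W^{-1,2}(0,\ell)$ comes equipped with the explicit nonvanishing solution $\frac{1}{f}$ (with the correct normalization at $x_0=0$), so Sections~3--6 apply to it without modification; everything else in the proof is the routine composition of the two first-order transmutation identities and the second-order factorization.
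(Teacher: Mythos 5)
Your proposal is correct and follows essentially the same route as the paper: both derive the domain inclusion from $\mathbf{D}_f\mathbf{T}_fy=\mathbf{T}_{\frac{1}{f}}y'$ together with Theorem~\ref{th:factorizacion}(iii), and then chain the two first-order identities through the factorization $\mathbf{S}_f=-\mathbf{D}_{\frac{1}{f}}\mathbf{D}_f$. The only difference is that you spell out the ``Darboux-dual'' applicability of Theorem~\ref{Th:transmdarboux} to $\frac{1}{f}$, which the paper uses implicitly.
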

	\begin{proof}
		Let $y\in W^{2,2}(-\ell,\ell)$. By the previous theorem, we have $\mathbf{T}_{\frac{1}{f}}y'\in W^{1,2}(0,\ell)$, and the relation \eqref{eq:transmdarboux} implies that $\mathbf{D}_f\mathbf{T}_fy\in W^{1,2}(0,\ell)$, that is, $\mathbf{T}_fy\in \mathscr{D}_2(\mathbf{S}_f)$. Thus,
		\begin{align*}
			-\mathbf{S}_f\mathbf{T}_fy = \mathbf{D}_{\frac{1}{f}}\left[ \mathbf{D}_f\mathbf{T}_fy\right] =\mathbf{D}_{\frac{1}{f}}\left[ \mathbf{T}_{\frac{1}{f}}y'\right]=\mathbf{T}_fy''. 
		\end{align*}
	\end{proof}
	
	\begin{theorem}\label{th:continuityofoperator}
		$\mathbf{T}_f(C[-\ell,\ell])\subset C[0,\ell]$. Furthermore, $\mathbf{T}_f\in \mathcal{B}(C[-\ell,\ell],C[0,\ell])$.
	\end{theorem}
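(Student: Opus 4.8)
The plan is to first bound the kernel $K_f$ in an $x$‑uniform way, deduce that $\mathbf{T}_f$ maps $C[-\ell,\ell]$ boundedly into $L^\infty(0,\ell)$, and then upgrade this to a map into $C[0,\ell]$ by approximating a continuous function by polynomials and invoking the mapping property of Theorem~\ref{Th:mappingproperty}.

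First I would revisit the estimates established in the proof of Theorem~\ref{Th:integralrepresentation1}: for every $x\in(0,\ell]$ one has $\int_{-x}^{x}|G_{i,i}^{f^{(-1)^j}}(x,t)|^2\,dt\leq M^2$, where $M^2:=d_f+2d_f^2(\ell^2 d_f+\ell)e^{\ell d_f}$ and $d_f=\|f'/f\|_{L^2(0,\ell)}^2<\infty$. Since the projections $P_t^{\pm}$ are contractions on $L^2(-x,x)$, formula~\eqref{eq:kernelexponential} gives $\|K_f(x,\cdot)\|_{L^2(-x,x)}\leq 2M$ for every $x\in(0,\ell]$, whence, by Cauchy--Schwarz, $\int_{-x}^{x}|K_f(x,t)|\,dt\leq 2\sqrt{2\ell}\,M=:C_0$ uniformly in $x$. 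Consequently, for $u\in C[-\ell,\ell]$ and $x\in(0,\ell]$,
\[
|\mathbf{T}_fu(x)|\leq|u(x)|+\int_{-x}^{x}|K_f(x,t)|\,|u(t)|\,dt\leq(1+C_0)\|u\|_{C[-\ell,\ell]},
\]
so $\mathbf{T}_f$ maps $C[-\ell,\ell]$ into $L^\infty(0,\ell)$ with norm at most $1+C_0$; applying this to a difference shows it is Lipschitz from $C[-\ell,\ell]$ into $L^\infty(0,\ell)$.

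To pass from $L^\infty$ to continuity I would use density. Given $u\in C[-\ell,\ell]$, pick $\{p_n\}\subset\mathcal{P}[-\ell,\ell]$ with $p_n\to u$ uniformly on $[-\ell,\ell]$ (Weierstrass). By Theorem~\ref{Th:mappingproperty} and linearity, each $\mathbf{T}_fp_n$ is a finite linear combination of the formal powers $\varphi_f^{(k)}$; since $\varphi_f^{(k)}\in W^{1,2}(0,\ell)\hookrightarrow C[0,\ell]$, we get $\mathbf{T}_fp_n\in C[0,\ell]$. Applying the displayed bound to $u-p_n$ gives $|\mathbf{T}_fu(x)-\mathbf{T}_fp_n(x)|\leq(1+C_0)\|u-p_n\|_{C[-\ell,\ell]}$ for every $x\in[0,\ell]$, so the continuous functions $\mathbf{T}_fp_n$ converge uniformly on $[0,\ell]$ to $\mathbf{T}_fu$; a uniform limit of continuous functions is continuous, hence $\mathbf{T}_fu\in C[0,\ell]$. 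The bound $\|\mathbf{T}_fu\|_{C[0,\ell]}\leq(1+C_0)\|u\|_{C[-\ell,\ell]}$ then yields $\mathbf{T}_f\in\mathcal{B}(C[-\ell,\ell],C[0,\ell])$.

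The main obstacle is that $K_f$ is only $L^2$ on $\mathcal{T}_\ell$, so continuity of $x\mapsto\mathbf{T}_fu(x)$ cannot be read off directly from continuity of the kernel. The argument circumvents this by extracting from the $L^2$ estimate an $x$‑uniform $L^1$‑in‑$t$ bound on $K_f(x,\cdot)$ — exactly what forces $\mathbf{T}_f$ to land in $L^\infty$ — and then transferring continuity from the dense set of polynomials, whose $\mathbf{T}_f$‑images are known to be continuous (they are spanned by the formal powers).
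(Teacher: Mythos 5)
Your proof is correct and follows essentially the same route as the paper: a uniform-in-$x$ bound on $\int_{-x}^{x}|K_f(x,t)|\,dt$ gives boundedness into $L^{\infty}(0,\ell)$, and continuity is then transferred from the polynomials, whose images lie in $\operatorname{Span}\{\varphi_f^{(k)}\}\subset W^{1,2}(0,\ell)\hookrightarrow C[0,\ell]$, by uniform convergence. The only cosmetic difference is that the paper quotes a direct $L^1$ estimate for $G^f$ from the Dirac-system reference, whereas you obtain the same uniform $L^1$ bound via Cauchy--Schwarz from the $L^2$ estimate already established in the proof of Theorem~\ref{Th:integralrepresentation1}.
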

	\begin{proof}
		According to \cite[p. 32]{haratumyant}, the following estimate holds:
		\[
		\int_{-x}^{x}|G^f(x,t)|dt\leq e^{\|\frac{f'}{f}\|_{L^1(0,\ell)}}-1\leq e^{\sqrt{\ell}\|\frac{f'}{f}\|_{L^2(0,\ell)}}-1
		\] 
		In particular, $\int_{-x}^{x}|K_f(x,t)|dt\leq 4M$, where $M=e^{\sqrt{\ell}\|\frac{f'}{f}\|_{L^2(0,\ell)}}-1$. Then, given $u\in L^{\infty}(-\ell,\ell)$ we obtain
		\[
		|\mathbf{T}_fu(x)|\leq |u(x)|+\int_{-x}^{x}|K_f(x,t)||u(t)|dt\leq (1+M)\|u\|_{L^{\infty}(-\ell,\ell)}.
		\]
		Thus, $\mathbf{T}_f\in \mathcal{B}(L^{\infty}(-\ell,\ell),L^{\infty}(0,\ell))$. Now, given $u\in C[-\ell,\ell]$, take a sequence of polynomials $\{p_n\}\subset \mathcal{P}[-\ell,\ell]$ such that $p_n\rightarrow u$ in $L^{\infty}(-\ell,\ell)$. Hence $\mathbf{T}_fp_n\rightarrow \mathbf{T}_fu$ in $L^{\infty}(0,\ell)$. By Theorem \ref{Th:transmutationoperator1}, $\{\mathbf{T}_fp_n\}\subset C[0,\ell]$, which implies that $\mathbf{T}_fu\in C[0,\ell]$, as desired.
	\end{proof}

	Let us denote by $P^{\pm}: L^2(-\ell,\ell)\rightarrow L^2(-\ell,\ell)$ the even and odd projections. Consider the kernels
	\begin{align}
		K_f^{+}(x,t)&:=\frac{1}{2}\left(K_f(x,t)+K_f(x,-t)\right)=\frac{1}{2}\left(G_{2,2}^{\frac{1}{f}}(x,t)+G_{2,2}^{\frac{1}{f}}(x,-t)\right) \label{eq:evenkernel}\\
		K_f^{-}(x,t)&:=\frac{1}{2}\left(K^(x,t)-K_f(x,-t)\right)=\frac{1}{2}\left(G_{1,1}^f(x,t)+G_{1,1}^f(x,-t)\right),\label{oddkernel}
	\end{align}
	and the corresponding operators $\mathbf{T}^{\pm}_f\in  \mathcal{B}(L^2(0,\ell))$ given by
	\begin{equation}\label{eq:evenoddtransmutations}
		\mathbf{T}_f^{\pm}u(x)=u(x)+\int_0^xK_f^{\pm}(x,t)u(t)dt.
	\end{equation}
	A direct computation shows the following decomposition:
	\begin{equation}\label{eq:decompositionop}
		\mathbf{T}_f =\mathbf{T}_f^+P^+ + \mathbf{T}_f^{-}P^{-}.
	\end{equation}
	From this decomposition, we obtain the following identities:
	\begin{equation}\label{eq:cosineandsinetransm}
		C_f(\rho,x)=\mathbf{T}_f^+[\cos(\rho x)],\qquad S_f(\rho,x)=\mathbf{T}_f^{\pm}\left[\frac{\sin(\rho x)}{\rho}\right].
	\end{equation}
	For this reason, in some contexts $K_f^{+}$ and $K_f^{-}$ are referred to as the {\it cosine and sine transmutation kernels}, respectively. From the decomposition \eqref{eq:decompositionop}, we have
	\begin{equation}
		\mathbf{T}_f^{+}[x^{2k}]=\varphi_f^{(2k)} \quad \text{and}\;\;\; 	\mathbf{T}_f^{-}[x^{2k+1}]=\varphi_f^{(2k+1)}\quad \text{for all } k\in \mathbb{N}_0.
	\end{equation}

	Let us define
	\begin{equation}
		W_+^{2,2}(0,\ell):=\{u\in W^{2,2}(0,\ell)\, |\, u'(0)=0\} \quad \text{and  } \;\;\; W_{-}^{2,2}(0,\ell):=\{u\in W^{2,2}(0,\ell)\, |\, u(0)=0\} 
	\end{equation}
	Both subspaces are closed in $W^{2,2}(0,\ell)$ since they are the preimages of $\{0\}$ under the bounded functionals $\delta,\delta':W^{2,2}(0,\ell)\rightarrow \mathbb{C}$.
	
	\begin{theorem}
		The following relations hold:
		\begin{equation}\label{eq:transrelationevenodd}
			\mathbf{S}_f\mathbf{T}_f^{\pm}u=-\mathbf{T}_f^{\pm}D^2u\qquad \forall u\in W^{2,2}_{\pm}(0,\ell).
		\end{equation}
	\end{theorem}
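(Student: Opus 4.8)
The plan is to deduce both identities from the transmutation relation already established in Theorem~\ref{Th:transmutationoperator1}, combined with the decomposition \eqref{eq:decompositionop}, by extending $u$ to the interval $(-\ell,\ell)$ in the parity class selected by the superscript. The point is that if $\tilde u$ is a function in $W^{2,2}(-\ell,\ell)$ having the same parity as the chosen sign, then its opposite-parity projection vanishes, so \eqref{eq:decompositionop} collapses to $\mathbf{T}_f\tilde u=\mathbf{T}_f^{\pm}(\tilde u|_{(0,\ell)})$, and the same applies to $\tilde u''$; one can then simply invoke Theorem~\ref{Th:transmutationoperator1} for $\tilde u$.

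In detail, for the $+$ case take $u\in W^{2,2}_+(0,\ell)$, so $u'(0)=0$, and let $\tilde u$ be the even extension $\tilde u(x):=u(|x|)$. Away from the origin $\tilde u'(x)=\operatorname{sgn}(x)\,u'(|x|)$ and $\tilde u''(x)=u''(|x|)$; the hypothesis $u'(0)=0$ makes $\tilde u'$ continuous at $0$, hence absolutely continuous on $[-\ell,\ell]$, and $\tilde u''\in L^2(-\ell,\ell)$, so $\tilde u\in W^{2,2}(-\ell,\ell)$ and both $\tilde u$ and $\tilde u''$ are even. Since the odd projections of $\tilde u$ and $\tilde u''$ vanish, \eqref{eq:decompositionop} gives $\mathbf{T}_f\tilde u=\mathbf{T}_f^{+}u$ and $\mathbf{T}_f\tilde u''=\mathbf{T}_f^{+}D^2u$ (recalling $\tilde u|_{(0,\ell)}=u$). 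Applying Theorem~\ref{Th:transmutationoperator1} to $\tilde u$ then yields $\mathbf{T}_f^{+}u=\mathbf{T}_f\tilde u\in\mathscr{D}_2(\mathbf{S}_f)$ together with $\mathbf{S}_f\mathbf{T}_f^{+}u=\mathbf{S}_f\mathbf{T}_f\tilde u=-\mathbf{T}_f\tilde u''=-\mathbf{T}_f^{+}D^2u$.

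The $-$ case is symmetric: for $u\in W^{2,2}_-(0,\ell)$ one has $u(0)=0$, so the odd extension $\tilde u(x):=\operatorname{sgn}(x)\,u(|x|)$ is continuous, $\tilde u'(x)=u'(|x|)$ is continuous and absolutely continuous, and $\tilde u''(x)=\operatorname{sgn}(x)\,u''(|x|)\in L^2(-\ell,\ell)$; hence $\tilde u\in W^{2,2}(-\ell,\ell)$ with $\tilde u,\tilde u''$ odd. Now \eqref{eq:decompositionop} gives $\mathbf{T}_f\tilde u=\mathbf{T}_f^{-}u$ and $\mathbf{T}_f\tilde u''=\mathbf{T}_f^{-}D^2u$, and Theorem~\ref{Th:transmutationoperator1} produces $\mathbf{S}_f\mathbf{T}_f^{-}u=-\mathbf{T}_f^{-}D^2u$.

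The only step requiring genuine care is the very first one, namely checking that the parity-respecting extension $\tilde u$ actually lies in $W^{2,2}(-\ell,\ell)$; this is exactly where the conditions $u'(0)=0$ (resp. $u(0)=0$) are essential, since otherwise the distributional derivative $\tilde u''$ would pick up a Dirac mass at the origin and $\tilde u$ would fall outside the hypotheses of Theorem~\ref{Th:transmutationoperator1}. Everything else is a formal consequence of \eqref{eq:decompositionop} and the already-proven transmutation relation. An alternative would be to verify the identity on the families $\{\cos(\rho x)\}$ and $\{\rho^{-1}\sin(\rho x)\}$ via \eqref{eq:cosineandsinetransm} together with $\mathbf{S}_fC_f=\rho^2C_f$ and $\mathbf{S}_fS_f=\rho^2S_f$, but that route would additionally demand a density argument in $W^{2,2}_\pm(0,\ell)$, so the extension argument is the more economical one.
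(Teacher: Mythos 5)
Your proof is correct and follows essentially the same route as the paper: your parity-respecting extensions $\tilde u(x)=u(|x|)$ and $\tilde u(x)=\operatorname{sgn}(x)u(|x|)$ are exactly the paper's extension operators $E^{\pm}$, and you identify the same key point, namely that the conditions $u'(0)=0$ (resp.\ $u(0)=0$) are what prevent a Dirac mass in $\tilde u''$ and place $\tilde u$ in $W^{2,2}(-\ell,\ell)$ so that Theorem~\ref{Th:transmutationoperator1} applies.
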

	\begin{proof}
		Let $E^{\pm}: L^2(0,\ell)\rightarrow L^2(-\ell,\ell)$ be the extension operators defined by 
		\[
		E^{\pm}u(x):=\begin{cases}
			u(x), & 0<x<\ell,\\
			\pm u(-x), & -\ell <x<0.
		\end{cases}
		\]
		It is clear that $P^{\pm}E^{\pm}u=u$ and $P^{\pm}E^{\mp}u=0$ for all $u\in L^2(0,\ell)$. For $u\in W^{1,2}(0,\ell)$, a direct computation shows the following relations (in the distributional way):
		\[
		DE^+u=E^{-}Du \quad \text{and  } \;\;\; DE^{-}u=E^+Du+2u(0)\delta(x).
		\] 
		In particular, if $u(0)=0$, then $DE^{-}u=E^{+}Du$.
		
		Suppose that $u\in W_+^{2,2}(0,\ell)$. By definition, $u'(0)=0$, so
		\[
		D^2E^{+}u=DE^{-}u'=E^+u''\quad \text{and then } E^+u\in W^{2,2}(-\ell,\ell). 
		\]
		Since $\mathbf{T}_fE^+u=\mathbf{T}_f^+u$, it follows that $\mathbf{T}_f^+u\in \mathscr{D}(\mathbf{S}_f)$ and hence
		\[
		\mathbf{S}_f\mathbf{T}_f^+u= \mathbf{S}_f\mathbf{T}_fE^+u= -\mathbf{T}_fD^2E^+u=-\mathbf{T}_fE^+D^2u''=-\mathbf{T}_f^+D^2u.
		\] 
		Similarly, if $u\in W_{-}^{2,2}(0,\ell)$, then $D^2E^{-}u=DE^+u'=E^{-}u''$, so $E^{-}\in W^{2,2}(-\ell,\ell)$. Applying the same argument, we obtain the transmutation relation  \eqref{eq:transrelationevenodd}. 
	\end{proof}
	\begin{remark}
		It is important to emphasize the condition that $\mathbf{T}_f^{\pm}$ are transmutation operators in $W_{\pm}^{2,2}(0,\ell)$, even for regular potentials. For example, in \cite[Example 3]{camposlbases} it was shown that when $q$ is a constant, $\mathbf{S}_f\mathbf{T}_f^{-}[1]\neq 0=\mathbf{T}_f^{-}[D^21]$, because $1\not\in W_{-}^{2,2}(0,\ell)$.
	\end{remark}

	\section{On the existence and construction of the non-vanishing solution}\label{Sec:nonvanish}
	
	In this section,  we consider a general finite interval $J$. Let $q\in W^{-1,2}(J)$ and let $\sigma\in L^2(J)$ be an anti-derivative. We prove Theorem~\ref{Th:maintheoremadmissible}, which establishes the existence of a nonvanishing solution $f \in W^{1,2}(J)$ of
	\begin{equation}\label{eq:equationoff}
		(y_{\sigma}^{[1]})' + \sigma y_{\sigma}^{[1]} + \sigma^2 y = 0.
	\end{equation}
	We begin by considering several particular cases. The first one corresponds to the situation in which the potential $q$ is given by delta point interactions, leading to the following result.
	
	\begin{theorem}
		Every potential given by the perturbation of an $L^2$-regular function by a finite number of delta point interactions is admissible, that is, every potential $q$ of the form
		\begin{equation}
			q(x)=q_{reg}(x)+\sum_{k=1}^{N}\alpha_k\delta(x-x_k),
		\end{equation} 	
		where $q_{reg}\in L^2(0,\ell)$ is in general complex-valued, $\inf J<x_1<\dots <x_n<\sup J$, and $\alpha_1,\dots,\alpha_N\in \mathbb{C}$.
	\end{theorem}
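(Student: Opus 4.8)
The plan is to recast the equation $\mathbf S_q f=0$ as a linear first–order system with \emph{bounded} coefficients and then to choose the Cauchy datum of $f$ so as to avoid a Lebesgue–null set of ``bad'' parameters in $\mathbb C$; working over $\mathbb C$ is essential, since for a complex potential there is no reason to expect a real nonvanishing solution.

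\emph{Step 1 (a bounded antiderivative and the associated system).} Fix $x_0\in\overline J$ and set
\[
\sigma(x):=\int_{x_0}^{x}q_{reg}(t)\,dt+\sum_{k:\,x_k<x}\alpha_k .
\]
Then $\sigma'=q$ in $W^{-1,2}(J)$, and since $\bigl|\int_{x_0}^{x}q_{reg}\bigr|\le |J|^{1/2}\|q_{reg}\|_{L^2(J)}$ by Cauchy–Schwarz, we obtain $\|\sigma\|_{L^\infty(J)}\le |J|^{1/2}\|q_{reg}\|_{L^2(J)}+\sum_{k=1}^{N}|\alpha_k|<\infty$; in particular $\sigma,\sigma^2\in L^\infty(J)$. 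By the quasiderivative description of Section 2, a function $y\in W^{1,2}(J)$ with $y_{\sigma}^{[1]}\in W^{1,1}(J)$ satisfies $\mathbf S_q y=0$ exactly when $(y,y_{\sigma}^{[1]})^{T}$ solves
\[
\begin{pmatrix} y\\ y_{\sigma}^{[1]}\end{pmatrix}'=A(x)\begin{pmatrix} y\\ y_{\sigma}^{[1]}\end{pmatrix},\qquad A:=\begin{pmatrix}\sigma & 1\\ -\sigma^2 & -\sigma\end{pmatrix}.
\]
Because $A\in L^\infty(J;\mathbb C^{2\times2})$, the fundamental matrix $\Phi$ with $\Phi(x_0)=I_{2\times2}$ exists, is Lipschitz on $\overline J$, and has $\det\Phi\equiv1$ (since $\operatorname{tr}A\equiv0$); moreover every solution is Lipschitz, so its first component lies in $W^{1,2}(J)$ and, as $(y_\sigma^{[1]})'+\sigma^2y=-\sigma y_\sigma^{[1]}\in L^2(J)$, it belongs to $\mathscr D_2(\mathbf S_q)$.

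\emph{Step 2 (choosing the initial datum).} For $c\in\mathbb C$ let $f_c=\Phi_{11}+c\,\Phi_{12}$ be the solution with $f_c(x_0)=1$, $(f_c)_{\sigma}^{[1]}(x_0)=c$, and put $B:=\{c\in\mathbb C:\ f_c(x)=0\text{ for some }x\in\overline J\}$. If $f_c(x)=0$ and $\Phi_{12}(x)\neq0$, then $c=-\Phi_{11}(x)/\Phi_{12}(x)$; if $\Phi_{12}(x)=0$, then $\det\Phi(x)=\Phi_{11}(x)\Phi_{22}(x)=1$ forces $\Phi_{11}(x)\neq0$, so no such $c$ exists. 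Hence $B\subset g(\Omega)$, where $\Omega:=\{x\in\overline J:\Phi_{12}(x)\neq0\}$ is open in $\overline J$ and $g(x):=-\Phi_{11}(x)/\Phi_{12}(x)$ is locally Lipschitz on $\Omega$ (a quotient of Lipschitz functions whose denominator is locally bounded away from $0$). Writing $\Omega$ as a countable union of compact intervals on each of which $g$ is Lipschitz, $g(\Omega)$ is a countable union of rectifiable curves in $\mathbb C\cong\mathbb R^2$, hence Lebesgue–null. Pick $c_0\in\mathbb C\setminus B$; then $f:=f_{c_0}$ is a solution of $\mathbf S_q f=0$ with $f\in\mathscr D_2(\mathbf S_q)$, $f(x_0)=1$, and $f(x)\neq0$ for all $x\in\overline J$, so $q$ is admissible.

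\emph{Main obstacle.} The only step with genuine content is the measure–theoretic one, and it is precisely where the hypotheses enter: $q_{reg}\in L^2$ together with finitely many point masses makes $\sigma$ bounded, hence $A\in L^\infty$ and the entries of $\Phi$ Lipschitz, so the ``vanishing curve'' $x\mapsto -\Phi_{11}(x)/\Phi_{12}(x)$ is rectifiable on each compact piece of $\Omega$ and cannot fill a set of positive planar measure. (It is also a special case of the general existence result of this section, since $\sigma\in L^\infty\subset L^{p}(J)$ for every $p>2$.)
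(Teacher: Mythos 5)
Your argument is correct, and it reaches the conclusion by a genuinely different route from the paper, which for this particular theorem gives no proof at all but defers to \cite{minedeltas} (Prop.~24 and Th.~26), where the nonvanishing solution is built constructively, interval by interval between the interaction points $x_k$, together with an explicit construction method. What you prove is in effect a self-contained special case of the paper's own later theorem in this section for potentials admitting an antiderivative $\sigma\in L^p(J)$ with $2<p\leq\infty$: your $\sigma$ is bounded, so the fundamental matrix $\Phi$ is Lipschitz with $\det\Phi\equiv 1$, and the key steps match --- recast $\mathbf S_q f=0$ as the first--order system in $(y,y_\sigma^{[1]})$, observe that $\Phi_{11}$ and $\Phi_{12}$ never vanish simultaneously, and rule out a null set of parameters. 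The difference is that you work in the affine chart $f_c(x_0)=1$ of $\mathbb{CP}^1$ rather than on the Riemann sphere, which lets you replace the H\"older/Hausdorff-dimension machinery by the elementary fact that a countable union of rectifiable curves is Lebesgue-null in $\mathbb{R}^2$; the determinant identity is exactly what justifies restricting to that chart, and you use it correctly. All the supporting details check out: $\mathbf S_qy=0$ is indeed equivalent to the system once $y_\sigma^{[1]}\in W^{1,1}(J)$, and the resulting $f$ lies in $\mathscr D_2(\mathbf S_q)$ because $(y_\sigma^{[1]})'+\sigma^2y=-\sigma y_\sigma^{[1]}\in L^\infty(J)\subset L^2(J)$. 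The one thing your argument does not deliver, and which the cited reference (and the SPPS recursion at the end of this section) does, is an explicit construction of $f$; your choice of $c_0$ is purely existential.
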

	\begin{proof}
		See \cite{minedeltas}, Prop. 24 and Th. 26, for the proof of existence and for a method of construction.
	\end{proof}
	
	We say that a distribution $q\in W^{-1,2}(J)$ is {\it real-valued} if $\operatorname{Re}(q|\phi)_{W_0^{1,2}(J)}=(q|\operatorname{Re}\phi)_{W_0^{1,2}(J)}$ for all $\phi\in W_0^{1,2}(J)$. Note that $q$ is real-valued iff it admits at least one real-valued antiderivative $\sigma$.

	\begin{theorem}\label{Th:realvaluedpot}
		Every real-valued distributional potential $q$ is admissible.
	\end{theorem}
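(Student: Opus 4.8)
The plan is to construct $f$ directly as a complex combination of two \emph{real} solutions, exploiting that a real-valued potential admits a real antiderivative, together with the uniqueness of the Cauchy problem from \cite{shkalikov1} and the constancy of the $\sigma$-Wronskian of two solutions of the same equation.

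First I would fix a real-valued antiderivative $\sigma\in L^2(J)$ of $q$ (such a $\sigma$ exists precisely because $q$ is real-valued, as noted just before the statement) and a point $x_0\in\overline{J}$. Let $u_1,u_2\in\mathscr{D}_2(\mathbf{S}_q)$ be the solutions of $\mathbf{S}_qy=0$ (the case $\lambda=0$) determined by the Cauchy data $u_1(x_0)=1$, $(u_1)_{\sigma}^{[1]}(x_0)=0$ and $u_2(x_0)=0$, $(u_2)_{\sigma}^{[1]}(x_0)=1$; existence and uniqueness are guaranteed by the results recalled in Section 2. The key first observation is that $u_1$ and $u_2$ are real-valued: writing $\mathbf{S}_qu_j=-((u_j)_{\sigma}^{[1]})'-\sigma(u_j)_{\sigma}^{[1]}-\sigma^2u_j$ and using that $\sigma$ is real, one checks that $\overline{\mathbf{S}_qu_j}=\mathbf{S}_q\overline{u_j}$ (here $(\overline{u_j})_{\sigma}^{[1]}=\overline{(u_j)_{\sigma}^{[1]}}$ because $\sigma$ is real), so $\overline{u_j}\in\mathscr{D}_2(\mathbf{S}_q)$ solves the same Cauchy problem with the same (real) data, and uniqueness forces $\overline{u_j}=u_j$. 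I would then set $f:=u_1+iu_2$; since $\mathscr{D}_2(\mathbf{S}_q)$ is linear, $f\in\mathscr{D}_2(\mathbf{S}_q)$ and $\mathbf{S}_qf=0$, while $f(x_0)=u_1(x_0)+iu_2(x_0)=1$, so the normalization in the definition of admissibility holds automatically.

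It remains to verify $f(x)\neq 0$ for every $x\in\overline{J}$. For this I would record that the $\sigma$-Wronskian of two solutions of $\mathbf{S}_qy=\lambda y$ is constant on $\overline{J}$: differentiating $W_\sigma[u_1,u_2]=u_1(u_2)_{\sigma}^{[1]}-u_2(u_1)_{\sigma}^{[1]}$ and substituting $(y_{\sigma}^{[1]})'=-\sigma y_{\sigma}^{[1]}-(\sigma^2+\lambda)y$ gives $\frac{d}{dx}W_\sigma[u_1,u_2]\equiv 0$ (this is implicit in the linear-independence criterion quoted in Section 2). Hence $W_\sigma[u_1,u_2]\equiv W_\sigma[u_1,u_2](x_0)=1$. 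If $f(x_1)=0$ for some $x_1\in\overline{J}$, then, $u_1,u_2$ being real, both $u_1(x_1)=0$ and $u_2(x_1)=0$, so $W_\sigma[u_1,u_2](x_1)=0$, a contradiction. Thus $f$ has no zeros on $\overline{J}$, and since $f$ is continuous on the compact set $\overline{J}$ it is bounded away from zero; this exhibits the desired nonvanishing solution and shows $q$ is admissible.

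I do not expect a genuine obstacle: this is the distributional analogue of the classical construction of a nonvanishing complex solution from two real ones. The only points needing care are (i) confirming that the conjugation step stays inside the Cauchy-problem framework of \cite{shkalikov1}, i.e. that $\overline{u_j}\in\mathscr{D}_2(\mathbf{S}_q)$ and has the stated quasiderivative at $x_0$, and (ii) stating explicitly the constancy of the $\sigma$-Wronskian; both are routine.
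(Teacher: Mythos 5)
Your proposal is correct and follows essentially the same route as the paper: the paper also fixes a real antiderivative $\sigma$, takes the two real solutions given by the fundamental matrix of the Dirac-type system \eqref{eq:systemsigma} with Cauchy data $(1,0)$ and $(0,1)$ at $x_0$, and sets $f=u_{1,1}+iu_{1,2}$. The only cosmetic difference is how the contradiction at a hypothetical zero $x_1$ is derived — you invoke the constancy of the $\sigma$-Wronskian, while the paper notes $\det U(x_1)=0$ and uses uniqueness for the system to contradict the linear independence of the columns — which are two phrasings of the same fact.
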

	\begin{proof}
		Let $\sigma\in L^2(J)$ be a real-valued antiderivative of $q$. Note that Eq. \eqref{eq:equationoff} can be rewritten as a Dirac-type system as follows: Let $u_1=y$ and $u_2=y_{\sigma}^{[1]}$. A straightforward computation shows that $(u_1,u_2)^T$ satisfies the system 
		\begin{equation}\label{eq:systemsigma}
			\begin{pmatrix}
				0 & 1\\
				-1 & 0
			\end{pmatrix}\begin{pmatrix}
				u'_1\\
				u'_2
			\end{pmatrix}= -\begin{pmatrix}
				\sigma^2 & \sigma \\
				\sigma & 1
			\end{pmatrix}\begin{pmatrix}
				u_1\\
				u_2
			\end{pmatrix}.
		\end{equation}
		Denote $\Sigma=\begin{pmatrix}
			\sigma^2 & \sigma \\
			\sigma & 1
		\end{pmatrix}$ and $B=	\begin{pmatrix}
			0 & 1\\
			-1 & 0
		\end{pmatrix}$. Let $U=(U_1 U_2)=\begin{pmatrix}
			u_{1,1} & u_{1,2} \\
			u_{2,1} & u_{2,2}
		\end{pmatrix}$ be the fundamental matrix solution of $BU'=-\Sigma U$ subject to the initial condition $U(x_0) = I_{2\times 2}$ for some $x_0 \in \overline{J}$. Consider
		\begin{equation}
			f=u_{1,1}+iu_{1,2}.
		\end{equation}
		Note that that $f'=f_{\sigma}^{[1]}+\sigma f= u_{2,1}+iu_{2,2}+ \sigma (u_{1,1}+i u_{1,2})\in L^2(J)$ (because $u_{i,j}\in AC(\overline{J})$), so $f\in W^{1,2}(J)$ and $f_{\sigma}^{[1]}= u_{2,1}+iu_{2,2}\in AC(J)$. Since $U$ is solution of the system, by construction $f\in \mathscr{D}_2(\mathbf{S}_q)$ and satisfies $\mathbf{S}_qf=0$. Now, suppose that there exists a point $x_1\in \overline{J}$ such that $f(x_1)=0$. Since $\sigma$ is real-valued, it follows that $U$ is real valued and then $u_{1,1}(x_1)=u_{1,2}(x_1)=0$. Consequently, $\operatorname{det}U(x_1)=0$ and there exists $\alpha\in \mathbb{R}$ such that $U_1(x_1)=\alpha U_2(x_1)$. Define $v := U_1 - \alpha U_2$. Then $v$ satisfies the homogeneous system 
\eqref{eq:systemsigma} and $v(x_1) = 0$, which implies $v \equiv 0$. 
Hence $U_1 \equiv \alpha U_2$, contradicting the linear independence 
of the columns $U_1$ and $U_2$. Therefore, $f$ has no zeros in $\overline{J}$, 
and $f(x_0) = u_{1,1}(x_0) + i u_{1,2}(x_0) = 1$ is the desired 
nonvanishing solution
	\end{proof}
	
	For complex-valued potentials, we extend several techniques from 
\cite{camporesi, sppsnelson, spps} in order to establish the existence 
of a nonvanishing solution $f$
	
	\begin{theorem}
		If the potential $q$ possesses a continuous antiderivative $\sigma$ on $\overline{J}$, then $q$ is admissible.
	\end{theorem}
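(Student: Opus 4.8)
The plan is to reduce $\mathbf{S}_q f=0$ to a first-order linear system in $(f,f_\sigma^{[1]})$ whose coefficients are \emph{continuous} — this is the only place the hypothesis on $\sigma$ is used — exploit the resulting $C^1$-regularity of solutions, and then choose initial data avoiding a Lebesgue-null ``bad set'' of data for which the corresponding solution acquires a zero in $\overline J$.

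First I would, exactly as in the proof of Theorem~\ref{Th:realvaluedpot}, set $u_1=y$, $u_2=y_\sigma^{[1]}$ and rewrite \eqref{eq:equationoff} as
\[
\begin{pmatrix} u_1 \\ u_2 \end{pmatrix}'=\begin{pmatrix} \sigma & 1 \\ -\sigma^2 & -\sigma \end{pmatrix}\begin{pmatrix} u_1 \\ u_2 \end{pmatrix}.
\]
Since $\sigma\in C(\overline J)$, the coefficient matrix is continuous on $\overline J$, so the Cauchy problem for this system is uniquely and globally solvable on $\overline J$ with solutions of class $C^1(\overline J;\mathbb{C}^2)$. For any such solution, $y=u_1\in C^1(\overline J)\subset W^{1,2}(J)$, $y_\sigma^{[1]}=u_2\in C^1(\overline J)\subset W^{1,1}(J)$, and $(y_\sigma^{[1]})'+\sigma^2 y=-\sigma y_\sigma^{[1]}\in C(\overline J)\subset L^2(J)$; hence by the proposition characterizing $L^2$-regularity, $y\in\mathscr{D}_2(\mathbf{S}_q)$, and substituting the system back into $\mathbf{S}_q y=-(y_\sigma^{[1]})'-\sigma y_\sigma^{[1]}-\sigma^2 y$ gives $\mathbf{S}_q y=0$. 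Thus every solution of the system yields an element of $\mathscr{D}_2(\mathbf{S}_q)$ solving $\mathbf{S}_q y=0$, and only non-vanishing remains to be arranged.

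Next I would fix $x_0=\inf J$ and let $\Phi=(u_{i,j})_{i,j=1}^{2}$ be the fundamental matrix of the system with $\Phi(x_0)=I_{2\times 2}$. The trace of the coefficient matrix is $\sigma-\sigma=0$, so Liouville's formula gives $\det\Phi\equiv 1$; in particular the first row satisfies $(u_{1,1}(x),u_{1,2}(x))\neq(0,0)$ for every $x\in\overline J$, and $u_{1,1},u_{1,2}\in C^1(\overline J)$. For $c\in\mathbb{C}$ put $f_c:=u_{1,1}+c\,u_{1,2}$, a solution of \eqref{eq:equationoff} with $f_c(x_0)=u_{1,1}(x_0)+c\,u_{1,2}(x_0)=1$. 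If $u_{1,2}(x)\neq 0$, then $f_c(x)=0$ forces $c=-u_{1,1}(x)/u_{1,2}(x)$; if $u_{1,2}(x)=0$, then $f_c(x)=u_{1,1}(x)\neq 0$ for every $c$. Hence the set of bad parameters
\[
B:=\{\, -u_{1,1}(x)/u_{1,2}(x) \;:\; x\in\overline J,\ u_{1,2}(x)\neq 0 \,\}\subset\mathbb{C}
\]
contains every $c$ for which $f_c$ has a zero in $\overline J$. Now $\{x\in\overline J:\ u_{1,2}(x)\neq 0\}$ is a countable union of intervals, on each of which $x\mapsto -u_{1,1}(x)/u_{1,2}(x)$ is $C^1$, hence locally Lipschitz; therefore $B$ is a countable union of Lipschitz images of intervals in $\mathbb{R}^2\cong\mathbb{C}$, so $B$ has two-dimensional Lebesgue measure zero. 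Picking any $c\in\mathbb{C}\setminus B$, the function $f:=u_{1,1}+c\,u_{1,2}$ lies in $\mathscr{D}_2(\mathbf{S}_q)$, solves $\mathbf{S}_q f=0$, satisfies $f(x_0)=1$, and has no zero in $\overline J$; thus $q$ is admissible.

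The delicate point, and the reason ``continuous antiderivative'' is assumed, is the $C^1$-regularity step: only continuity of $\sigma$ makes the coefficient matrix continuous and hence $u_{1,1},u_{1,2}\in C^1$, which in turn forces $B$ to be a rectifiable (null) curve rather than a possibly thick set. If $\sigma$ were merely $L^2$, the columns of $\Phi$ would only be $W^{1,1}$ and this selection argument would fail — consistent with the fact that the general $W^{-1,2}$ case is handled separately in Theorem~\ref{Th:realvaluedpot} via the real-valued reduction. One could equivalently package the selection projectively, observing that $x\mapsto[-u_{1,2}(x):u_{1,1}(x)]$ traces a $C^1$, hence null, curve in $\mathbb{CP}^1$ and choosing $c$ in any omitted direction; the one-parameter version above has the minor advantage of delivering the normalization $f(x_0)=1$ automatically, as in \cite{spps}.
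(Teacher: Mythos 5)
Your proof is correct and follows essentially the same route as the paper: reduce to the first-order system with continuous coefficients, observe that $(u_{1,1},u_{1,2})$ is $C^1(\overline{J})$ and never vanishes simultaneously, and then pick a linear combination whose coefficient avoids the image of a $C^1$ curve in a two-real-dimensional parameter space, which is a null set. The only difference is in packaging: the paper works projectively in $\mathbb{CP}^1$ and invokes Sard's theorem (via Camporesi's Prop.\ 2.2), whereas you work in the affine chart $c\mapsto u_{1,1}+c\,u_{1,2}$ (which conveniently yields $f(x_0)=1$) and justify the null-set claim by the elementary fact that Lipschitz images of intervals have two-dimensional Lebesgue measure zero --- the same device the paper itself uses in the subsequent theorem for $\sigma\in L^p$, $p>2$.
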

	\begin{proof}
		As in the proof of  Theorem \ref{Th:realvaluedpot}, the components $u_{1,1}$ and $u_{1,2}$ of the fundamental matrix solution of \eqref{eq:systemsigma} never vanish simultaneously. Since $\sigma\in C(\overline{J})$, we have $u_{1,1},u_{1,2}\in C^1(\overline{J})$. Let $\mathbb{CP}^1$ be the complex projective line, i.e., the quotient of $\mathbb{C}^2\setminus\{(0,0)\}$ under the action of $\mathbb{C}^{*}:=\mathbb{C}\setminus\{0\}$, and let $[a:b]$ denote the equivalence class of the pair $(a,b)$.
		
		Consider the map $F: \overline{J}\rightarrow \mathbb{CP}^1$ given by $F(x)=[u(x): v(x)]$ for $x\in \overline{J}$. The map is well defined and of class $C^1$. According to \cite[Prop. 2.2]{camporesi}, $F$ is never surjective, and the proof is that Sard's theorem implies that $F(\overline{J})$ has measure zero.
		
		Suppose that $[c_1:c_2]\in \mathbb{CP}^1$ is such that $c_1u_{1,1}(\xi)+c_2u_{1,2}(\xi)=0$ for some $\xi\in \overline{J}$. Hence $\begin{vmatrix}
			u_{1,1}(\xi) & u_{1,2}(\xi) \\
			-c_2 & c_1
		\end{vmatrix}=0$, that is, $(u_{1,1}(\xi),u_{1,2}(\xi))$ and $(-c_2,c_2)$ are proportionals, which implies that $[-c_2:c_1]\in F(\overline{J})$. Consequently, the set $C=\{[-c_2:c_1]\in \mathbb{CP}^1\, | \,\exists \xi \in \overline{J} : c_1u_{1,1}(\xi)+c_2u_{1,2}(\xi)=0 \}$ is contained in $F(\overline{J})$, and therefore has measure zero. Hence there exists a pair $[c_1:c_2]\in \mathbb{CP}^1\setminus C$ such that $f=c_1u_{1,2}+c_2u_{1,2}$ has no zeros on $\overline{J}$. Therefore, $f$ is a nonvanishing solution of \eqref{eq:equationoff}. 
	\end{proof}
	\newline 
	
	We can generalize some ideas from the proof of the previous theorems to the general case $q\in W^{-1,2}(J)$ complex-valued. Recall that for $1\leq p\leq \infty$, $W^{1,p}(J)$ is an algebra \cite[Cor. 8.10]{brezis}. Consequently, $|u|^2=u\overline{u}\in W^{1,p}(J)$ for every $u\in W^{1,p}(J)$. Moreover, if $v\in W^{1,p}(J)$ is such that $v(x)\neq 0$ for all $x\in \overline{J}$, then $\frac{1}{v}\in W^{1,p}(J)$.
	\newline 
	
	\begin{proof}[Proof of Theorem \ref{Th:maintheoremadmissible}]
		Denote $u=u_{1,1}$ and $v=u_{1,2}$. Note that  $u'=u_{\sigma}^{[1]}+\sigma u \in L^2(J)$, so $u\in W^{1,2}(J)$. The same argument applies to $v$. Since $u$ and $v$ never vanish simultaneously, we may consider the well-defined map $F(x)=[u(x):v(x)]\in \mathbb{CP}^1$, $x\in \overline{J}$. We recall that $\mathbb{CP}^1$ is homeomorphic to the Riemann sphere $\mathbb{S}^2$ via the map
		\begin{equation}\label{eq:homeomorphismCPS}
			\mathbb{CP}^1\ni [z:w]\mapsto \frac{1}{|z|^2+|w|^2}\left(2\operatorname{Re}(z\overline{w}),2\operatorname{Im}(z\overline{w}), |z|^2-|w|^2\right)\in \mathbb{S}^2.
		\end{equation}
		Composing $F$ with the homeomorphism \eqref{eq:homeomorphismCPS} yields a path $\gamma: \overline{J}\rightarrow \mathbb{S}^2$ given by
		\begin{equation}\label{eq:auxiliarpath}
			\gamma(x):=\frac{1}{|u(x)|^2+|v(x)|^2}\left(2\operatorname{Re}(u(x)\overline{v(x)}),2\operatorname{Im}(u(x)\overline{v(x)}), |u(x)|^2-|v(x)|^2\right).
		\end{equation}
		Since the denominator does not vanish, the function $\gamma$ is continuous on $\overline{J}$, and since $u,v\in W^{1,p}(J)$, it follows that $2\operatorname{Re}(u\overline{v}), \operatorname{Im}(u\overline{v}), |u|^2\pm |v|^2\in W^{1,2}(J)$.  The function $|u|^{2}+|v|^{2}$ is also in $W^{1,2}(J)$ and, by assumption, does not vanish on $\overline{J}$, hence its inverse belongs to $W^{1,2}(J)$. Consequently, each component of the path $\gamma$ belongs to $W^{1,2}(J)$. In particular, $\int_J|\gamma'(x)|dx\leq |J|^{\frac{1}{2}}\left(\int_J|\gamma'(x)|^2dx\right)^{\frac{1}{2}}<\infty$ and $\gamma$ is rectificable. This implies that the  1-dimensional Hausdorff measure of $\gamma(\overline{J})$ is at most $|\gamma(\overline{J})|$ (see \cite[Th. 13]{warner}). Consequently, the Hausdorff dimension of $\gamma(\overline{J})$ is at most $1$ (see \cite[Sec. 2.2]{falconer}). Since the Hausdorff dimension of $\mathbb{S}^2$ is $2$, it follows that $\mathbb{S}^2\setminus \gamma(\overline{J})\neq \emptyset$. Thus, we may choose a point $(p_1,p_2,p_3)\in \mathbb{S}^2\setminus \gamma(\overline{J})$. Let $[c_1:c_2]\in \mathbb{CP}^1$ be its preimage under the homeomorphism  \eqref{eq:homeomorphismCPS}. Hence $[c_1:c_2]\not\in F(\overline{J})$ and, in particular, $c_2u_1-c_1u_2$ does not vanish on $\overline{J}$, from where we obtain the solution $f$.
	\end{proof}
	\begin{corollary}
		If $q$ is defined by a complex Radon measure on $\overline{J}$, then $q$ is admissible.
	\end{corollary}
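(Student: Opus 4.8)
The plan is to deduce this corollary directly from the previous theorem by exhibiting a bounded antiderivative of $q$. Let $\mu$ denote the complex Radon measure on $\overline{J}$ that defines $q$, so that $(q\mid\phi)_{W_0^{1,2}(J)}=\int_{\overline{J}}\phi\,d\mu$ for every $\phi\in C_0^{\infty}(J)$. Writing $a=\inf J$, I would set $\sigma(x):=\mu\bigl((a,x]\bigr)$ for $x\in\overline{J}$, the cumulative distribution function of $\mu$. The key observation is that $\sigma$ is a bounded antiderivative of $q$, so the hypothesis of the preceding theorem holds with $p=\infty$.

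First I would check that $\sigma\in L^{\infty}(J)$: since $\mu$ is a finite complex measure, $|\sigma(x)|\le|\mu|(\overline{J})<\infty$ for all $x$, so $\sigma$ is bounded, and as $J$ is a bounded interval this yields $\sigma\in L^{\infty}(J)\subset L^{2}(J)$ --- which in particular reconfirms, via the representation of $W^{-1,2}(J)$ recalled in Section~2, that $q\in W^{-1,2}(J)$. Next I would verify the distributional identity $\sigma'=q$. This is the standard Stieltjes integration-by-parts formula: for $\phi\in C_0^{\infty}(J)$, Fubini's theorem gives
\[
\int_{J}\sigma(x)\phi'(x)\,dx=\int_{\overline{J}}\Bigl(\int_{\{x\in J:\ x\ge t\}}\phi'(x)\,dx\Bigr)\,d\mu(t)=-\int_{\overline{J}}\phi(t)\,d\mu(t),
\]
because the inner integral equals $-\phi(t)$, the function $\phi$ having compact support in $J$. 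Hence $(q\mid\phi)_{W_0^{1,2}(J)}=-\int_{J}\sigma\phi'=\int_{\overline{J}}\phi\,d\mu$, that is, $\sigma'=q$ in the distributional sense.

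With this bounded antiderivative in hand, applying the previous theorem with $p=\infty$ immediately yields that $q$ is admissible. There is no genuinely hard step here: the only point requiring a little care is the precise interpretation of the phrase ``$q$ is defined by a complex Radon measure'' together with the identity $\sigma'=q$. In particular, replacing the half-open interval $(a,x]$ by $[a,x]$ in the definition of $\sigma$ alters it only by the constant $\mu(\{a\})$ at the single point $x=a$, hence not as an element of $L^{\infty}(J)$, and the endpoints of $J$ are irrelevant since the test functions vanish there.
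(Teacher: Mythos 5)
Your proof is correct and reaches the same reduction as the paper: both arguments exhibit an antiderivative $\sigma\in L^{\infty}(J)$ of $q$ and then invoke the preceding theorem with $p=\infty$. The only difference is how that antiderivative is produced. The paper extends $q$ to a continuous functional on $C(\overline{J})$, applies the Riesz representation theorem to write $(q|\phi)=\int_J\phi\,d\sigma$ with $\sigma\in BV(\overline{J})\subset L^{\infty}(J)$, and then quotes the integration-by-parts formula for Riemann--Stieltjes integrals to obtain $(q|\phi)=-\int_J\sigma\phi'$. You instead take the cumulative distribution function $\sigma(x)=\mu((a,x])$ directly and verify $\sigma'=q$ by Fubini; this is more self-contained and avoids the detour through Riesz representation and BV theory, at the modest cost of justifying the Fubini step, which is immediate here since $|\mu|(\overline{J})<\infty$ and $\phi'$ is bounded on the bounded interval $J$. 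One small slip in your closing remark: $\mu([a,x])$ and $\mu((a,x])$ differ by the constant $\mu(\{a\})$ for \emph{every} $x$, not only at the single point $x=a$; this is still harmless, because an antiderivative is only determined up to an additive constant, so either choice serves equally well.
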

	\begin{proof}
		Suppose that $(q|\phi)_{W_0^{1,2}(J)}=\int_J\phi d\mu$, where $\mu$ is a complex Radon measure on $\overline{J}$. In this case, $q$ extends to a continuous functional on $C(\overline{J})$, so the Riesz representation theorem implies that $q$ is given in terms of a Riemann-Stieltjes integral $(q|\phi)=\int_J\phi d\sigma$, where $\sigma \in BV(\overline{J})$ (see \cite[Th. 4.4-1]{kreyszig}). Since $\phi \in W^{1,2}_0(J)$ vanish in $\partial J$, we have that $(q|\phi)=-\int_J\phi'\sigma$ (see \cite[Th. 3. 36]{folland}), and $q$ is the distributional derivative of $\sigma \in L^{2}(J)$.
	\end{proof}
	\newline
	
	We end this section by proposing a method to construct a solution $f$, based on the SPPS method for Dirac-type systems of the form $BY'+P(x)Y=\lambda R(x)Y$, developed in \cite{sppsnelson}. In our case, $P\equiv 0_{2\times 2}$, $\lambda=-1$ and $R=\Sigma$.  Applying the method described in \cite[Sec. 2]{sppsnelson}, we obtain that the solutions $u_{1,1}$ and $u_{1,2}$ admit the series representations:
	\begin{equation}
		u_{1,1}=\sum_{k=0}^{\infty}\frac{(-1)^k}{k!}\tilde{X}^{(k)},\qquad u_{1,2}=\sum_{k=0}^{\infty}\frac{(-1)^k}{k!}X^{(k)}
	\end{equation}
	where the functions $\{X^{(k)}\}_{k=0}^{\infty}$ and $\{\tilde{X}^{(k)}\}_{k=0}^{\infty}$ are defined recursively as follows: For $\{X^{(k)}\}_{k=0}^{\infty}$, define $X^{(0)}\equiv 0$ and $Y^{(0)}\equiv 1$, and for $k\geq 1$,
	\begin{align}
		X^{(k)}(x)&=-k\int_{x_0}^x(\sigma(s)X^{(k-1)}(s)+Y^{(k-1)}(s))ds, \nonumber\\
		Y^{(k)}(x)&=k\int_{x_0}^x(\sigma^2(s)X^{(k-1)}(s)+\sigma(s)Y^{(k-1)}(s))ds. \label{eq:interalsXsystem}
	\end{align}
	For $\{\tilde{X}^{(k)}\}_{k=0}^{\infty}$, we set $\tilde{X}^{(0)}\equiv 1$ and $\tilde{Y}^{(0)}\equiv 0$ and apply the same recursion \eqref{eq:interalsXsystem}.  The solution $f$ is obtained by choosing suitable constants $(c_1,c_2)\in \mathbb{C}^2\setminus \{(0,0)\}$ such that $f=c_1u_{1,1}+c_2u_{1,2}$ does not have zeros on $\overline{J}$. When $\sigma$ is real-valued, it suffices to choose $c_1=1$ and $c_2=i$.

	\section{Approximation}
	We will now establish some approximation results for the kernel $K_f$. The following lemma will be useful for later results.
	\begin{lemma}\label{Lemma:convergencefn}
		Let $f\in W^{1,2}(J)$ be a non-vanishing function such that $f(x_0)=1$ for some $x_0\in \overline{J}$. There exists a sequence $\{f_n\}\subset C^{\infty}(\overline{J})$ of non-vanishing functions with $f_n(x_0)=1$ and such that
		\[
		f_n, \frac{1}{f_n}\rightarrow f,\frac{1}{f} \text{ in } W^{1, 2}(J), \qquad  \sigma_{f_n}\rightarrow \sigma_f \; \text{in } L^2(J), \;\;\; \text{and }\;\;\; q_{f_n}\rightarrow q_f \text{ in } W^{-1,2}(J).
		\] 
	\end{lemma}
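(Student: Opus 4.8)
The plan is to produce the approximating sequence by smoothing the logarithmic derivative $g := f'/f \in L^2(J)$ rather than $f$ itself, since all the quantities in the statement ($f$, $1/f$, $\sigma_f$, $q_f$) are expressible through $g$. Fix $x_0 \in \overline{J}$ with $f(x_0)=1$. Choose $g_n \in C^{\infty}(\overline{J})$ with $g_n \to g$ in $L^2(J)$; this is possible because $C^{\infty}(\overline{J})$ is dense in $L^2(J)$ on a bounded interval. Define
\[
f_n(x) := \exp\left(\int_{x_0}^x g_n(t)\,dt\right).
\]
Then $f_n \in C^{\infty}(\overline{J})$, $f_n(x_0)=1$, $f_n$ is nowhere zero, and $f_n'/f_n = g_n$, so $1/f_n = \exp\left(-\int_{x_0}^x g_n\right)$ is likewise smooth and nonvanishing. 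The recipe \eqref{eq:sigmaf} gives $\sigma_{f_n}(x) = g_n(x) + \int_{x_0}^x g_n^2(t)\,dt$, and $q_{f_n}$ is the distributional derivative of $\sigma_{f_n}$ (equivalently, $q_{f_n} = f_n(1/f_n)'' $ written via \eqref{eq:distributionf}).

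The key steps are then the following convergences. First, $\int_{x_0}^{x} g_n \to \int_{x_0}^{x} g$ uniformly on $\overline{J}$, by Cauchy--Schwarz: $\left|\int_{x_0}^x (g_n-g)\right| \le \sqrt{|J|}\,\|g_n - g\|_{L^2(J)} \to 0$. Since $\exp$ is Lipschitz on the bounded range of these antiderivatives, $f_n \to f$ and $1/f_n \to 1/f$ uniformly on $\overline{J}$, hence in $L^2(J)$. For the $W^{1,2}$-convergence of $f_n$, note $f_n' = g_n f_n$ and $f' = g f$ (the latter by definition of $g$), so
\[
\|f_n' - f'\|_{L^2(J)} \le \|g_n(f_n - f)\|_{L^2(J)} + \|(g_n - g)f\|_{L^2(J)} \le \|g_n\|_{L^2}\,\|f_n - f\|_{L^\infty} + \|g_n - g\|_{L^2}\,\|f\|_{L^\infty},
\]
which tends to $0$ because $\|g_n\|_{L^2}$ is bounded and $\|f_n - f\|_{L^\infty}\to 0$; the same estimate with $f$ replaced by $1/f$ and $g$ by $-g$ gives $1/f_n \to 1/f$ in $W^{1,2}(J)$. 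Next, $g_n^2 \to g^2$ in $L^1(J)$ (since $\|g_n^2 - g^2\|_{L^1} \le \|g_n - g\|_{L^2}\|g_n + g\|_{L^2} \to 0$), so $\int_{x_0}^x g_n^2 \to \int_{x_0}^x g^2$ uniformly, and therefore $\sigma_{f_n} = g_n + \int_{x_0}^{\cdot} g_n^2 \to g + \int_{x_0}^{\cdot} g^2 = \sigma_f$ in $L^2(J)$. Finally, $q_{f_n} - q_f = (\sigma_{f_n} - \sigma_f)'$ as distributions, so for $\phi \in W_0^{1,2}(J)$ we have $(q_{f_n} - q_f \mid \phi) = -\int_J (\sigma_{f_n}-\sigma_f)\phi'$, whence $\|q_{f_n} - q_f\|_{W^{-1,2}(J)} \le \|\sigma_{f_n} - \sigma_f\|_{L^2(J)} \to 0$.

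The main obstacle is essentially bookkeeping: one must verify that every one of the four asserted convergences follows from the single input $g_n \to g$ in $L^2$, and in particular handle the nonlinear dependencies ($\exp$ and the square $g^2$) cleanly. There is no genuine analytic difficulty once one smooths $g$ instead of $f$; the potential pitfall would be to smooth $f$ directly, in which case controlling $f_n'/f_n$ and the nonvanishing property becomes awkward. I would also remark that the normalization $f_n(x_0)=1$ and the nonvanishing of $f_n$ are automatic from the exponential representation, which is the reason for choosing this construction.
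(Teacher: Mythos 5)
Your proposal is correct and follows essentially the same route as the paper: the paper likewise mollifies the logarithmic derivative $\tau_f=f'/f$ in $L^2(J)$, sets $f_n=\exp\bigl(\int_{x_0}^x\psi_n\bigr)$, and deduces the four convergences from $\psi_n\to\tau_f$ in $L^2$, including the same Cauchy--Schwarz estimate for $\int_{x_0}^x(\psi_n^2-\tau_f^2)$ and the same duality bound $\|q_{f_n}-q_f\|_{W^{-1,2}}\le\|\sigma_{f_n}-\sigma_f\|_{L^2}$. Your write-up is in fact slightly more detailed than the paper's on the $W^{1,2}$-convergence of $f_n$ and $1/f_n$, which the paper dismisses as clear.
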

	\begin{proof}
		Let $\tau_f:=\frac{f'}{f}\in L^2(J)$. Take a sequence $\{\psi_n\}\subset C^{\infty}(\overline{J})$ such that $\psi_n\rightarrow \tau_f$ in $L^2(J)$. Define $f_n(x)=e^{\int_{x_0}^x\psi_n(s)}$. Then each $f_n\in C^{\infty}(\overline{J})$ is non-vanish and satisfies $f_n(x_0)=1$. Since $f(x)=e^{\int_{x_0}^{x}\tau_f(s)ds}$, it is clear that $f_n\rightarrow f$ and $\frac{1}{f_n}\rightarrow \frac{1}{f}$ in $W^{1,2}(J)$.  For the convergence of $\sigma_{f_n}$, observe that
		\begin{align*}
			\int_J\left|\int_{x_0}^x(\psi_n^2(s)-\tau_f^2(s))ds\right|^2dx \leq & \int_J\left(\int_{x_0}^{x}|\psi_n(s)-\tau_f(s)||\psi_n(s)+\tau_f(s)|ds\right)dx\\
			\leq &|J|\|\psi_n-\tau_f\|_{L^2(J)}\|\psi_n+\tau_f\|_{L^2(J)}\leq |J|M\|\phi_n-\tau_f\|_{L^2(J)},
		\end{align*}
		where $M=\sup_{n\in \mathbb{N}}\|\psi_n\|_{L^2(J)}+\|\tau_f\|_{L^2(J)}$. Thus, $\int_{x_0}^x\psi_n^2 \rightarrow \int_{x_0}^x\tau_f^2$ in $L^2(J)$. By \eqref{eq:sigmaf}, $\sigma_{f_n}\rightarrow \sigma_f$ in $L^2(J)$.
		
		Finally, given $\phi\in W^{1,2}(J)$, we have
		\[
		|(q_{f_n}-q_f|\phi)_{W^{1,2}_0(J)}|\leq \int_J|\sigma_{f_n}-\sigma_f||\phi'|\leq \|\sigma_{f_n}-\sigma_f\|_{L^2(J)}\|\phi\|_{W_0^{1,2}(J)}.
		\]
		Consequently, $\|q_{f_n}-q_f\|_{W^{-1,2}(J)}\leq \|\sigma_{f_n}-\sigma_f\|_{L^2(J)}\rightarrow 0$ as $n\rightarrow \infty$.
	\end{proof}

	\begin{lemma}\label{lemma:convergenceformalpowers}
		If $\{f_n\}\subset C^{\infty}(\overline{J})$ is a sequence satisfying the conditions of Lemma \ref{Lemma:convergencefn}, then
		\begin{equation}\label{eq:convergenceformalpowers}
			\varphi_{f_n^{(-1)^j}}^{(k)}\rightarrow \varphi_{f^{(-1)^j}}^{(k)} \;\; \text{uniformly on } \overline{J} \;\; \text{for all } k\in \mathbb{N}_0,\; j=0,1.
		\end{equation}
	\end{lemma}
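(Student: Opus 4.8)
The plan is to argue by induction on $k$, treating $j=0$ and $j=1$ simultaneously: the hypotheses of Lemma~\ref{Lemma:convergencefn} are symmetric under $f\leftrightarrow\frac1f$, since both $f_n\to f$ and $\frac{1}{f_n}\to\frac1f$ hold in $W^{1,2}(J)$, hence---using the embedding $W^{1,2}(J)\hookrightarrow C(\overline J)$---uniformly on $\overline J$, while the normalizations $f_n(x_0)=\frac{1}{f_n(x_0)}=1$ are preserved. So it suffices to prove the following: if $h_n,h\in C(\overline J)$ are non-vanishing with $h_n(x_0)=h(x_0)=1$ and $h_n\to h$, $\frac{1}{h_n}\to\frac1h$ uniformly on $\overline J$, then $\varphi_{h_n}^{(k)}\to\varphi_h^{(k)}$ uniformly on $\overline J$ for every $k\in\mathbb{N}_0$; one then applies this with $h=f$ and with $h=\frac1f$.

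The base cases are immediate from $\varphi_h^{(0)}=h$ and $\varphi_h^{(1)}(x)=h(x)\int_{x_0}^x h^{-2}(t)\,dt$, together with the fact that $\frac{1}{h_n^2}\to\frac{1}{h^2}$ uniformly (a product of uniformly bounded, uniformly convergent sequences) and $|J|<\infty$. For the inductive step I would first record the auxiliary claim that the integral operator $\mathbf{R}_h$ of \eqref{eq:operatorR} depends continuously on its data, namely: if $h_n\to h$, $\frac{1}{h_n}\to\frac1h$ and $g_n\to g$ all uniformly on $\overline J$, then $\mathbf{R}_{h_n}g_n\to\mathbf{R}_h g$ uniformly on $\overline J$. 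This is proved by peeling the nested integrals in \eqref{eq:operatorR} one layer at a time: the innermost integral $t\mapsto\int_{x_0}^t h_n g_n$ converges uniformly because $\|h_ng_n-hg\|_{C(\overline J)}\to0$ and $J$ is bounded; multiplying by $\frac{1}{h_n^2}\to\frac{1}{h^2}$ and integrating once more again yields uniform convergence; and the final multiplication by $h_n\to h$ preserves it. Taking $g_n=\varphi_{h_n}^{(k-2)}$---which by the induction hypothesis converges uniformly, hence is bounded in $C(\overline J)$ uniformly in $n$---and invoking the recursion $\varphi_{h_n}^{(k)}=k(k-1)\mathbf{R}_{h_n}\varphi_{h_n}^{(k-2)}$ closes the induction. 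One could instead use the representation $\varphi_h^{(k)}=k!\,h\,\tilde X^{(k)}$ or $k!\,h\,X^{(k)}$ with the estimates \eqref{eq:estimatestildeX}, but the direct induction via $\mathbf{R}_h$ is cleaner.

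The argument is largely bookkeeping, and I do not anticipate a genuine obstacle. The only point that needs a moment's care is the auxiliary continuity statement for $\mathbf{R}_h$; even there the sole mechanism is that uniform convergence on a bounded interval is stable under multiplication by a uniformly bounded, uniformly convergent sequence and under integration from $x_0$. The mild subtlety is keeping straight that what \eqref{eq:operatorR} consumes is precisely control of both $h_n$ and $h_n^{-2}$ in $C(\overline J)$, i.e.\ of $h_n$ and $\frac{1}{h_n}$, and that exactly these are furnished by Lemma~\ref{Lemma:convergencefn} for $h=f$ and, by symmetry, for $h=\frac1f$.
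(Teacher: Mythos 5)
Your proof is correct and takes essentially the same route as the paper: the paper likewise observes that the hypotheses yield uniform convergence of $f_n^{\pm 1}$ on $\overline{J}$ and then defers the remaining work to the induction over the recursive integrals carried out in the cited reference (Th.~10 of \cite{mineimpedance3}), which is precisely the induction you make explicit via the continuity of $\mathbf{R}_h$ with respect to uniform convergence of $h_n$ and $h_n^{-1}$. Your self-contained argument is a valid substitute for that citation.
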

	\begin{proof}
		Since $f_n^{(-1)^j}\rightarrow f^{(-1)^j}$ uniformly on $\overline{J}$ for $j=0,1$, the proof proceeds identically to the argument given in \cite[Th. 10]{mineimpedance3}
	\end{proof}
	\begin{proposition}\label{prop:strongconvergence}
		If $\{f_n\}\subset C^{\infty}[0,\ell]$ is a sequence satisfying the conditions of Lemma \ref{Lemma:convergencefn}, then $\mathbf{T}_{f_n}\rightarrow \mathbf{T}_f$ in the strong operator topology of $\mathcal{B}(L^2(-\ell, \ell),L^2(0,\ell))$. The same conclusion holds for the strong operator topology of $\mathcal{B}(C[-\ell,\ell],C[0,\ell])$.
	\end{proposition}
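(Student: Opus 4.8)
The plan is to invoke the standard criterion for convergence in the strong operator topology: a uniformly bounded sequence of operators that converges pointwise on a total set converges strongly. Here the total set is $\mathcal{P}[-\ell,\ell]$, on which the mapping property \eqref{eq:mappingprop} together with Lemma \ref{lemma:convergenceformalpowers} delivers the pointwise convergence almost immediately.

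First I would establish a uniform bound $\sup_n \|\mathbf{T}_{f_n}\| < \infty$, simultaneously in $\mathcal{B}(L^2(-\ell,\ell),L^2(0,\ell))$ and in $\mathcal{B}(C[-\ell,\ell],C[0,\ell])$. The key observation is that the estimate \eqref{eq:estimateintegralK} for the kernels $G_{i,i}^{f^{(-1)^j}}$ depends on $f$ only through the quantity $d_f = \|f'/f\|_{L^2(0,\ell)}^2$, and is monotone increasing in it. From the construction in Lemma \ref{Lemma:convergencefn} one has $f_n = e^{\int_0^x \psi_n}$ with $\psi_n = f_n'/f_n \to f'/f$ in $L^2(0,\ell)$, so $d_{f_n} = \|\psi_n\|_{L^2(0,\ell)}^2$ (and likewise $d_{1/f_n} = d_{f_n}$) is bounded, say by $D$; feeding $D$ into \eqref{eq:estimateintegralK} bounds $\|G_{i,i}^{f_n}\|_{L^2(\mathcal{T}_\ell)}$ and $\|G_{i,i}^{1/f_n}\|_{L^2(\mathcal{T}_\ell)}$ uniformly in $n$, hence through the definition \eqref{eq:kernelexponential} the norms $\|K_{f_n}\|_{L^2(\mathcal{T}_\ell)}$, which yields the $L^2$ bound. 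For the sup-norm bound I would argue the same way using $\|f_n'/f_n\|_{L^1(0,\ell)} \le \sqrt{\ell}\,\|\psi_n\|_{L^2(0,\ell)}$ together with the estimate $\int_{-x}^{x}|G^{f_n}(x,t)|\,dt \le e^{\|f_n'/f_n\|_{L^1(0,\ell)}}-1$ from the proof of Theorem \ref{th:continuityofoperator}.

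Next, I would prove pointwise convergence on polynomials: by \eqref{eq:mappingprop}, $\mathbf{T}_{f_n}[x^k] = \varphi_{f_n}^{(k)}$ and $\mathbf{T}_f[x^k] = \varphi_f^{(k)}$ for every $k\in\mathbb{N}_0$, and Lemma \ref{lemma:convergenceformalpowers} gives $\varphi_{f_n}^{(k)} \to \varphi_f^{(k)}$ uniformly on $[0,\ell]$, hence in both $L^2(0,\ell)$ and $C[0,\ell]$; linearity then gives $\mathbf{T}_{f_n}p \to \mathbf{T}_f p$ for every $p \in \mathcal{P}[-\ell,\ell]$. Finally I would close with the usual $\varepsilon/3$ estimate: given $u\in L^2(-\ell,\ell)$ (respectively $u\in C[-\ell,\ell]$) and $\varepsilon>0$, choose $p\in\mathcal{P}[-\ell,\ell]$ with $\|u-p\|<\varepsilon$ and write
\[
\|\mathbf{T}_{f_n}u - \mathbf{T}_f u\| \le \bigl(\sup_m\|\mathbf{T}_{f_m}\| + \|\mathbf{T}_f\|\bigr)\|u-p\| + \|\mathbf{T}_{f_n}p - \mathbf{T}_f p\|,
\]
so that $\limsup_n \|\mathbf{T}_{f_n}u - \mathbf{T}_f u\|$ is bounded by a fixed constant times $\varepsilon$; as $\varepsilon$ was arbitrary, the strong convergence follows in both topologies.

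The step I expect to be the main obstacle is the uniform boundedness: it is the only point where something beyond routine bookkeeping occurs, and it rests on recognizing that the hypotheses of Lemma \ref{Lemma:convergencefn} are precisely what controls the sequence $d_{f_n}$, after which the Bessel-type estimate \eqref{eq:estimateintegralK} and its $C$-analogue do the rest. Everything else — the polynomial convergence via \eqref{eq:mappingprop} and Lemma \ref{lemma:convergenceformalpowers}, and the density argument — is standard.
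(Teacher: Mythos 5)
Your proposal is correct and follows essentially the same route as the paper: a uniform bound on $\|K_{f_n}\|_{L^2(\mathcal{T}_\ell)}$ (and on $\int_{-x}^x|K_{f_n}(x,t)|\,dt$ for the sup-norm case) obtained from the estimate \eqref{eq:estimateintegralK} via the boundedness of $d_{f_n}=\|f_n'/f_n\|_{L^2}^2$, combined with pointwise convergence on $\mathcal{P}[-\ell,\ell]$ from the mapping property \eqref{eq:mappingprop} and Lemma \ref{lemma:convergenceformalpowers}, then density. The only cosmetic difference is that the paper invokes the standard criterion from Kreyszig where you write out the $\varepsilon$-argument explicitly.
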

	\begin{proof}
		Let $K_{f_n}(x,t)$ be the transmutation kernel associated with $f_n$. From the proof of Theorem \ref{Th:integralrepresentation1}, we obtain the estimate \eqref{eq:estimateintegralK}. Using the definition of $K_{f_n}$ \eqref{eq:kernelexponential}, we have
		\[
		\|K_{f_n}\|_{L^2(\mathcal{T}_{\ell})}^2 \leq 4\ell \left(d_n+2d_n^2(\ell^2d_n^2+\ell)e^{\ell d_n}\right),
		\]
		where $d_n=\left\|\frac{f_n'}{f_n}\right\|_{L^2(0,\ell)}^2$. Since $\frac{f_n'}{f}\rightarrow \frac{f'}{f}$ in $L^2(0,\ell)$ (by definition), we may set $D=\sup_{n\in \mathbb{N}}d_n$ and thus,
		\begin{equation}\label{eq:estimatenormskf}
			\sup_{n\in \mathbb{N}}\|K_{f_n}\|_{L^2(\mathcal{T}_{\ell})}^2\leq 4\ell \left(D+2D^2(\ell^2D^2+\ell)e^{\ell D}\right).
		\end{equation}
		It follows that the operators $\{\mathbf{T}_{f_n}\}$ are uniformly bounded in $\mathcal{B}(L^2(-\ell,\ell), L^2(0,\ell))$. On the other hand, for each $k\in \mathbb{N}_0$, Lemma \ref{lemma:convergenceformalpowers} gives
		\[
		\mathbf{T}_{f_n}[x^k]=\varphi_{f_n}^{(k)}\rightarrow \varphi_{f}^{(k)} \quad \text{in } L^2(0,\ell).
		\]
		By linearity, we conclude that $\mathbf{T}_{f_n} \rightarrow \mathbf{T}_f$ pointwise in the dense subspace $\mathcal{P}[-\ell,\ell]$ of $L^2(-\ell, \ell)$. According to \cite[Th. 4.9-6]{kreyszig}, uniformly boundedness of $\{\mathbf{T}_{f_n}\}$ yields that $\mathbf{T}_{f_n}\rightarrow \mathbf{T}_f$ in the strong operator topology of $\mathcal{B}(L^2(-\ell, \ell),L^2(0,\ell))$. 
		
		For the case $\mathcal{B}(C[-\ell,\ell], C[0,\ell])$, we use the estimate $\int_{-x}^{x}|K_{f_n}(x,t)|dt\leq 4( e^{\sqrt{\ell}d_n}-1)$. Hence $\sup_{n\in \mathbb{N}}\|\mathbf{T}_{f_n}\|_{\mathcal{B}(C[-\ell,\ell])}\leq 4e^{\sqrt{\ell}D}$, and the result follows from Lemma \ref{lemma:convergenceformalpowers}.
	\end{proof}
	
	\begin{lemma}
		The set of functions $\mathscr{E}=\{\phi(x)p(t)\chi_{(-x,x)}(t)\, |\, \phi\in C_0^{\infty}(0,\ell), p\in \mathcal{P}[-\ell,\ell]\}$ 
		is complete in $L^2(\mathcal{T}_{\ell})$.
	\end{lemma}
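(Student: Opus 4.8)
The plan is to show that the orthogonal complement of $\mathscr{E}$ in $L^2(\mathcal{T}_{\ell})$ is trivial. So I would begin by taking $g\in L^2(\mathcal{T}_{\ell})$ satisfying
$\iint_{\mathcal{T}_{\ell}}g(x,t)\phi(x)p(t)\,dt\,dx=0$
for all $\phi\in C_0^{\infty}(0,\ell)$ and all $p\in\mathcal{P}[-\ell,\ell]$ (where the factor $\chi_{(-x,x)}$ has been absorbed into the domain of the inner integral), and aim to conclude that $g=0$ a.e. on $\mathcal{T}_{\ell}$.

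First, by Fubini's theorem on the triangle $\mathcal{T}_{\ell}$, for a.e.\ $x\in(0,\ell)$ the section $g(x,\cdot)$ lies in $L^2(-x,x)$, and for each fixed polynomial $p$ the function $h_p(x):=\int_{-x}^{x}g(x,t)p(t)\,dt$ is defined for a.e.\ $x$ and belongs to $L^2(0,\ell)$, since $|h_p(x)|\leq\|g(x,\cdot)\|_{L^2(-x,x)}\|p\|_{L^2(-\ell,\ell)}$. The hypothesis then reads $\int_0^{\ell}\phi(x)h_p(x)\,dx=0$ for every $\phi\in C_0^{\infty}(0,\ell)$, so the fundamental lemma of the calculus of variations gives $h_p=0$ a.e.; that is, for each polynomial $p$ there is a null set $N_p\subset(0,\ell)$ with $\int_{-x}^{x}g(x,t)p(t)\,dt=0$ for all $x\notin N_p$.

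The main obstacle is that this exceptional set depends on $p$. To remove the dependence, I would fix a countable family $\{p_j\}_{j\geq 1}$ of polynomials with rational coefficients that is dense in $C[-\ell,\ell]$, hence dense in $L^2(-a,a)$ for every $0<a\leq\ell$. Setting $N$ to be the union of all the $N_{p_j}$ together with the null set on which $g(x,\cdot)\notin L^2(-x,x)$, one obtains a null set such that for every $x\in(0,\ell)\setminus N$ the section $g(x,\cdot)\in L^2(-x,x)$ is orthogonal to each $p_j$ in $L^2(-x,x)$; density of $\{p_j\}$ then forces $g(x,\cdot)=0$ in $L^2(-x,x)$, i.e.\ $g(x,t)=0$ for a.e.\ $t\in(-x,x)$. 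A final application of Fubini yields $g=0$ a.e.\ on $\mathcal{T}_{\ell}$, so $\mathscr{E}^{\perp}=\{0\}$ and $\operatorname{Span}\mathscr{E}$ is dense in $L^2(\mathcal{T}_{\ell})$, as desired.
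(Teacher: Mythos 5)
Your proof is correct and follows essentially the same route as the paper's: Fubini to reduce to the slice functionals $\int_{-x}^{x}g(x,t)p(t)\,dt$, the fundamental lemma of the calculus of variations to kill them for a.e.\ $x$, and density of polynomials in $L^2(-x,x)$ to conclude $g(x,\cdot)=0$. Your treatment is in fact slightly more careful than the paper's, since you explicitly handle the dependence of the exceptional null set on $p$ by passing to a countable dense family of polynomials, a point the paper's proof passes over silently.
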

	\begin{proof}
		Let $g\in L^2(\mathcal{T}_{\ell})$ and assume $\iint_{\mathcal{T}_{\ell}}g\phi p\chi_{(-x,x)} =0$ for all $\phi p\chi_{(-x,x)} \in \mathscr{E}$. Fix $p$ and define $G[p](x)=\int_{-x}^{x}g(x,t)p(t)dt$. By Fubini's theorem
		\[
		0=\iint_{\mathcal{T}_{\ell}}g(x,t)\phi(x)p(t)\chi_{(-x,x)}(t)dxdt=\int_0^{\ell}\phi(x)G[p](x)dx.
		\]
		Since this equality holds for all $\phi\in C_0^{\infty}(0,\ell)$, we have that $G[p](x)=0$. Now, for $x$ fixed, $0=\int_{-x}^xg(x,t)p(t)dt$ for arbitrary $p \in \mathcal{P}[-\ell,\ell]$. In particular, $0=\int_{-x}^{x}g(x,t)p(t)dt$ for all $p \in \mathcal{P}[-x,x]$, and we conclude that for a.e. $x\in (0,\ell)$ and for a.e. $t\in (-x,x)$, $g(x,t)=0$. Then $g\equiv 0$ in $L^2(\mathcal{T}_{\ell})$ and $\mathscr{E}$ is complete.
	\end{proof}
	
	\begin{theorem}
		If $\{f_n\}\subset C^{\infty}[0,\ell]$ is a sequence satisfying the conditions of Lemma \ref{Lemma:convergencefn}, then $K_{f_n} \rightharpoonup K_f$ weakly in $L^2(\mathcal{T}_{\ell})$.
	\end{theorem}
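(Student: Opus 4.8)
The plan is to combine the uniform $L^2(\mathcal{T}_\ell)$ bound on the kernels $\{K_{f_n}\}$ with the strong operator convergence $\mathbf{T}_{f_n}\to\mathbf{T}_f$, and to test against the complete family $\mathscr{E}$ supplied by the preceding lemma. Recall from estimate \eqref{eq:estimatenormskf} in the proof of Proposition \ref{prop:strongconvergence} that $M_0:=\sup_{n\in\mathbb{N}}\|K_{f_n}\|_{L^2(\mathcal{T}_\ell)}<\infty$. Since $L^2(\mathcal{T}_\ell)$ is a Hilbert space, a norm-bounded sequence converges weakly to a given limit as soon as the corresponding pairings converge against a total set: concretely, given $g\in L^2(\mathcal{T}_\ell)$ and $\varepsilon>0$, I would pick $h\in\operatorname{Span}\mathscr{E}$ with $\|g-h\|_{L^2(\mathcal{T}_\ell)}<\varepsilon$ and use the bound $|(K_{f_n}-K_f\,|\,g)_{L^2(\mathcal{T}_\ell)}|\le (M_0+\|K_f\|_{L^2(\mathcal{T}_\ell)})\varepsilon+|(K_{f_n}-K_f\,|\,h)_{L^2(\mathcal{T}_\ell)}|$. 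Thus the task reduces to showing
\[
\iint_{\mathcal{T}_\ell}K_{f_n}(x,t)\phi(x)p(t)\,dx\,dt\longrightarrow \iint_{\mathcal{T}_\ell}K_f(x,t)\phi(x)p(t)\,dx\,dt
\]
for every $\phi\in C_0^\infty(0,\ell)$ and $p\in\mathcal{P}[-\ell,\ell]$.

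To handle this, I would fix $\phi$ and $p$ and apply Fubini's theorem (legitimate since $K_{f_n}\in L^2(\mathcal{T}_\ell)$ and $\phi p$ is bounded on $\mathcal{T}_\ell$ with compact support in $x$), rewriting the left-hand side by means of the definition of $\mathbf{T}_{f_n}$ as
\[
\iint_{\mathcal{T}_\ell}K_{f_n}\,\phi p=\int_0^\ell\phi(x)\left(\int_{-x}^xK_{f_n}(x,t)p(t)\,dt\right)dx=\int_0^\ell\phi(x)\bigl(\mathbf{T}_{f_n}[p](x)-p(x)\bigr)dx.
\]
By Proposition \ref{prop:strongconvergence}, $\mathbf{T}_{f_n}[p]\to\mathbf{T}_f[p]$ in $L^2(0,\ell)$, and since $\phi\in L^2(0,\ell)$ the continuity of the inner product gives convergence of the right-hand side to $\int_0^\ell\phi(x)(\mathbf{T}_f[p](x)-p(x))\,dx$, which by the same computation equals $\iint_{\mathcal{T}_\ell}K_f\,\phi p$. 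Combining this with the reduction of the previous paragraph yields $K_{f_n}\rightharpoonup K_f$ in $L^2(\mathcal{T}_\ell)$.

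I do not expect a substantial obstacle here: the essential content is already packaged in the earlier results, namely the a priori bound \eqref{eq:estimatenormskf} and the strong-operator convergence $\mathbf{T}_{f_n}\to\mathbf{T}_f$ on polynomials. The only steps requiring a little care are the bookkeeping identity $\int_{-x}^xK_{f_n}(x,t)p(t)\,dt=\mathbf{T}_{f_n}[p](x)-p(x)$, the justification of Fubini's theorem, and making the ``total set plus uniform bound'' argument precise. In effect this proposition is the natural dual counterpart of Proposition \ref{prop:strongconvergence}: strong convergence of the operators on a dense subspace, upgraded via the kernel bound, forces weak convergence of the kernels themselves.
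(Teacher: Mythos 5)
Your proposal is correct and follows essentially the same route as the paper: a uniform $L^2(\mathcal{T}_\ell)$ bound on the kernels from \eqref{eq:estimatenormskf}, testing against the complete family $\mathscr{E}$ via Fubini and the identity $\int_{-x}^{x}K_{f_n}(x,t)p(t)\,dt=\mathbf{T}_{f_n}[p](x)-p(x)$, and Proposition \ref{prop:strongconvergence} to pass to the limit. The only cosmetic differences are that the paper invokes the uniform ($C[0,\ell]$) convergence of $\mathbf{T}_{f_n}[p]$ rather than the $L^2$ one and cites a textbook lemma for the ``bounded sequence plus total set'' step, which you instead spell out directly.
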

	\begin{proof}
		Let $\Phi=\phi p\chi_{(-x,x)}\in \mathscr{E}$. Then
		\begin{align*}
			\langle \widehat{K}_{f_n}|\Phi\rangle_{L^2(\mathcal{T}_{\ell})}=\int_0^{\ell}\phi(x)\int_{-x}^{x}K_{f_n}(x,t)p(t)dt \rightarrow \int_0^{\ell}\phi(x)\int_{-x}^{x}K_{f}(x,t)p(t)dt=\langle \widehat{K}_f|\Phi\rangle_{L^2(\mathcal{T}_{\ell})},
		\end{align*}
		because $\int_{-x}^{x}K_{f_n}(x,t)p(t)dt$ converges uniformly to $\int_{-x}^{x}K_f(x,t)p(t)dt$ on $[0,\ell]$ (by Proposition \ref{prop:strongconvergence}). Since the sequence $\{K_{f_n}\}$ is bounded in $L^2(\mathcal{T}_{\ell})$ (by \eqref{eq:estimatenormskf}) and $\mathscr{E}$ is complete in $L^2(\mathcal{T}_{\ell})$, \cite[Lemma 4.8-7]{kreyszig} implies that $K_{f_n} \rightharpoonup K_f$ weakly in $L^2(\mathcal{T}_{\ell})$.
	\end{proof}
	
	\section{Neumann series of Bessel functions representation}
	
	Following \cite{NSBF1}, we develop the main properties of a Fourier-Legendre series representation for the transmutation kernel $K_f$. 
	
	Let $\{P_m\}_{m=0}^{\infty}$ be the system of orthogonal Legendre polynomials in $L^2(-1,1)$ with norm $\|P_m\|_{L^2(-1,1)}=\sqrt{\frac{2}{2m+1}}$. Fix $x\in (0,\ell]$. Hence $\{P_m\left(\frac{t}{x}\right)\}_{m=0}^{\infty}$ forms an orthogonal basis for $L^2(-x,x)$, and since $K_f(x,\cdot)\in L^2(-x,x)$, we have the Fourier-Legendre series expansion
	\begin{equation}\label{eq:FLseries}
		K_f(x,t)=\sum_{m=0}^{\infty}\frac{a_m(x)}{x}P_m\left(\frac{t}{x}\right),
	\end{equation}
	where the Fourier-Legendre coefficients are given by
	\begin{equation}\label{eq:fourierlegendrecoef1}
		a_m(x)=\left(m+\frac{1}{2}\right)\int_{-x}^{x}K_f(x,t)P_m\left(\frac{t}{x}\right)dt\qquad \forall m\in \mathbb{N}_0. 
	\end{equation}
	
	\begin{remark}\label{remark:fourierlegendrecoef}
		If we write $P_m(z)=\sum_{k=0}^{m}l_{k,m}z^k$, then using the mapping property \eqref{eq:mappingprop}, we get
		\begin{align*}
			a_m(x)&=\left(m+\frac{1}{2}\right)\int_{-x}^{x}K_f(x,t)\sum_{k=0}^{m}l_{k,m}\left(\frac{t}{x}\right)^kdt\\
			&= \left(m+\frac{1}{2}\right)\sum_{k=0}^{m}\frac{l_{k,m}}{x^k}(\mathbf{T}_f[x^k]-x^k) \\
			&= \left(m+\frac{1}{2}\right)\left(\sum_{k=0}^{m}\frac{l_{k,m}}{x^k}\varphi_f^{(k)}-P_m(1)\right).
		\end{align*}
		Since $P_m(1)=1$, this yields
		\begin{equation}\label{eq:fourierlegendrecoef}
			a_m(x)=\left(m+\frac{1}{2}\right)\left(\sum_{k=0}^{m}\frac{l_{k,m}}{x^k}\varphi_f^{(k)}(x)-1\right)\qquad \forall m\in \mathbb{N}_0.
		\end{equation}
		In particular, using $P_0\equiv 1$ and $P_1(z)=z$, we obtain the formulas
		\begin{equation}\label{eq:coefficients0and1}
			a_0(x)=\frac{f(x)-1}{2}\quad \text{and } a_1(x)=\frac{3}{2}\left(\frac{f(x)}{x}\int_0^x\frac{dt}{f^2(t)}-1\right)
		\end{equation}
		(because $\varphi_f^{(1)}(x)=f(x)\int_0^x\frac{dt}{f^2(t)})$. It is worth mentioning that relations of the type \eqref{eq:coefficients0and1} have proven to be a useful tool in the study of methods for solving inverse problems for regular potentials, where we seek to recover coefficients $a_0$ and $a_1$, and from them the solution $f$ \cite{vkinverse1,vkinverse2,vkstinverse2}.
	\end{remark}
	
	\begin{remark}
		Using the parity of the Legendre polynomials and the definition of the even and odd kernels $K_f^{\pm}(x,t)$, we deduce the following Legendre series:
		\begin{equation}\label{eq:FLseriessinecosine}
			K_f^{+}(x,t)=2\sum_{m=0}^{\infty}\frac{a_{2m}(x)}{x}P_{2m}\left(\frac{t}{x}\right) \quad \text{and}\;\;\; K_f^{-}(x,t)=2 \sum_{m=0}^{\infty}\frac{a_{2m+1}(x)}{x}P_{2m+1}\left(\frac{t}{x}\right)
		\end{equation}
	\end{remark}	
	
	Similar to the regular case \cite[Sec. 4]{NSBF1}, substitution of series \eqref{eq:FLseriessinecosine} in the integral relations \eqref{eq:transmutationcosinesine} leads to the following Neumann series of Bessel functions (NSBF) representations.
	
	\begin{theorem}
		For every $x\in (0,\ell]$, the solutions $C_f(\rho,x)$ and $S_f(\rho,x)$ admit the following series representations:
		\begin{align}
			C_f(\rho,x)& =\cos(\rho x)+\sum_{m=0}^{\infty}(-1)^m\alpha_{2m}(x)j_{2m}(\rho x), \label{eq:NSBFcosine}\\
			S_f(\rho,x) &= \frac{\sin(\rho x)}{\rho}+\frac{1}{\rho}\sum_{m=0}^{\infty}(-1)^m\alpha_{2m+1}(x)j_{2m+1}(\rho x),\label{eq:NSBFsine}
		\end{align}
		where $\alpha_m=2a_m$ and $j_{\nu}(\zeta)=\sqrt{\frac{\pi}{2\zeta}}J_{\nu+\frac{1}{2}}(\zeta)$ stands for the spherical Bessel functions.  For $N\in \mathbb{N}$, the partial sums
		\begin{align}
			C_f(\rho,x)& =\cos(\rho x)+\sum_{m=0}^{ \left[\frac{N}{2}\right] }(-1)^m\alpha_{2m}(x)j_{2m}(\rho x),\label{eq:NSBFpartialcosine} \\
			S_f(\rho,x) &= \frac{\sin(\rho x)}{\rho}+\frac{1}{\rho}\sum_{m=0}^{\left[\frac{N-1}{2}\right]}(-1)^m\alpha_{2m+1}(x)j_{2m+1}(\rho x),\label{eq:NSBFpartialsine}
		\end{align}
		obey the following estimates for every $\rho \in \mathbb{C}$ which lies in a strip of the form $|\operatorname{Im}\rho|\leq C$ with $C>0$:
		\begin{equation}\label{eq:estimatespartialsums}
			|C_f(\rho,x)-C_{f,N}(\rho,x)|\leq \epsilon_N(x) \frac{2\sinh(Cx)}{C} \quad \text{and}\;\;\; |\rho S_f(\rho,x)-\rho S_{f,N}(\rho,x)|\leq \epsilon_N(x) \frac{2\sinh(Cx)}{C},
		\end{equation}
		where $\epsilon_N(x)=\|K_f(x,\cdot)-K_{f,N}(x,\cdot)\|_{L^2(-x,x)}$. Then the series \eqref{eq:NSBFcosine} and \eqref{eq:NSBFsine} converge pointwise with respect to $x$ and uniformly with respect to $\rho$ in every strip $|\operatorname{Im}\rho|\leq C$.
	\end{theorem}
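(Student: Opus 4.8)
The plan is to follow the approach of \cite{NSBF1}, adapted to the distributional setting, with the only inputs being the integral representations of Section 6 and the Fourier–Legendre expansion \eqref{eq:FLseries}. First I would substitute the Legendre series \eqref{eq:FLseriessinecosine} for the even and odd kernels into the transmutation identities \eqref{eq:transmutationcosinesine} (equivalently \eqref{eq:cosineandsinetransm}). Writing $t=xs$ with $s\in[-1,1]$, this reduces everything to the classical Legendre–Bessel integrals
\[
\int_{-1}^{1}P_{2m}(s)\cos(zs)\,ds=2(-1)^m j_{2m}(z),\qquad \int_{-1}^{1}P_{2m+1}(s)\sin(zs)\,ds=2(-1)^m j_{2m+1}(z),
\]
which are the even and odd parts of the Rayleigh plane–wave expansion $e^{izs}=\sum_{n\ge0}(2n+1)i^n j_n(z)P_n(s)$. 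Termwise integration is legitimate because, for fixed $x$, the Legendre series converges to $K_f(x,\cdot)$ in $L^2(-x,x)$ while $\cos(\rho t)$ and $\tfrac{\sin(\rho t)}{\rho}$ are bounded on $[-x,x]$; substituting $\alpha_m=2a_m$ then produces exactly \eqref{eq:NSBFcosine}–\eqref{eq:NSBFsine}. One must keep track of the parity bookkeeping here: only even–indexed Legendre polynomials survive the cosine integral and only odd–indexed ones survive the sine integral, so the even and odd kernels $K_f^{\pm}$ feed precisely the two series.

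For the partial–sum estimates I would observe that the truncated sums $C_{f,N}$ and $S_{f,N}$ in \eqref{eq:NSBFpartialcosine}–\eqref{eq:NSBFpartialsine} arise from the very same computation with $K_f(x,\cdot)$ replaced by its Fourier–Legendre partial sum $K_{f,N}(x,t)=\sum_{m=0}^{N}\tfrac{a_m(x)}{x}P_m\!\left(\tfrac{t}{x}\right)$ (again the even terms contribute to $C_{f,N}$ and the odd ones to $\rho S_{f,N}$). Consequently
\[
C_f(\rho,x)-C_{f,N}(\rho,x)=\int_{-x}^{x}\bigl(K_f(x,t)-K_{f,N}(x,t)\bigr)\cos(\rho t)\,dt,
\]
and similarly for $\rho S_f-\rho S_{f,N}$ with $\sin(\rho t)$ in place of $\cos(\rho t)$. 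Cauchy–Schwarz in $L^2(-x,x)$ then contributes a factor $\|K_f(x,\cdot)-K_{f,N}(x,\cdot)\|_{L^2(-x,x)}=\epsilon_N(x)$ times the $L^2(-x,x)$ norm of $\cos(\rho t)$ (resp. $\sin(\rho t)$), and the latter is controlled uniformly over the strip $|\operatorname{Im}\rho|\le C$ by the elementary bounds $|\cos(\rho t)|,|\sin(\rho t)|\le\cosh((\operatorname{Im}\rho)t)\le\cosh(Ct)$, giving \eqref{eq:estimatespartialsums}. An equivalent route, closer to \cite{NSBF1}, is to apply Cauchy–Schwarz directly to the tail of the Bessel series using the identity $\sum_{n\ge0}(2n+1)\,|j_n(z)|^2=\tfrac{\sinh(2\operatorname{Im}z)}{2\operatorname{Im}z}$ (itself Parseval applied to the Rayleigh expansion) together with the Parseval relation for the coefficients $\alpha_m(x)$.

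Finally, pointwise convergence in $x$ and uniform convergence in $\rho$ on each strip follow immediately: for fixed $x\in(0,\ell]$ the system $\{P_m(\cdot/x)\}_{m\ge0}$ is an orthogonal basis of $L^2(-x,x)$ and $K_f(x,\cdot)\in L^2(-x,x)$, so $\epsilon_N(x)\to0$; since the bound $\epsilon_N(x)\tfrac{2\sinh(Cx)}{C}$ in \eqref{eq:estimatespartialsums} is independent of $\rho$ within the strip, the series \eqref{eq:NSBFcosine}–\eqref{eq:NSBFsine} converge uniformly in $\rho$ there. The only genuinely delicate points are the parity bookkeeping that identifies $C_{f,N},S_{f,N}$ with the Legendre truncation errors (so that $\epsilon_N(x)$, rather than some other quantity, is what enters the estimate) and the justification of termwise integration of the Legendre series against the trigonometric functions; everything else is the routine machinery already available from Section 6 and \cite{NSBF1}.
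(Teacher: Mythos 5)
Your proposal is correct and coincides with the paper's own (essentially unwritten) proof: the paper merely remarks that substituting the Fourier--Legendre series \eqref{eq:FLseriessinecosine} into the integral relations \eqref{eq:transmutationcosinesine} and integrating term by term, as in \cite{NSBF1}, yields the representations, and your argument supplies exactly those details (the Legendre--Bessel integrals from the Rayleigh expansion, the $L^2(-x,x)$ justification of termwise integration, Cauchy--Schwarz for the truncation error, and Parseval for $\epsilon_N(x)\to 0$). The only point to watch is the constant in \eqref{eq:estimatespartialsums}: your Cauchy--Schwarz step gives $|C_f(\rho,x)-C_{f,N}(\rho,x)|\leq \epsilon_N(x)\bigl(\int_{-x}^{x}\cosh^{2}(Ct)\,dt\bigr)^{1/2}=\epsilon_N(x)\bigl(x+\tfrac{\sinh(2Cx)}{2C}\bigr)^{1/2}$, which is \emph{not} dominated by $\epsilon_N(x)\tfrac{2\sinh(Cx)}{C}$ for small $Cx$, so both your direct route and the Parseval route actually produce the bound with $\sqrt{\sinh(2Cx)/C}$ in place of $2\sinh(Cx)/C$ --- a discrepancy in the constant as stated (inherited from the citation) rather than a flaw in your reasoning, but one you should not claim to have ``given'' without further argument.
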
	
	Even though formulas \eqref{eq:fourierlegendrecoef} provide an explicit way to compute the Fourier--Legendre coefficients, such expressions are typically impractical for numerical computation. Nevertheless, it is possible to compute these coefficients recursively, by extending the procedure developed in \cite[Sec. 6]{NSBF1} for the case of regular potentials.
	\begin{theorem}
		Define $\sigma_m(x)=x^m\alpha_m(x)$ for every $m\in \mathbb{N}_0$. The functions $\{\sigma_m(x)\}_{m=0}^{\infty}$ satisfies the following recursive relations:
		\begin{align}
			\sigma_0(x)& = f(x)-1, \quad \sigma_1(x)=3\left(f(x)\int_0^x\frac{dt}{f^2(t)}-x\right),\label{eq:sigmaoy1} \\
			\eta_m(x) &= \int_0^x\left(tf'(t)+(m-1)f(t)\right)\sigma_{m-2}(t)dt,\\
			\theta_m(x) &= \int_0^x\frac{1}{f^2(t)}\left(\eta_m(t)-tf(t)\sigma_{m-2}(t)\right)dt,\\
			\sigma_m(x) &= \frac{2m+1}{2n-3}\left[x^2\sigma_{m-2}(x)+c_mf(x)\theta_m(x)\right], \label{eq:sigmam}
		\end{align}  
		where $c_m=1$ if $m=1$ and $c_m=2(2m-1)$ otherwise.
	\end{theorem}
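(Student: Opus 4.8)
The key observation is that Theorem~\ref{Th:integralrepresentation1} only guarantees $K_f\in L^2(\mathcal{T}_{\ell})$, so the classical route of \cite[Sec. 6]{NSBF1} --- differentiating a Goursat problem for $K_f$ --- is not available; instead I would argue entirely from the closed form of the Fourier--Legendre coefficients in terms of the formal powers. Writing $P_m(z)=\sum_{k=0}^{m}l_{k,m}z^k$, identity \eqref{eq:fourierlegendrecoef} together with $\alpha_m=2a_m$ and $\sigma_m(x)=x^m\alpha_m(x)$ gives
	\[
	\sigma_m(x)=(2m+1)\Big(\sum_{k=0}^{m}l_{k,m}\,x^{m-k}\varphi_f^{(k)}(x)-x^m\Big),\qquad m\in\mathbb{N}_0.
	\]
	For $m=0$ and $m=1$ this reduces, via $\varphi_f^{(0)}=f$ and $\varphi_f^{(1)}(x)=f(x)\int_0^x\frac{dt}{f^2(t)}$, to the formulas \eqref{eq:sigmaoy1} (equivalently, to \eqref{eq:coefficients0and1}), which is the base of the induction.

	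For $m\ge 2$ I would proceed in two steps. First, a direct computation with the nested integrals: differentiating the definitions of $\eta_m$ and $\theta_m$ and telescoping yields the identity $f(x)\,\theta_m(x)=\mathbf{R}_f g_m(x)$, where $\mathbf{R}_f$ is the operator \eqref{eq:operatorR} and $g_m(t):=(m-2)\sigma_{m-2}(t)-t\,\sigma_{m-2}'(t)$; this step is legitimate because $f,1/f\in W^{1,2}(J)$ and $\sigma_{m-2}\in W^{1,2}(0,\ell)$, the latter inherited from the $W^{1,2}$-regularity of the formal powers. Second, one inserts the recursion $\varphi_f^{(k)}=k(k-1)\mathbf{R}_f\varphi_f^{(k-2)}$ (and $\mathbf{D}_f\varphi_f^{(k)}=k\varphi_{\frac{1}{f}}^{(k-1)}$ from Proposition~\ref{Prop:Darbouxformalpowers}, to handle the derivative $\sigma_{m-2}'$) into the displayed closed form of $\sigma_m$, and uses the derivative recurrence $P_m'(z)-P_{m-2}'(z)=(2m-1)P_{m-1}(z)$ (together with $zP_m'=mP_m+P_{m-1}'$) to re-express the coefficient vector $(l_{k,m})_k$ through $(l_{k,m-2})_k$ and $(l_{k,m-1})_k$. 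The contributions in which the power $x^{m-2-k}$ of the $\sigma_{m-2}$-expansion is merely raised to $x^{m-k}$ assemble into $x^2\sigma_{m-2}(x)$, whereas the remaining contributions assemble, after the substitution $\varphi_f^{(k)}=k(k-1)\mathbf{R}_f\varphi_f^{(k-2)}$, into $\mathbf{R}_f g_m$; matching the scalar prefactors then gives \eqref{eq:sigmam} with the constants $\tfrac{2m+1}{2m-3}$ and $c_m$, the special value of $c_m$ at the smallest admissible index compensating for the absence of a genuine $\mathbf{R}_f\varphi_f^{(m-2)}$-term and the use of the base case in its place.

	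The main obstacle is the combinatorics of this last step: one must verify that the particular linear combination of Legendre recurrences reproduces precisely the coefficient functions $tf'(t)+(m-1)f(t)$ of $\eta_m$ and $tf(t)$ of $\theta_m$, and that the two subtracted monomials ($-x^m$ in $\sigma_m$ versus $-x^{m-2}$ in $\sigma_{m-2}$) are reconciled by $P_m(1)=P_{m-2}(1)=1$. A convenient cross-check, which also fixes $c_m$, is the case $m=2$: there $g_2(t)=-tf'(t)$, and one verifies directly from \eqref{eq:operatorR} that $\mathbf{R}_f\big[\,3f+6xf'\,\big]=\tfrac{3}{2}x^2 f$, so that $\sigma_2=5\big(x^2(f-1)+6f\theta_2\big)$ coincides with $5\big(\tfrac{3}{2}\varphi_f^{(2)}-\tfrac12 x^2 f-x^2\big)$, the value predicted by the closed form. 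Since every manipulation takes place within $W^{1,2}$ and uses only the bounded operator $\mathbf{R}_f$, the distributional nature of $q_f$ introduces no additional difficulty --- which is precisely the dividend of the Polya-factorization regularization.
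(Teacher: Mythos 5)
Your proposal is correct in outline but takes a genuinely different route from the paper. The paper does not attempt the Legendre-coefficient combinatorics at all in the distributional setting: it takes a sequence of smooth non-vanishing normalized impedances $f_n\to f$ as in Lemma~\ref{Lemma:convergencefn}, observes via the closed form \eqref{eq:fourierlegendrecoef} and Lemma~\ref{lemma:convergenceformalpowers} that $\sigma_{n,m}\to\sigma_m$ uniformly on $[0,\ell]$, invokes the recursion of \cite[Sec.~6]{NSBF1} for each smooth $f_n$, and then passes to the limit in $\eta_{n,m}$, $\theta_{n,m}$ and \eqref{eq:sigmam} using $f_n\to f$ in $W^{1,2}(0,\ell)$. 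You instead propose to re-derive the recursion directly from \eqref{eq:fourierlegendrecoef}, the operator identity $\varphi_f^{(k)}=k(k-1)\mathbf{R}_f\varphi_f^{(k-2)}$ and the Legendre derivative recurrence. The pivotal new identity in your plan, $f\theta_m=\mathbf{R}_f\bigl[(m-2)\sigma_{m-2}-t\sigma_{m-2}'\bigr]$, is correct: differentiating $\eta_m(t)-tf(t)\sigma_{m-2}(t)$ gives exactly $f(t)\bigl[(m-2)\sigma_{m-2}(t)-t\sigma_{m-2}'(t)\bigr]$, and all functions involved lie in $W^{1,2}$, so the manipulation is legitimate; your base case and the $m=2$ cross-check (including $\mathbf{R}_f[xf']=\tfrac{x^2}{4}f-\tfrac12\mathbf{R}_f f$) also check out. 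If completed, your argument would be self-contained and would in effect re-prove the \cite{NSBF1} recurrence in the singular setting, whereas the paper's argument is three lines long but leans on the external smooth-case result and on the approximation machinery already built in Section~8.

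The caveat is that, as written, your proposal proves the theorem only for $m\le 2$: the general-$m$ step --- matching the coefficient vectors $(l_{k,m})$, $(l_{k,m-2})$, $(l_{k,m-1})$ through $P_m'-P_{m-2}'=(2m-1)P_{m-1}$ and assembling the outcome into $x^2\sigma_{m-2}+c_m f\theta_m$ with the stated constants --- is precisely the computation you flag as ``the main obstacle'' and do not carry out. That computation is the content of \cite[Sec.~6]{NSBF1}, so it is known to close, but your write-up does not close it; either execute that induction in full, or do what the paper does and transfer the smooth-case recursion by approximation, for which every needed ingredient (Lemmas~\ref{Lemma:convergencefn} and~\ref{lemma:convergenceformalpowers}, plus uniform convergence of $\eta_{n,m}$ and $\theta_{n,m}$) is already available.
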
	
	
	\begin{proof}
		Formulas \eqref{eq:sigmaoy1} follow from \eqref{eq:coefficients0and1}. Let $\{f_n\}\subset C^{\infty}[0,\ell]$ be a sequence of normalized non-vanishing functions converging to $f$ in the sense of Lemma \ref{Lemma:convergencefn}. Let $\{\sigma_{n,m}(x)\}$ denote the corresponding sequence associated with $f_n$. By formula \eqref{eq:fourierlegendrecoef},
		\begin{align*}
			\sigma_{n,m}(x)=\left(m+\frac{1}{2}\right)\left(\sum_{k=0}^{m}l_{k,m}x^{m-k}\varphi_{f_n}^{(k)}(x)-x^m\right)
		\end{align*} 
		Lemma \ref{lemma:convergenceformalpowers} implies that, for each fixed $m\in \mathbb{N}_0$, the right-hand side converges uniformly to $\sigma_m(x)$ on $[0,\ell]$ as $n\rightarrow \infty$.
		
		According to \cite[Sec. 6]{NSBF1}, for $f_n\in C^{\infty}[0,\ell]$, the coefficients $\{\sigma_{n,m}(x)\}$ satisfy the recursive relations \eqref{eq:sigmaoy1}-\eqref{eq:sigmam}. Since $\sigma_{n,m},f_n\rightarrow \sigma_m,f$ uniformly on $[0,\ell]$ and $f'_n\rightarrow f'$ in $L^2(0,\ell)$, it follows that $\eta_{n,m},\theta_{n,m}\rightarrow \eta_m,\theta_m$ uniformly on $[0,\ell]$ as $n\rightarrow \infty$. Hence the sequence $\{\sigma_m\}$ satisfies  \eqref{eq:sigmaoy1}-\eqref{eq:sigmam}.
	\end{proof}	
	
	\begin{remark}
		Let us denote by $\mathcal{R}:L^2(0,\ell)\rightarrow L^2(0,\ell)$ the reflection operator $\mathcal{R}u(x)=u(\ell-x)$. Hence $\mathcal{R}$ is a unitary operator in $L^2(0,\ell)$, and we extend its action to $q\in W^{-1,2}(0,\ell)$ by $(\mathcal{R}q|\phi)_{W_0^{1,2}(J)}:=(q|\mathcal{R}\phi)_{W_0^{1,2}(J)}$. A direct computation shows that $\mathcal{R}q=-(\mathcal{R}\sigma)'$. Hence, if $y\in W^{1,2}(J)$ is a solution of \eqref{eq:Schrodinger1}, then 
		\begin{align*}
			(\mathbf{S}_{\mathcal{R}q}\mathcal{R}y|\phi)_{W_0^{1,2}(J)}&=((\mathcal{R}y)'|\phi')_{W^{1,2}_0(J)}+(\mathcal{R}q|(\mathcal{R}y)\phi)_{W^{1,2}_0(J)}\\
			&=-(\mathcal{R}(y')|\phi')_{W^{1,2}_0(J)}+(q|\mathcal{R}((\mathcal{R}y)\phi))_{W^{1,2}_0(J)}\\
			&=(y'|(\mathcal{R}\phi)')_{W^{1,2}_0(J)}+(q|y(\mathcal{R}\phi))_{W^{1,2}_0(J)}\\
			&=\lambda (y|\mathcal{R}\phi)_{W^{1,2}_0(J)}=\lambda (\mathcal{R}y|\phi)_{W^{1,2}_0(J)},
		\end{align*}
		that is, $\mathcal{R}y$ is a solution of $\mathbf{S}_{\mathcal{R}q}\mathcal{R}y=\lambda \mathcal{R}y$. The initial conditions are expressed in terms of $-\mathcal{R}\sigma$. A non-vanishing solution is given by $\tilde{f}=\frac{\mathcal{R}f}{f(\ell)}$.
		
		Now, let $\psi(\rho,x)$ and $\vartheta(\rho,x)$ be the solutions of \eqref{eq:Schrodinger1} determined by the initial conditions at $x=\ell$:
		\begin{align}
			\psi(\rho,\ell)=1, & \quad \psi_{\sigma}^{[1]}(\rho,\ell)=0, \label{eq:initialpsi}\\
			\vartheta(\rho,\ell)=0, & \quad \vartheta_{\sigma}^{[1]}(\rho,\ell)=1. \label{eq:initialtheta}
		\end{align}
		Hence $\tilde{\psi}(\rho,x)=\mathcal{R}\psi(\rho,x)$, is a solution of $\mathbf{S}_{\mathcal{R}q}v=\lambda v$ satisfying the initial conditions $\tilde{\psi}(\rho,0)=1$ and 
		\[
		\tilde{\psi}_{-\mathcal{R}\sigma}^{[1]}(\rho,0)=-\mathcal{R}\psi'(\rho,0)+\mathcal{R}\sigma(0)\mathcal{R}\psi(\rho,0)=-\psi_{\sigma}^{[1]}(\rho,\ell)=0.
		\]
		Therefore, $\tilde{\psi}(\rho,x)=C_{\tilde{f}}(\rho,x)$. By \eqref{eq:NSBFcosine}, we conclude that $\psi(\rho,x)$ admits the NSBF representation 
		\begin{equation}\label{eq:NSBFpsi}
			\psi(\rho,x)=\cos(\rho(\ell-x))+\sum_{m=0}^{\infty}(-1)^mt_m(x)j_{2m}(\rho(\ell-x)).
		\end{equation}
		Analogously,
		\begin{equation}\label{eq:NSBFtheta}
			\vartheta(\rho,x)=\frac{\sin(\rho(\ell-x))}{\rho}+\sum_{m=0}^{\infty}(-1)^ms_m(x)j_{2m+1}(\rho(\ell-x)).
		\end{equation}
	\end{remark}
	\begin{remark}
		From relations \eqref{eq:Darbouxtransformsinecosine}, one can obtain NSBF representations for $\mathbf{D}_fC_f(\rho,x)$ and $\mathbf{D}_fS_f(\rho,x)$ in terms of the corresponding series of $C_{\frac{1}{f}}(\rho,x)$ and $S_{\frac{1}{f}}(\rho,x)$, and by Theorem \ref{th:factorizacion} (i), the NSBF representation for the $\sigma$-quasiderivatives.
	\end{remark}
	
	\section{Relation with a transmutation operator for the Sturm-Liouville equation in impedance form }
	
	In this final section, we analyze an impedance equation of the form 
	\begin{equation}\label{eq:impedance1}
		\mathbf{L}_fv=-\frac{1}{f^2}Df^2Dv=\lambda v,\quad v\in W^{2,2}(0,\ell),
	\end{equation}
	where $f\in W^{1,2}(0,\ell)$ is a complex-valued nonvanishing function. The function $f$ is called the {\it impedance function} of Eq. \eqref{eq:impedance1}. All concepts associated with the operator $\mathbf{L}_f$ (solutions, formal powers, etc) will be denoted with a hat. Let $\widehat{e}_f(\rho,x)$ be the solution of \eqref{eq:impedance1} satisfying the initial conditions
	\begin{equation}\label{eq:impedanceosolexp}
		\widehat{e}_f(\rho,0)=1,\qquad (\widehat{e}_f)'(\rho,0)=i\rho.
	\end{equation} 
	
	The formal powers associated with the impedance $f$ are the functions $\{\widehat{\varphi}_f^{(k)}\}_{k=0}^{\infty}$ defined by:
	\begin{align*}
		\widehat{\varphi}_f^{(0)}\equiv 1, & \quad \widehat{\varphi}_f^{(1)}(x)=\int_{0}^{x}\frac{dt}{f^2(t)}, \\
		\widehat{\varphi}_f^{(k)}(x) & = k(k-1)\int_0^x\frac{dt}{f^2(t)}\int_0^tf^2(s)\widehat{\varphi}_f^{(k-2)}(s)ds,\quad k\geq 2.
	\end{align*}
	
	In \cite[Th. 5]{mineimpedance3}, the SPPS representation for the solution $\widehat{e}_f(\rho,x)$ is established:
	\[
	\widehat{e}_f(\rho,x)=\sum_{k=0}^{\infty}\frac{(i\rho)^k\widehat{\varphi}_f^{(k)}(x)}{k!}.
	\]
	The series converges with respect to the variable $x$ in the norm of $W^{2,2}(0,\ell)$, and uniformly on compact subsets of the $\rho$-complex plane.
	
	According to \cite{mineimpedance3}, the impedance equation \eqref{eq:impedance1} admits a transmutation kernel. 
	\begin{theorem}[\cite{mineimpedance3}]\label{th:impedance1}
		There exists a kernel $\widehat{K}_f\in L^2(\mathcal{T}_{\ell})$ such that
		
		\begin{equation}
			\widehat{e}_f(\rho,x)=e^{i\rho x}-i\rho \int_{-x}^{x}\widehat{K}_f(x,t)e^{i\rho t}dt.
		\end{equation}
		Furthermore, if we consider the operator
		\begin{equation}
			\widehat{\mathbf{T}}_fv(x)=v(x)-\int_{-x}^{x}\widehat{K}_f(x,t)v'(t)dt,\quad v\in W^{1,2}(-\ell,\ell),
		\end{equation}
		then the action of $\widehat{\mathbf{T}}_f$ over the powers $\{x^k\}_{k=0}^{\infty}$ is given by
		\begin{equation}\label{eq:transmpropimpedance}
			\widehat{\mathbf{T}}_f[x^k]=\widehat{\varphi}_f^{(k)}(x)\qquad \forall k\in \mathbb{N}_0,
		\end{equation}
		and the following transmutation property holds:
		\begin{equation}
			\mathbf{L}_f\widehat{\mathbf{T}}_fv=-\widehat{\mathbf{T}}_fv''\qquad \forall v\in W^{3,2}(-\ell,\ell).
		\end{equation}
	\end{theorem}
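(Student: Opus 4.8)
The plan is to deduce the statement from the Schr\"odinger transmutation operator $\mathbf{T}_f$ of Section 6 together with the Polya factorization \eqref{eq:Schrodingertoimpedance} and the Liouville transform, by exhibiting an explicit algebraic relation between $\widehat{\mathbf{T}}_f$ and $\mathbf{T}_f$. The candidate kernel is
\[
\widehat{K}_f(x,t):=\frac{1}{f(x)}\int_{-x}^{t}K_f(x,s)\,ds,\qquad (x,t)\in\mathcal{T}_{\ell},
\]
with $K_f\in L^2(\mathcal{T}_{\ell})$ the kernel of Theorem \ref{Th:integralrepresentation1}. Since $K_f\in L^2(\mathcal{T}_{\ell})$, Fubini gives $K_f(x,\cdot)\in L^2(-x,x)\subset L^1(-x,x)$ for a.e.\ $x$, so $\widehat{K}_f(x,\cdot)$ is absolutely continuous on $[-x,x]$ with $\partial_t\widehat{K}_f(x,t)=K_f(x,t)/f(x)$, $\widehat{K}_f(x,-x)=0$, and (because $\mathbf{T}_f[1]=\varphi_f^{(0)}=f$ forces $\int_{-x}^{x}K_f(x,s)\,ds=f(x)-1$) also $\widehat{K}_f(x,x)=1-\tfrac1{f(x)}$. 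As $1/f$ is bounded on $[0,\ell]$ (it lies in $W^{1,2}(0,\ell)\hookrightarrow C[0,\ell]$), Cauchy--Schwarz gives $\int_{-x}^{x}|\widehat{K}_f(x,t)|^2\,dt\le 4x^2\|1/f\|_{L^{\infty}}^2\|K_f(x,\cdot)\|_{L^2(-x,x)}^2$, and integrating in $x$ yields $\widehat{K}_f\in L^2(\mathcal{T}_{\ell})$.

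The key step is the identity $\widehat{\mathbf{T}}_fv=\tfrac1f\mathbf{T}_fv$ for every $v\in W^{1,2}(-\ell,\ell)$. Indeed, since $\widehat{K}_f(x,\cdot)$ and $v$ are absolutely continuous on $[-x,x]$, integration by parts and the Goursat values above give
\[
\int_{-x}^{x}\widehat{K}_f(x,t)v'(t)\,dt=\Bigl(1-\tfrac1{f(x)}\Bigr)v(x)-\frac{1}{f(x)}\int_{-x}^{x}K_f(x,t)v(t)\,dt,
\]
so $\widehat{\mathbf{T}}_fv(x)=v(x)-\int_{-x}^{x}\widehat{K}_f(x,t)v'(t)\,dt=\tfrac1{f(x)}\bigl(v(x)+\int_{-x}^{x}K_f(x,t)v(t)\,dt\bigr)=\tfrac1{f(x)}\mathbf{T}_fv(x)$. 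Moreover $\widehat{e}_f(\rho,x)=e_f(\rho,x)/f(x)$ by uniqueness for the impedance Cauchy problem: $e_f(\rho,\cdot)\in\mathscr{D}_2(\mathbf{S}_f)$ gives $e_f(\rho,\cdot)/f\in W^{2,2}(0,\ell)$ by Theorem \ref{th:factorizacion}(iv), the Polya factorization \eqref{eq:Schrodingertoimpedance} gives $\mathbf{L}_f[e_f(\rho,\cdot)/f]=\lambda e_f(\rho,\cdot)/f$, and Remark \ref{Remark:initialconditionsunitaryoperator} shows the initial data of $e_f(\rho,\cdot)/f$ at $x=0$ coincide with \eqref{eq:impedanceosolexp}. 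Applying the identity to $v=e^{i\rho\,\cdot}$ and using $\mathbf{T}_f[e^{i\rho x}]=e_f(\rho,x)$ (Section 6) then yields $\widehat{e}_f(\rho,x)=\widehat{\mathbf{T}}_f[e^{i\rho x}]=e^{i\rho x}-i\rho\int_{-x}^{x}\widehat{K}_f(x,t)e^{i\rho t}\,dt$; applying it to $v=x^k$ yields $\widehat{\mathbf{T}}_f[x^k]=\tfrac1f\mathbf{T}_f[x^k]=\tfrac1f\varphi_f^{(k)}$, which equals $\widehat{\varphi}_f^{(k)}$ by an easy induction comparing the recursions defining $\varphi_f^{(k)}$ (through $\mathbf{R}_f$) and $\widehat{\varphi}_f^{(k)}$; this is \eqref{eq:transmpropimpedance}.

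For the transmutation property, let $v\in W^{3,2}(-\ell,\ell)$. By Theorem \ref{Th:transmutationoperator1}, $\mathbf{T}_fv\in\mathscr{D}_2(\mathbf{S}_f)$ and $\mathbf{S}_f\mathbf{T}_fv=-\mathbf{T}_fv''$; in particular Theorem \ref{th:factorizacion}(iv) gives $\widehat{\mathbf{T}}_fv=\mathbf{T}_fv/f\in W^{2,2}(0,\ell)$, so $\mathbf{L}_f\widehat{\mathbf{T}}_fv$ is defined. Applying \eqref{eq:Schrodingertoimpedance} to $y=\mathbf{T}_fv$ gives $\mathbf{S}_f\mathbf{T}_fv=f\,\mathbf{L}_f\widehat{\mathbf{T}}_fv$, while applying $\widehat{\mathbf{T}}_f=\tfrac1f\mathbf{T}_f$ to $v''\in W^{1,2}(-\ell,\ell)$ gives $\mathbf{T}_fv''=f\,\widehat{\mathbf{T}}_fv''$. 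Combining, $f\,\mathbf{L}_f\widehat{\mathbf{T}}_fv=\mathbf{S}_f\mathbf{T}_fv=-\mathbf{T}_fv''=-f\,\widehat{\mathbf{T}}_fv''$, and dividing by the nonvanishing $f$ gives $\mathbf{L}_f\widehat{\mathbf{T}}_fv=-\widehat{\mathbf{T}}_fv''$.

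The main obstacle is not the algebra but the regularity bookkeeping that justifies it in the $W^{1,2}$ (rather than $W^{1,\infty}$) setting: one must check that $K_f(x,\cdot)\in L^1(-x,x)$ for a.e.\ $x$ so that $\widehat{K}_f$ and its traces $\widehat{K}_f(x,\pm x)$ make sense, that $\widehat{K}_f\in L^2(\mathcal{T}_{\ell})$ (and, for the finer continuity and $W^{1,2}$ properties on $\mathcal{T}_{\ell}$ announced in the introduction, that the sharper estimates on $K_f$ from Theorem \ref{Th:integralrepresentation1} and \cite{haratumyant} transfer to $\widehat{K}_f$), that the integration by parts behind $\widehat{\mathbf{T}}_f=f^{-1}\mathbf{T}_f$ is legitimate, and that $\widehat{\mathbf{T}}_f$ is bounded on the relevant Sobolev spaces. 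All the remaining input --- continuity and surjectivity of $\mathbf{T}_f$ on $L^2$ and $W^{1,2}$, the mapping property, bijectivity of the Liouville transform onto $W^{2,2}(0,\ell)$ --- is already available from Sections 3--6, so the argument reduces to these verifications once $\widehat{\mathbf{T}}_f=f^{-1}\mathbf{T}_f$ is established.
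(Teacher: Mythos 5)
Your proposal is correct, but it takes a genuinely different route from the paper: the paper gives no proof of this theorem at all, stating only that ``the proof of these facts can be found in \cite[Sec. 6]{mineimpedance3}.'' You instead derive the result entirely from the machinery already built in Sections 3--6, by defining $\widehat{K}_f(x,t):=\frac{1}{f(x)}\int_{-x}^{t}K_f(x,s)\,ds$ and obtaining the operator identity $\widehat{\mathbf{T}}_f=\frac{1}{f}\mathbf{T}_f$ by integration by parts. This effectively reverses the logical order of the paper's Theorem \ref{Th:Liouvilletransmutationrelation}: there, the relation $\widehat{\mathbf{T}}_f=\mathbf{R}_{\frac{1}{f}}\mathbf{T}_f$ is \emph{deduced} from the mapping property \eqref{eq:transmpropimpedance} imported from \cite{mineimpedance3} together with \eqref{eq:liouvilleformalpowers}, whereas you \emph{build in} the relation and read off \eqref{eq:transmpropimpedance}, the integral representation of $\widehat{e}_f$ (via $\widehat{e}_f=e_f/f$, which is exactly the paper's Liouville-transform argument preceding Theorem \ref{Th:Liouvilletransmutationrelation}), and the transmutation property from \eqref{eq:Schrodingertoimpedance} and Theorem \ref{Th:transmutationoperator1}. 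Your kernel is consistent with the one the paper later characterizes: parts (ii) and (iii) of Theorem \ref{Th:Liouvilletransmutationrelation} ($\partial_t\widehat{K}_f=K_f/f$ together with $\widehat{K}_f(x,-x)=0$) pin down precisely your formula, so nothing is lost, and those two parts become immediate consequences of your construction. The individual steps check out: $K_f(x,\cdot)\in L^2(-x,x)$ for each fixed $x$ by Theorem \ref{Th:integralrepresentation1} (so you do not even need the ``a.e.\ $x$'' caveat), the value $\int_{-x}^{x}K_f(x,s)\,ds=f(x)-1$ follows from $\mathbf{T}_f[1]=f$, the integration by parts is legitimate since $\widehat{K}_f(x,\cdot)$ and $v$ are both absolutely continuous, the uniform bound on $\|K_f(x,\cdot)\|_{L^2(-x,x)}$ from \eqref{eq:estimateintegralK} gives $\widehat{K}_f\in L^2(\mathcal{T}_{\ell})$, and the induction showing $\frac{1}{f}\varphi_f^{(k)}=\widehat{\varphi}_f^{(k)}$ is a direct comparison of the two recursions. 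What your approach buys is self-containedness (no appeal to the external reference for this statement); what it does not reproduce is whatever additional regularity of $\widehat{K}_f$ the cited source establishes (e.g.\ the $W^{2,2}$ regularity of $\widehat{\mathbf{T}}_f v$ invoked later in the proof of Theorem \ref{Th:Liouvilletransmutationrelation}(ii)), though that is not part of the statement as quoted.
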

	The proof of these facts can be found in \cite[Sec. 6]{mineimpedance3}.
	
	Consider the operator $\mathbf{R}_f: L^2(0,\ell; |f|^2dx)\rightarrow L^2(0,\ell)$ given by $\mathbf{R}_fv=fv$. Note that $\mathbf{R}_f^{-1}=\mathbf{R}_{\frac{1}{f}}$. Hence the Liouville transform relating $\mathbf{S}_f$ with $\mathbf{L}_f$ is given by $\mathbf{R}_{\frac{1}{f}}: \mathscr{D}_2(\mathbf{S}_f)\rightarrow W^{2,2}(0,\ell)$, and
	\[
	\mathbf{S}_f y= \mathbf{R}_{f}\mathbf{L}_f\mathbf{R}_{\frac{1}{f}}y \qquad \forall y\in \mathscr{D}_2(\mathbf{S}_f).
	\]
	According to Remark \ref{Remark:initialconditionsunitaryoperator}, if we set $v=\mathbf{R}_{\frac{1}{f}}e_f(\rho,x)$, then $v$ is the unique solution of \eqref{eq:impedance1} satisfying the initial conditions $v(0)=1$ and $v'(0)=\mathbf{D}_fe_f(\rho,0)=i\rho$. Thus, $\widehat{e}_f(\rho,x)=\mathbf{R}_{\frac{1}{f}}e_f(\rho,x)$. Comparing the SPPS expansions of $e_f(\rho,x)$ and $\widehat{e}_f(\rho,x)$ we obtain that 
	\begin{equation}\label{eq:liouvilleformalpowers}
		\widehat{\varphi}_f^{(k)}(x)=\mathbf{R}_{\frac{1}{f}}\varphi_f^{(k)}(x)\qquad \forall k\in \mathbb{N}_0.
	\end{equation}
	
	 This relation between the formal powers yields the corresponding relation between the transmutation operators $\mathbf{T}_f$ and $\widehat{T}_f$.
	
	\begin{theorem}\label{Th:Liouvilletransmutationrelation}
		The following statements hold:
		\begin{itemize}
			\item[(i)] The operators $\mathbf{T}_f$ and $\widehat{\mathbf{T}}_f$ are related as follows:
			\begin{equation}\label{eq:transmliouville}
				\widehat{\mathbf{T}}_fv= \mathbf{R}_{\frac{1}{f}}\mathbf{T}_fv \qquad \forall v\in W^{1,2}(-\ell,\ell).
			\end{equation}
			\item[(ii)] For every $x\in (0,\ell]$, $\widehat{K}_f(x,\cdot)\in W^{1,2}(-x,x)$ and 
			\begin{equation}\label{eq:kernelderivative}
				\frac{\partial \widehat{K}_f(x,t)}{\partial t}=\frac{1}{f(x)}K_f(x,t) \qquad \text{a.e. in }\; \mathcal{T}_{\ell}.
			\end{equation}
			\item[(iii)] The kernel $\widehat{K}_f$ satisfies the Goursat conditions
			\begin{equation}\label{eq:goursatimpedance}
				\widehat{K}_f(x,x)=1-\frac{1}{f(x)},\qquad \widehat{K}_f(x,-x)=0.
			\end{equation}
		\end{itemize}
	\end{theorem}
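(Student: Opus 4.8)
The plan is to derive all three statements from the two mapping properties $\mathbf{T}_f[x^k]=\varphi_f^{(k)}$ (Theorem \ref{Th:mappingproperty}) and $\widehat{\mathbf{T}}_f[x^k]=\widehat{\varphi}_f^{(k)}$ (Theorem \ref{th:impedance1}), together with the Liouville identity $\widehat{\varphi}_f^{(k)}=\mathbf{R}_{\frac{1}{f}}\varphi_f^{(k)}$ of \eqref{eq:liouvilleformalpowers}. Chaining these gives $\widehat{\mathbf{T}}_f p=\mathbf{R}_{\frac{1}{f}}\mathbf{T}_f p$ for every polynomial $p\in\mathcal{P}[-\ell,\ell]$, by linearity. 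To obtain (i) I would extend this identity to $W^{1,2}(-\ell,\ell)$ by density: polynomials are dense in $W^{1,2}(-\ell,\ell)$ (approximate $v'$ in $L^2$ and integrate, cf. \cite[Cor. 23]{mineimpedance3}), while both $\widehat{\mathbf{T}}_f$ and $\mathbf{R}_{\frac{1}{f}}\mathbf{T}_f$ are bounded from $W^{1,2}(-\ell,\ell)$ into $L^2(0,\ell)$ — the former because $\widehat{K}_f\in L^2(\mathcal{T}_{\ell})$ and Cauchy--Schwarz controls the inner integral by $\|v'\|_{L^2}$, the latter because $\mathbf{T}_f\in\mathcal{B}(L^2(-\ell,\ell),L^2(0,\ell))$ and $\mathbf{R}_{\frac{1}{f}}$ is multiplication by $\frac{1}{f}\in L^{\infty}(0,\ell)$. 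Two continuous maps agreeing on a dense set agree everywhere, which is \eqref{eq:transmliouville}.

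For (ii) I would rewrite \eqref{eq:transmliouville} in integral form. Multiplying by $f$ turns it into $f(x)\widehat{\mathbf{T}}_f v(x)=\mathbf{T}_f v(x)$, i.e.
\begin{equation*}
f(x)v(x)-f(x)\int_{-x}^{x}\widehat{K}_f(x,t)v'(t)\,dt=v(x)+\int_{-x}^{x}K_f(x,t)v(t)\,dt,
\end{equation*}
valid for all $v\in W^{1,2}(-\ell,\ell)$; since by Theorem \ref{Th:transmdarboux} the right-hand side is continuous in $x$, so is the left, and the identity holds for every $x\in(0,\ell]$ once continuous representatives are used. Fix such an $x$ and test against $v\in C_0^{\infty}(-x,x)$, extended by zero to $(-\ell,\ell)$: then $v(x)=0$ and the identity reduces to $-f(x)\int_{-x}^{x}\widehat{K}_f(x,t)v'(t)\,dt=\int_{-x}^{x}K_f(x,t)v(t)\,dt$, which says exactly that the distributional $t$-derivative of $\widehat{K}_f(x,\cdot)$ on $(-x,x)$ equals $\frac{1}{f(x)}K_f(x,\cdot)\in L^2(-x,x)$. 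Hence $\widehat{K}_f(x,\cdot)\in W^{1,2}(-x,x)$ and \eqref{eq:kernelderivative} follows; in particular $\widehat{K}_f(x,\cdot)$ has an absolutely continuous representative equal to $\frac{1}{f(x)}\int_{-x}^{t}K_f(x,s)\,ds$ up to an additive function of $x$.

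For (iii) I would return to the integral identity displayed above, now for an arbitrary $v\in W^{1,2}(-\ell,\ell)$, and integrate by parts in $t$ using (ii):
\begin{equation*}
\int_{-x}^{x}\widehat{K}_f(x,t)v'(t)\,dt=\widehat{K}_f(x,x)v(x)-\widehat{K}_f(x,-x)v(-x)-\frac{1}{f(x)}\int_{-x}^{x}K_f(x,t)v(t)\,dt.
\end{equation*}
Substituting and cancelling the $K_f$-integrals leaves $\bigl(f(x)(1-\widehat{K}_f(x,x))-1\bigr)v(x)+f(x)\widehat{K}_f(x,-x)v(-x)=0$ for all $v\in W^{1,2}(-\ell,\ell)$. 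Since $x\neq-x$, choosing (e.g.) affine $v$ with prescribed values at the two points forces $f(x)(1-\widehat{K}_f(x,x))=1$ and, using that $f$ never vanishes, $\widehat{K}_f(x,-x)=0$; this is \eqref{eq:goursatimpedance}. (The first Goursat value can also be read off directly from $\int_{-x}^{x}K_f(x,t)\,dt=\mathbf{T}_f[1](x)-1=f(x)-1$.)

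The bookkeeping in (i) (density of polynomials, the operator bounds) and the integration by parts in (iii) are routine; the one point deserving care is in (ii), where \eqref{eq:transmliouville} is an a.e. identity in $x$ yet must be tested against functions supported in the $x$-dependent interval $(-x,x)$ — this is why one first upgrades both sides to their continuous representatives via Theorem \ref{Th:transmdarboux}. An alternative that sidesteps this altogether is to argue from the exponential representations \eqref{eq:exponentialintegral}, Theorem \ref{th:impedance1}, and the Liouville relation $\widehat{e}_f(\rho,\cdot)=\frac{1}{f}e_f(\rho,\cdot)$: integrating $\frac{1}{f(x)}\int_{-x}^{x}K_f(x,t)e^{i\rho t}\,dt$ by parts against $\Psi(x,t):=\frac{1}{f(x)}\int_{-x}^{t}K_f(x,s)\,ds$ and using $\int_{-x}^{x}K_f(x,t)\,dt=f(x)-1$ shows $\widehat{e}_f(\rho,x)=e^{i\rho x}-i\rho\int_{-x}^{x}\Psi(x,t)e^{i\rho t}\,dt$, and completeness of $\{e^{i\rho t}\}$ identifies $\widehat{K}_f(x,\cdot)=\Psi(x,\cdot)$, from which (ii), (iii), and (after one more integration by parts) also (i) follow at once.
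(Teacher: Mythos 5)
Your argument coincides with the paper's proof in all three parts: (i) is obtained by chaining the two mapping properties through the Liouville relation \eqref{eq:liouvilleformalpowers} on polynomials and then passing to the limit by density and boundedness; (ii) by upgrading both sides of the resulting integral identity to continuous representatives and testing against $C_0^{\infty}(-x,x)$; and (iii) by integrating by parts and cancelling the $K_f$-integrals. The proposal is correct and essentially identical to the paper's argument (the closing alternative via exponential representations is extra, not needed).
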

	\begin{proof}
		\begin{itemize}
			\item[(i)] By linearity, relation \eqref{eq:liouvilleformalpowers} holds in $\mathcal{P}[-\ell,\ell]$. Given $v\in W^{1,2}(-\ell,\ell)$, chose a sequence $\{p_n\}\subset \mathcal{P}[-\ell,\ell]$ such that $p_n\rightarrow v$ in $W^{1,2}(-\ell,\ell)$. Since $p_n'\rightarrow v'$ in $L^2(-\ell,\ell)$ and $\widehat{K}_f\in L^2(\mathcal{T}_{\ell})$, hence $\widehat{T}_fp_n\rightarrow \widehat{T}_fv$ in $L^2(-\ell,\ell)$. On the other hand, $\frac{1}{f}\mathbf{T}_fp_n\rightarrow \frac{1}{f}\mathbf{T}_fv$ in $L^2(-\ell,\ell)$, because $\mathbf{T}_f\in \mathcal{B}(L^2(-\ell,\ell), L^2(0,\ell))$ and $\frac{1}{f}\in C[0,\ell]$. Hence, passing to the limit, we obtain \eqref{eq:transmliouville}.
			\item[(ii)] By the previous point, the following relation is valid for all $\phi\in C^{\infty}[-\ell,\ell]$:
			\begin{equation}\label{eq:auxiliar2}
				\phi(x)-\int_{-x}^{x}\widehat{K}_f(x,t)\phi'(t)dt=\frac{\phi(x)}{f(x)}+\int_{-x}^{x}\frac{K_f(x,t)}{f(x)}\phi(t)dt.
			\end{equation}
			By Theorem \ref{th:impedance1}, the left-hand side belongs to $W^{2,2}(-\ell,\ell)$ and, in particular, is a continuous function. The right-hand side is continuous by Theorem \ref{th:continuityofoperator}. Hence equality \eqref{eq:auxiliar2} holds for all $x\in [0,\ell]$. Fix $x\in (0,\ell]$, and take $\phi \in C^{\infty}_0(-x,x)$. Thus,
			\[
			\int_{-x}^{x}\widehat{K}_f(x,t)\phi'(t)dt= \int_{-x}^{x}\frac{K_f(x,t)}{f(x)}\phi(t)dt.
			\]
			This implies that $\frac{\partial K_f(x,t)}{\partial t}=\frac{K_f(x,t)}{f(x)}$. Since $\frac{K_f(x,\cdot)}{f(x)}\in L^2(-x,x)$, we conclude (ii), as desired.
			
			\item[(iii)] Integration by parts in the left hand side of \eqref{eq:auxiliar2} yields
			\begin{align*}
				(1-\widehat{K}_f(x,x))\phi(x)+\widehat{K}_f(x,-x)\phi(-x)+\int_{-x}^{x}\frac{\partial \widehat{K}_f(x,t)}{\partial t}\phi(t)dt=\frac{\phi(x)}{f(x)}+\int_{-x}^{x}\frac{K_f(x,t)}{f(x)}\phi(t)dt
			\end{align*}
			and by the point (ii)
			\[
			(1-\widehat{K}_f(x,x))\phi(x)+\widehat{K}_f(x,-x)\phi(-x)=\frac{\phi(x)}{f(x)} \qquad \forall \phi \in C^{\infty}[-\ell,\ell].
			\]
			This implies \eqref{eq:goursatimpedance}.
		\end{itemize}
	\end{proof}
	\begin{corollary}
		\begin{itemize}
			\item[(i)]  $\widehat{\mathbf{T}}_f$ admits a continuous extension in $ \mathcal{B}(C[-\ell,\ell],C[0,\ell])$.
			\item[(ii)] $\widehat{\mathbf{T}}_f\in \mathcal{B}(W^{1,2}(-\ell,\ell), W^{1,2}(0,\ell))$.
		\end{itemize}
		
	\end{corollary}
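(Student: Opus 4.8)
The plan is to read off both claims from the Liouville factorization $\widehat{\mathbf{T}}_f=\mathbf{R}_{\frac{1}{f}}\mathbf{T}_f$ of Theorem~\ref{Th:Liouvilletransmutationrelation}(i), noting that $\mathbf{R}_{\frac{1}{f}}$ is simply multiplication by $\frac{1}{f}$ and that $\frac{1}{f}\in W^{1,2}(0,\ell)\hookrightarrow C[0,\ell]$ because $f$ is nonvanishing on $[0,\ell]$. For (i): by Theorem~\ref{th:continuityofoperator}, $\mathbf{T}_f\in\mathcal{B}(C[-\ell,\ell],C[0,\ell])$, and multiplication by $\frac{1}{f}\in C[0,\ell]$ is a bounded operator on $C[0,\ell]$ with norm at most $\|\frac{1}{f}\|_{L^\infty(0,\ell)}$; hence $\mathbf{R}_{\frac{1}{f}}\mathbf{T}_f\in\mathcal{B}(C[-\ell,\ell],C[0,\ell])$. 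By Theorem~\ref{Th:Liouvilletransmutationrelation}(i) this operator agrees with $\widehat{\mathbf{T}}_f$ on $W^{1,2}(-\ell,\ell)$, which contains $\mathcal{P}[-\ell,\ell]$ and is therefore dense in $C[-\ell,\ell]$. So $\widehat{\mathbf{T}}_f$ admits the continuous extension $\mathbf{R}_{\frac{1}{f}}\mathbf{T}_f\in\mathcal{B}(C[-\ell,\ell],C[0,\ell])$.

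For (ii), I would first record that $\mathbf{T}_f\in\mathcal{B}(W^{1,2}(-\ell,\ell),W^{1,2}(0,\ell))$. The inclusion $\mathbf{T}_f(W^{1,2}(-\ell,\ell))\subset W^{1,2}(0,\ell)$ is Theorem~\ref{Th:transmdarboux}; boundedness follows from the closed graph theorem, since convergence in $W^{1,2}$ implies convergence in $L^2$ and $\mathbf{T}_f\in\mathcal{B}(L^2(-\ell,\ell),L^2(0,\ell))$, so the graph of $\mathbf{T}_f\colon W^{1,2}(-\ell,\ell)\to W^{1,2}(0,\ell)$ is closed. (Equivalently, the identity $\mathbf{T}_f y=\mathbf{J}_f\mathbf{T}_{\frac{1}{f}}y'+y(0)f$ obtained in the proof of Theorem~\ref{Th:transmdarboux} exhibits $\mathbf{T}_f$ as a finite sum of operators manifestly bounded from $W^{1,2}(-\ell,\ell)$ into $W^{1,2}(0,\ell)$, using $\mathbf{J}_f\in\mathcal{B}(L^2(0,\ell),W^{1,2}(0,\ell))$, $\mathbf{T}_{\frac{1}{f}}\in\mathcal{B}(L^2)$, the continuity of the trace $y\mapsto y(0)$, and $f\in W^{1,2}(0,\ell)$.) Since $\frac{1}{f}\in W^{1,2}(0,\ell)$ and $W^{1,2}(0,\ell)$ is a Banach algebra (Remark~\ref{remark:bijection}), $\mathbf{R}_{\frac{1}{f}}\in\mathcal{B}(W^{1,2}(0,\ell))$; composing gives $\widehat{\mathbf{T}}_f=\mathbf{R}_{\frac{1}{f}}\mathbf{T}_f\in\mathcal{B}(W^{1,2}(-\ell,\ell),W^{1,2}(0,\ell))$.

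The only point that is not entirely formal is upgrading the set inclusion in Theorem~\ref{Th:transmdarboux} to boundedness of $\mathbf{T}_f$ between the $W^{1,2}$ spaces; the closed-graph argument above disposes of it, so I do not anticipate a genuine obstacle in this corollary.
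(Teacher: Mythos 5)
Your proof is correct and rests on the same two pillars as the paper's: the Liouville factorization $\widehat{\mathbf{T}}_f=\mathbf{R}_{\frac{1}{f}}\mathbf{T}_f$ of Theorem \ref{Th:Liouvilletransmutationrelation}(i) and the Darboux relation of Theorem \ref{Th:transmdarboux}. Part (i) is argued exactly as in the paper (boundedness of $\mathbf{R}_{\frac{1}{f}}\mathbf{T}_f$ on $C[-\ell,\ell]$ plus density). For part (ii) your packaging differs slightly: the paper computes the derivative explicitly, $D\widehat{\mathbf{T}}_f\phi=\frac{1}{f}\mathbf{T}_{\frac{1}{f}}\phi'$ for $\phi\in C^{\infty}[-\ell,\ell]$, bounds its $L^2$ norm by $\|f^{-1}\|_{L^{\infty}}\|\mathbf{T}_{\frac{1}{f}}\|_{\mathcal{B}(L^2)}\|\phi'\|_{L^2}$, and extends by density, thereby producing an explicit operator-norm constant $M$; you instead first upgrade Theorem \ref{Th:transmdarboux} to boundedness of $\mathbf{T}_f\colon W^{1,2}(-\ell,\ell)\to W^{1,2}(0,\ell)$ (the closed-graph argument is valid, since $W^{1,2}$-convergence implies $L^2$-convergence and $\mathbf{T}_f$ is $L^2$-bounded) and then compose with multiplication by $\frac{1}{f}$ in the algebra $W^{1,2}(0,\ell)$. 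Your softer route avoids explicit estimates at the cost of not exhibiting a norm bound, while your parenthetical alternative via $\mathbf{T}_fy=\mathbf{J}_f\mathbf{T}_{\frac{1}{f}}y'+y(0)f$ is essentially the paper's computation in disguise; either way there is no gap.
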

	\begin{proof}
		The statement (i) follows from Theorem \ref{Th:transmdarboux}(i), from the fact that the right-hand side of \eqref{eq:transmliouville} defines a bounded operator from $C[-\ell,\ell]$ to $C[0,\ell]$ (by Theorem \ref{th:continuityofoperator}), and from the density of $W^{1,2}(-\ell,\ell)$ in $C[-\ell,\ell]$.

		For (ii),  let $\phi \in C^{\infty}[-\ell,\ell]$. By Theorem \ref{Th:transmdarboux}, $fD\left(\frac{1}{f}\mathbf{T}_f\phi \right)=\mathbf{T}_{\frac{1}{f}}\phi'$, and by Theorem \ref{Th:Liouvilletransmutationrelation}(i) we obtain
		\begin{equation}\label{eq:relationderivativetftdarboux}
			D\widehat{\mathbf{T}}_f\phi = \frac{1}{f}\left[fD\left(\frac{1}{f}\mathbf{T}_f\phi \right)\right]=\frac{1}{f}\mathbf{T}_{\frac{1}{f}}\phi'.
		\end{equation}
		Hence
		\[
		\|D\widehat{\mathbf{T}}_f\phi\|_{L^2(-\ell,\ell)}\leq \|f^{-1}\|_{L^{\infty}(0,\ell)}\|\mathbf{T}_{\frac{1}{f}}\|_{\mathcal{B}(L^2(-\ell,\ell),L^2(0,\ell))}\|\phi'\|_{L^2(-\ell,\ell)},
		\]
		and consequently, $\|\widehat{\mathbf{T}}_f\|_{W^{1,2}(0,\ell)}\leq M \|\phi\|_{W^{1,2}(-\ell,\ell)}$, where\\ $M=\max\{1+\|\widehat{K}_f\|_{L^2(\mathcal{T}_{\ell})},\|f^{-1}\|_{L^{\infty}(0,\ell)}\|\mathbf{T}_{\frac{1}{f}}\|_{\mathcal{B}(L^2(-\ell,\ell),L^2(0,\ell))}\}$. The result follows from the fact that $C^{\infty}[-\ell,\ell]$ is dense in $W^{1,2}(-\ell,\ell)$.
	\end{proof}
	
	\begin{lemma}\label{lemma:convergencesobolev}
		Let $\Omega\subset \mathbb{R}^N$ be a bounded Lipschitz domain, and let $g\in L^2(\Omega)$. Suppose that there is a sequence $\{g_n\}\subset W^{1,2}(\Omega)$ such that:
		\begin{itemize}
			\item[(i)] $g_n  \rightharpoonup g$ weakly in $L^2(\Omega)$.
			\item[(ii)] $\displaystyle \sup_{n\in \mathbb{N}}\|\nabla g_n\|_{L^2(\Omega)}<\infty$. 
		\end{itemize}
		Then $g\in W^{1,2}(\Omega)$ and $g_n \rightharpoonup g$ weakly in $L^2(\Omega)$. Moreover, after passing to a subsequence, $g_n\rightarrow g$ in $L^2(\Omega)$ and $g_n\rightarrow g$ a.e. in $\Omega$.
	\end{lemma}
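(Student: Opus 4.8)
The plan is to run the standard Rellich--Kondrachov argument together with uniqueness of weak limits. First I would note that weak convergence in $L^2(\Omega)$ forces $\sup_{n}\|g_n\|_{L^2(\Omega)}<\infty$ (uniform boundedness), so hypothesis (ii) upgrades to $\sup_{n}\|g_n\|_{W^{1,2}(\Omega)}<\infty$; that is, $\{g_n\}$ is a bounded sequence in the reflexive Hilbert space $W^{1,2}(\Omega)$. This reduces everything to soft functional analysis plus one compactness input.

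Next I would identify the limit. Since $W^{1,2}(\Omega)$ is reflexive, any subsequence of $\{g_n\}$ admits a further subsequence converging weakly in $W^{1,2}(\Omega)$ to some $h$. The bounded inclusion $W^{1,2}(\Omega)\hookrightarrow L^2(\Omega)$ is weak-to-weak continuous, so along that subsequence $g_n\rightharpoonup h$ in $L^2(\Omega)$ as well; comparing with hypothesis (i) and using uniqueness of weak limits in $L^2(\Omega)$ gives $h=g$. In particular $g\in W^{1,2}(\Omega)$, and the usual ``every subsequence has a further subsequence'' argument, applied to an arbitrary $F\in\big(W^{1,2}(\Omega)\big)^{*}$, promotes this to $g_n\rightharpoonup g$ weakly in $W^{1,2}(\Omega)$ for the whole sequence (a statement a priori stronger than, and hence implying, the asserted weak convergence in $L^2(\Omega)$).

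For the strong conclusions I would invoke that $\Omega$ is a bounded Lipschitz domain, so the embedding $W^{1,2}(\Omega)\hookrightarrow L^2(\Omega)$ is compact (Rellich--Kondrachov; see e.g.\ \cite{brezis}). Hence the bounded sequence $\{g_n\}\subset W^{1,2}(\Omega)$ has a subsequence converging strongly in $L^2(\Omega)$, whose limit must coincide with the weak $L^2$-limit $g$; thus $g_n\to g$ in $L^2(\Omega)$ along that subsequence. Extracting once more, strong $L^2$-convergence yields a subsequence with $g_n\to g$ almost everywhere in $\Omega$.

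The only genuinely non-elementary ingredient, and therefore the main obstacle, is the compactness of the Sobolev embedding, which is precisely where the Lipschitz regularity of $\partial\Omega$ is used; the remaining steps (reflexivity, uniqueness of weak limits, the subsequence principle) are routine. The one point requiring care is bookkeeping: the weak convergences hold for the full sequence, whereas the strong and almost-everywhere convergences are only claimed after passing to a (further) subsequence.
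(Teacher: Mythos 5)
Your proposal is correct, and its second half (compact embedding for bounded Lipschitz domains, extraction of a strongly $L^2$-convergent subsequence whose limit must agree with the weak limit $g$, then a further a.e.-convergent subsequence) coincides with the paper's argument. Where you diverge is in establishing $g\in W^{1,2}(\Omega)$ and the weak convergence of the whole sequence: you argue abstractly, upgrading (i) via Banach--Steinhaus to boundedness of $\{g_n\}$ in the reflexive space $W^{1,2}(\Omega)$, extracting weak $W^{1,2}$-limits of subsequences, identifying them with $g$ through the weak-to-weak continuity of the inclusion and uniqueness of weak limits, and then invoking the subsequence principle. The paper instead works concretely at the level of distributions: it tests $g$ against $\partial_{x_i}\phi$ for $\phi\in C_0^\infty(\Omega)$, passes the bound $|\langle g|\phi_{x_i}\rangle|\le M\|\phi\|_{L^2}$ through the limit, applies the characterization of $W^{1,2}$ by bounded action on derivatives of test functions (\cite[Prop.~9.3]{brezis}), and then shows $(g_n)_{x_i}\rightharpoonup g_{x_i}$ directly by density of $C_0^\infty(\Omega)$. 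Your route is softer and avoids the explicit identification of $\nabla g$, at the cost of an extra appeal to reflexivity and the subsequence trick; the paper's route exhibits the distributional gradient of $g$ as the weak limit of the $\nabla g_n$ explicitly, which is the form in which the lemma is actually used later (convergence of the kernel derivatives). One small caveat: you attribute the compactness of $W^{1,2}(\Omega)\hookrightarrow L^2(\Omega)$ for Lipschitz $\Omega$ to \cite{brezis}, whose Rellich--Kondrachov theorem is stated for $C^1$ domains; for merely Lipschitz boundaries one needs an extension theorem, which is why the paper cites \cite[Th.~3.27 and Th.~3.30]{mclean} instead. This is a citation issue, not a mathematical gap.
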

	\begin{proof}
		Let $\phi \in C^{\infty}_0(\Omega)$. By hypothesis, $\langle g|\phi_{x_i} \rangle_{L^2(\Omega)}=\lim_{n\rightarrow \infty} \langle g_n|\phi_{x_i}\rangle_{L^2(\Omega)}$, where $\phi_{x_i}=\frac{\partial \phi}{\partial x_i}, i=1,\dots, N$. Hence
		\[
		|\langle g|\phi_{x_i}\rangle_{L^2(\Omega)}|=\lim_{n\rightarrow\infty}|\langle g_n|\phi_{x_i}\rangle_{L^2(\Omega)}|=\lim_{n\rightarrow\infty}|\langle (g_n)_{x_i}|\phi \rangle_{L^2(\Omega)}|\leq M\|\phi\|_{L^2(\Omega)},
		\]
		where $M=\sup_{n\in \mathbb{N}}\|\nabla g_n\|_{L^2(\Omega)}$. According to \cite[Prop. 9. 3]{brezis}, this implies that $g\in W^{1,2}(\Omega)$. Now, for $\phi\in C_0^{\infty}(\Omega)$, 
		\[
		\langle g_{x_i}|\phi\rangle_{L^2(\Omega)}= \langle g|\phi_{x_i}\rangle_{L^2(\Omega)}=\lim_{n\rightarrow \infty}\langle g_n|\phi_{x_i}\rangle_{L^2(\Omega)}=\lim_{n\rightarrow \infty}\langle (g_n)_{x_i}|\phi\rangle_{L^2(\Omega)}.
		\]
		By the density of $C_0^{\infty}(\Omega)$ and condition (ii), we conclude that $(g_n)_{x_i}\rightharpoonup g_{x_i}$ weakly in $L^2(\Omega)$. Then condition (i) implies that  $g_n \rightharpoonup g$ weakly in $W^{1,2}(\Omega)$.
		
		Since $\Omega$ is a bounded Lipschitz domain, \cite[Th. 3.27 and Th. 3.30]{mclean} imply that the embedding $W^{1,2}(\Omega) \hookrightarrow L^2(\Omega)$ is compact. Since $\{g_n\}$ is bounded in $W^{1,2}(\Omega)$, we obtain a subsequence, which we denote by simplicity as $\{g_n\}$, such that $g_n \rightarrow \tilde{g}$ in $L^2(\Omega)$. By (i), we have that $\tilde{g}=g$. Finally, by \cite[Th. 4.9]{brezis} there exists a subsequence such that $g_n\rightarrow g$ a.e. in $\Omega$.
	\end{proof}
	
	\begin{lemma}\label{lemma:convergenceweakhilbert}
		Let $\mathcal{H}$ be a Hilbert space, $u\in \mathcal{H}$ and $\{u_n\}\subset \mathcal{H}$ such that $u_n \rightharpoonup u$ weakly in $\mathcal{H}$. If $\{v_n\}\subset \mathcal{H}$ is a sequence such that $v_n\rightarrow v$ in $\mathcal{H}$, then $\langle u|v\rangle_{\mathcal{H}}=\lim_{n\rightarrow \infty}\langle u_n|v_n\rangle_{\mathcal{H}}$.
	\end{lemma}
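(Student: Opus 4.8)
The plan is to split the difference $\langle u_n\,|\,v_n\rangle_{\mathcal{H}}-\langle u\,|\,v\rangle_{\mathcal{H}}$ into a piece controlled by the strong convergence $v_n\rightarrow v$ and a piece controlled by the weak convergence $u_n\rightharpoonup u$. First I would write
\[
\langle u_n\,|\,v_n\rangle_{\mathcal{H}}-\langle u\,|\,v\rangle_{\mathcal{H}}=\langle u_n\,|\,v_n-v\rangle_{\mathcal{H}}+\langle u_n-u\,|\,v\rangle_{\mathcal{H}},
\]
so it suffices to show that each of the two terms on the right tends to $0$ as $n\rightarrow\infty$.

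For the second term this is immediate: by the definition of weak convergence, $\langle u_n-u\,|\,v\rangle_{\mathcal{H}}=\langle u_n\,|\,v\rangle_{\mathcal{H}}-\langle u\,|\,v\rangle_{\mathcal{H}}\rightarrow 0$. For the first term, I would use the Cauchy--Schwarz inequality to get $|\langle u_n\,|\,v_n-v\rangle_{\mathcal{H}}|\leq \|u_n\|_{\mathcal{H}}\,\|v_n-v\|_{\mathcal{H}}$. Here the only nontrivial point — and the step I expect to be the main obstacle, though it is a well-known fact — is that a weakly convergent sequence is norm-bounded: by the uniform boundedness principle (applied to the family of functionals $x\mapsto\langle u_n\,|\,x\rangle_{\mathcal{H}}$, which is pointwise bounded since $\langle u_n\,|\,x\rangle_{\mathcal{H}}$ converges for each fixed $x$), there is a constant $M$ with $\sup_{n\in\mathbb{N}}\|u_n\|_{\mathcal{H}}\leq M$. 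Combining this with $\|v_n-v\|_{\mathcal{H}}\rightarrow 0$ gives $|\langle u_n\,|\,v_n-v\rangle_{\mathcal{H}}|\leq M\|v_n-v\|_{\mathcal{H}}\rightarrow 0$.

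Adding the two estimates yields $\langle u_n\,|\,v_n\rangle_{\mathcal{H}}\rightarrow\langle u\,|\,v\rangle_{\mathcal{H}}$, which is the assertion.
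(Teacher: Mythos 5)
Your proof is correct and follows essentially the same route as the paper: the same splitting $\langle u_n|v_n\rangle-\langle u|v\rangle=\langle u_n|v_n-v\rangle+\langle u_n-u|v\rangle$, Cauchy--Schwarz with the uniform norm bound on $\{u_n\}$ for the first term, and the definition of weak convergence for the second. The only difference is that you spell out the uniform boundedness principle argument for $\sup_n\|u_n\|_{\mathcal{H}}<\infty$, which the paper simply cites as a known consequence of weak convergence.
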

	\begin{proof}
		Since $u_n \rightharpoonup u$ weakly in $\mathcal{H}$, then $M=\sup_{n\in \mathbb{N}}\|u_n\|_{\mathcal{H}}<\infty$. Hence
		\begin{align*}
			|\langle u_n|v_n\rangle_{\mathcal{H}}-\langle u|v\rangle_{\mathcal{H}}|\leq |\langle u_n|v_n-v\rangle_{\mathcal{H}}|+ |\langle u_n|v\rangle_{\mathcal{H}}-\langle u|v\rangle_{\mathcal{H}}|\leq M \|v_n-v\|_{\mathcal{H}}+|\langle u_n|v\rangle_{\mathcal{H}}-\langle u|v\rangle_{\mathcal{H}}|,
		\end{align*}
		and the right-hand side tends to zero as $n\rightarrow \infty$, by the hypotheses.
	\end{proof}
	\begin{remark}
		By \eqref{eq:kernelderivative}, the partial derivative $\frac{\partial \widehat{K}_f}{\partial t}$ satisfies the estimate 
		\begin{equation}\label{eq:estimatederivative}
			\sup_{0< x\leq \ell} \int_{-x}^{x}\left|\frac{\partial \widehat{K}_f(x,t)}{\partial t}\right|^2 \leq 8\ell\|f^{-1}\|_{L^{\infty}(0,\ell)}\ell \left(d_f+2d_f^2(\ell^2d_f+\ell)e^{\ell d_f}\right), \quad \text{where } d_f=\left\|\frac{f'}{f}\right\|_{L^2(0,\ell)}.
		\end{equation}
	\end{remark}
    
	\begin{theorem}
		The transmutation kernel $\widehat{K}_f\in C(\overline{\mathcal{T}_{\ell}})\cap W^{1,2}(\mathcal{T}_{\ell})$.
		
		Furthermore, $\widehat{K}_f$ and $\widehat{K}_{\frac{1}{f}}$ satisfy the relations
		\begin{equation}\label{eq:relationskernelsimpedance}
			\frac{\partial \widehat{K}_{\frac{1}{f}}(x,t)}{\partial x}= -f^2(x)\frac{\partial \widehat{K}_f(x,t)}{\partial t}\quad\text{and}\quad \frac{\partial \widehat{K}_f(x,t)}{\partial x}=-\frac{1}{f^2(x)}\frac{\partial \widehat{K}_{\frac{1}{f}}(x,t)}{\partial t}.
		\end{equation}
	\end{theorem}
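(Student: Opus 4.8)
The plan is to establish the two identities in \eqref{eq:relationskernelsimpedance} first and to extract the membership $\widehat{K}_f\in W^{1,2}(\mathcal{T}_{\ell})$ as a by-product of the same limiting argument, and only afterwards to prove the continuity $\widehat{K}_f\in C(\overline{\mathcal{T}_{\ell}})$ from the structure already at hand. By Lemma \ref{Lemma:convergencefn} fix a sequence $\{f_n\}\subset C^{\infty}[0,\ell]$ of normalized nonvanishing functions with $f_n,\frac{1}{f_n}\to f,\frac{1}{f}$ in $W^{1,2}(0,\ell)$ and $\frac{f_n'}{f_n}\to\frac{f'}{f}$ in $L^2(0,\ell)$; then $\{f_n\}$, $\{1/f_n\}$ converge uniformly, so $\sup_n\|f_n\|_{L^{\infty}}$, $\sup_n\|1/f_n\|_{L^{\infty}}$, $\sup_n\|f_n'/f_n\|_{L^2}$ are finite, and $\{1/f_n\}$ (again in $C^{\infty}[0,\ell]$) satisfies the hypotheses of Lemma \ref{Lemma:convergencefn} relative to $1/f$. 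For each smooth $f_n$ the regular theory of \cite{mineimpedance1} provides a classical $C^1$ kernel $\widehat{K}_{f_n}$ obeying the Goursat conditions \eqref{eq:goursatimpedance}. Differentiating $\widehat{\mathbf{T}}_{f_n}\phi(x)=\phi(x)-\int_{-x}^{x}\widehat{K}_{f_n}(x,t)\phi'(t)\,dt$ in $x$, using $\widehat{K}_{f_n}(x,x)=1-1/f_n(x)$ and $\widehat{K}_{f_n}(x,-x)=0$ to reduce the boundary terms, and comparing with \eqref{eq:relationderivativetftdarboux} (which here reads $D\widehat{\mathbf{T}}_{f_n}\phi=\frac{1}{f_n}\mathbf{T}_{1/f_n}\phi'$), I obtain $\int_{-x}^{x}\bigl(\partial_x\widehat{K}_{f_n}+\frac{1}{f_n(x)}K_{1/f_n}\bigr)(x,t)\,\phi'(t)\,dt=0$ for all $\phi\in C^{\infty}[-\ell,\ell]$ and $x\in(0,\ell]$, hence $\partial_x\widehat{K}_{f_n}=-\frac{1}{f_n}K_{1/f_n}$ a.e.\ in $\mathcal{T}_{\ell}$; since $\partial_t\widehat{K}_{1/f_n}=f_nK_{1/f_n}$ by Theorem \ref{Th:Liouvilletransmutationrelation}(ii) applied to the impedance $1/f_n$, this is exactly $\partial_x\widehat{K}_{f_n}=-\frac{1}{f_n^2}\partial_t\widehat{K}_{1/f_n}$, and exchanging $f_n\leftrightarrow 1/f_n$ yields the companion relation.

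Now I pass to the limit. By Theorem \ref{Th:Liouvilletransmutationrelation}(ii), $\partial_t\widehat{K}_{f_n}=K_{f_n}/f_n$, so $\partial_t\widehat{K}_{f_n}$ and $\partial_x\widehat{K}_{f_n}=-K_{1/f_n}/f_n$ are uniformly bounded in $L^2(\mathcal{T}_{\ell})$ by the kernel estimates from the proof of Theorem \ref{Th:integralrepresentation1} together with the uniform bounds above; together with $\|\widehat{K}_{f_n}(x,\cdot)\|_{L^{\infty}(-x,x)}\le\sqrt{2\ell}\,\|\partial_t\widehat{K}_{f_n}(x,\cdot)\|_{L^2(-x,x)}$ (from $\widehat{K}_{f_n}(x,-x)=0$) this makes $\{\widehat{K}_{f_n}\}$ bounded in $W^{1,2}(\mathcal{T}_{\ell})$. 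Writing $\int_{-x}^{x}\widehat{K}_{f_n}(x,t)p(t)\,dt=P(x)-\widehat{\mathbf{T}}_{f_n}P(x)$ with $P'=p$, and using $\widehat{\mathbf{T}}_{f_n}P\to\widehat{\mathbf{T}}_fP$ uniformly on $[0,\ell]$ — valid because $\widehat{\varphi}_{f_n}^{(k)}=\varphi_{f_n}^{(k)}/f_n\to\varphi_f^{(k)}/f=\widehat{\varphi}_f^{(k)}$ uniformly by Lemma \ref{lemma:convergenceformalpowers} and $1/f_n\to1/f$ uniformly — the completeness of the family $\mathscr{E}$ of Section 8 together with \cite[Lemma 4.8-7]{kreyszig} gives $\widehat{K}_{f_n}\rightharpoonup\widehat{K}_f$ weakly in $L^2(\mathcal{T}_{\ell})$. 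Lemma \ref{lemma:convergencesobolev} on the bounded Lipschitz domain $\mathcal{T}_{\ell}$ then yields $\widehat{K}_f\in W^{1,2}(\mathcal{T}_{\ell})$ and $\widehat{K}_{f_n}\rightharpoonup\widehat{K}_f$ weakly in $W^{1,2}(\mathcal{T}_{\ell})$, so $\partial_x\widehat{K}_{f_n}\rightharpoonup\partial_x\widehat{K}_f$ weakly in $L^2(\mathcal{T}_{\ell})$. On the other side of $\partial_x\widehat{K}_{f_n}=-\frac{1}{f_n}K_{1/f_n}$, the weak-kernel convergence theorem of Section 8 applied to $\{1/f_n\}$ gives $K_{1/f_n}\rightharpoonup K_{1/f}$ weakly in $L^2(\mathcal{T}_{\ell})$, whence $\frac{1}{f_n}K_{1/f_n}\rightharpoonup\frac{1}{f}K_{1/f}$ weakly by Lemma \ref{lemma:convergenceweakhilbert} (with $1/f_n\to1/f$ uniformly). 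Uniqueness of weak limits, together with $\frac{1}{f}K_{1/f}=\frac{1}{f^2}\partial_t\widehat{K}_{1/f}$, gives the second identity in \eqref{eq:relationskernelsimpedance}; the $f\leftrightarrow1/f$ symmetry gives the first.

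For the continuity, at the vertex I use $|\widehat{K}_f(x,t)|=\frac{1}{|f(x)|}\bigl|\int_{-x}^{t}K_f(x,s)\,ds\bigr|\le\|1/f\|_{L^{\infty}}\sqrt{2x}\,\|K_f(x,\cdot)\|_{L^2(-x,x)}\to0$ as $x\to0^+$, which matches the value $\widehat{K}_f(0,0)=0$ forced by \eqref{eq:goursatimpedance}. Away from the vertex, since $\partial_t\widehat{K}_f=K_f/f$ with $\sup_{0<x\le\ell}\|K_f(x,\cdot)\|_{L^2(-x,x)}<\infty$, the identity $\widehat{K}_f(x,t)=\frac{1}{f(x)}\int_{-x}^{t}K_f(x,s)\,ds$ shows $t\mapsto\widehat{K}_f(x,t)$ is H\"older continuous of exponent $\frac12$ with a constant independent of $x$; on the other hand $\widehat{K}_f\in W^{1,2}(\mathcal{T}_{\ell})$ forces $x\mapsto\widehat{K}_f(x,t)$ to belong to $W^{1,2}((|t|,\ell))\hookrightarrow C$ for a.e.\ $t$, and the uniform H\"older bound upgrades this (as a locally uniform limit in $x$ of such admissible slices) to continuity of $x\mapsto\widehat{K}_f(x,t)$ for \emph{every} $t$. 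These two facts give joint continuity on $\mathcal{T}_{\ell}\setminus\{(0,0)\}$, and the Goursat values on $|t|=x$, the uniform H\"older bound, and the continuity of $f$ extend it up to the two edges. Hence $\widehat{K}_f\in C(\overline{\mathcal{T}_{\ell}})$.

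The main obstacle is the smooth-case identity $\partial_x\widehat{K}_{f_n}=-\frac{1}{f_n}K_{1/f_n}$: it must be derived in the regime where $\widehat{K}_{f_n}$ is genuinely differentiable in $x$, and it is precisely the uniform $L^2(\mathcal{T}_{\ell})$-bound on $\partial_x\widehat{K}_{f_n}$ produced by it that makes the weak $W^{1,2}$-compactness of Lemma \ref{lemma:convergencesobolev} applicable — so both the Sobolev membership and the two limiting identities rest on this step. A secondary technical point is the passage, in the continuity argument, from a.e.-slicewise to genuine continuity in $x$, which hinges on the H\"older-in-$t$ estimate being uniform in $x$; this in turn requires the uniform-in-$x$ control of $\|K_f(x,\cdot)\|_{L^2(-x,x)}$ recorded in the proof of Theorem \ref{Th:integralrepresentation1}.
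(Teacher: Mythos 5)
Your argument for the $W^{1,2}$ membership and for the relations \eqref{eq:relationskernelsimpedance} follows the same strategy as the paper (smooth approximation $f_n$, uniform $L^2$ bounds on $\widehat{K}_{f_n}$ and its first derivatives, weak compactness via Lemma \ref{lemma:convergencesobolev}, and passage to the limit using Lemma \ref{lemma:convergenceweakhilbert}), but you re-derive from scratch two ingredients the paper simply cites: the smooth-case identity $\partial_x\widehat{K}_{f_n}=-f_n^{-2}\partial_t\widehat{K}_{1/f_n}$ (which the paper takes from \cite[Th. 34]{mineimpedance3}; your derivation by differentiating $\widehat{\mathbf{T}}_{f_n}\phi$ in $x$, using the Goursat conditions and \eqref{eq:relationderivativetftdarboux}, is correct and makes the section more self-contained) and the uniform $L^2(\mathcal{T}_{\ell})$ bound on $\widehat{K}_{f_n}$ itself (the paper quotes an estimate from \cite{mineimpedance3,mitesis}, whereas your bound via $\widehat{K}_{f_n}(x,-x)=0$ and the fundamental theorem of calculus is simpler and suffices). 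You also identify the limit of $\partial_x\widehat{K}_{f_n}$ by passing to the weak limit in $-\frac{1}{f_n}K_{1/f_n}$ using the Section 8 weak-convergence theorem for $K_{1/f_n}$, rather than in $-f_n^{-2}\partial_t\widehat{K}_{1/f_n}$ as the paper does; both are legitimate. Where you genuinely diverge is the continuity: the paper transforms to characteristic coordinates and passes to the limit in the Goursat integro-differential equation \eqref{eq:integraleq1}, obtaining $H$ as the solution of an integral identity whose right-hand side is manifestly continuous, while you combine the uniform-in-$x$ H\"older-$\tfrac12$ continuity in $t$ (coming from $\widehat{K}_f(x,t)=f(x)^{-1}\int_{-x}^{t}K_f(x,s)\,ds$ and the uniform bound on $\|K_f(x,\cdot)\|_{L^2(-x,x)}$) with the a.e.-slice continuity in $x$ supplied by $\widehat{K}_f\in W^{1,2}(\mathcal{T}_{\ell})$. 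Your route is more elementary and avoids the integral-equation machinery, at the cost of one representative-theoretic wrinkle you should tighten: the Fubini/ACL property only gives that, for a.e.\ $t$, the slice $x\mapsto\widehat{K}_f(x,t)$ \emph{agrees a.e.} with a continuous function, so the uniform H\"older bound produces a jointly continuous function $g$ with $g=\widehat{K}_f$ a.e.\ in $\mathcal{T}_{\ell}$ — i.e.\ a continuous representative, which is what the theorem asserts — but it does not by itself show that the specific pointwise-defined function $G(x,t)=f(x)^{-1}\int_{-x}^{t}K_f(x,s)\,ds$ is continuous in $x$ at every point; state explicitly that the conclusion concerns the continuous representative (the paper's limit argument has the same status, since its $H$ is only an a.e.\ limit). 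With that clarification the proof is complete.
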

	
	\begin{proof}
		We know that $\frac{\partial \widehat{K}_f}{\partial t}$ exists and satisfies  \eqref{eq:estimatederivative}, which implies that $\frac{\partial \widehat{K}_f}{\partial t}\in L^2(\mathcal{T}_{\ell})$. Let  $\{f_n\}\subset C^{\infty}[0,\ell]$ be a sequence of non-vanishing normalized impedance such that $f_n\to f$ in the sense of Lemma \ref{Lemma:convergencefn}. Let $\{\widehat{K}_{f_n}\}$ be the corresponding sequence of transmutation kernels. We divide the proof into  steps:
		\newline 
		
		{\it Step I: $\widehat{K}_{f_n}\rightharpoonup \widehat{K}_f$ weakly in $L^2(\mathcal{T}_{\ell}$)}. According to \cite[Th: 10]{mineimpedance3}, $\widehat{\varphi}_{f_n}^{(k)}\rightarrow \widehat{\varphi}_f^{(k)}$ uniformly on $[0,\ell]$ for all $k\in \mathbb{N}_0$. Given $p\in \mathcal{P}[-\ell,\ell]$, by \eqref{eq:transmpropimpedance}, 
		\[
		\int_{-x}^{x}\widehat{K}_{f_n}(x,t)p(t)dt=\sum_{k=0}^{N}p_k\int_{-x}^{x}\widehat{K}_{f_n}(x,t)t^k=\sum_{k=0}^{N}p_k\frac{\widehat{\varphi}_{f_n}^{(k+1)}(x)-x^{k+1}}{k+1} 
		\]
		and the right-hand side converges uniformly to $\sum_{k=0}^{N}p_k\frac{\widehat{\varphi}_f^{(k+1)}(x)-x^{k+1}}{k+1}=\int_{-x}^{x}\widehat{K}_f(x,t)p(t)dt$. This implies that $\langle \widehat{K}_{f_n}|\Phi \rangle_{L^2(\mathcal{T}_{\ell})}\rightarrow \langle \widehat{K}_f|\Phi \rangle_{L^2(\mathcal{T}_{\ell})}$ for all $\Phi \in \mathscr{E}$.
		
		In \cite[Sec. 6]{mineimpedance3} and \cite[Sec. 3. 1. 2 ]{mitesis}, the following estimate was established::
		\[
		\|\widehat{K}_{f_n}\|_{L^2(\mathcal{T}_{\ell})}^2 \leq \frac{ c^2\ell }{\pi}Ie^{4\left\|\frac{f'_n}{f_n}\right\|_{L^1(0,\ell)}},	
		\]
		where $I= \int_0^{\infty}\frac{dt}{1+t^2}$ and $c=\max\left\{\max_{|z|<1}(1+|z|)e^{-|\operatorname{Im}z|},2\right\}$. Since $\frac{f_n'}{f_n}\rightarrow \frac{f'}{f}$ in $L^2(0,\ell)$, we have $D=\sup_{n\in \mathbb{N}}\left\|\frac{f_n'}{f_n}\right\|_{L^2(0,\ell)}<\infty$, and 
		\[
		\sup_{n\in \mathbb{N}} \|\widehat{K}_{f_n}\|_{L^2(\mathcal{T}_{\ell})}^2 \leq \frac{ c^2\ell }{\pi}Ie^{4\sqrt{\ell}D}
		\]
		The density of $\operatorname{Span}\mathscr{E}$ implies that $\widehat{K}_{f_n}\rightharpoonup \widehat{K}_f$ weakly in $L^2(\mathcal{T}_{\ell})$.
		\newline
		
		\emph{Step II: $\widehat{K}_{f_n}\rightharpoonup \widehat{K}_f$ weakly in $W^{1,2}(\mathcal{T}_{\ell})$ }.  By \eqref{eq:estimatederivative}, 
		\begin{equation}\label{eq:auxiliar3}
			\sup_{n\in \mathbb{N}}\left\|\frac{\partial \widehat{K}_{f_n}}{\partial t}\right\|_{L^2(\mathcal{T}_{\ell})}^2\leq 8\ell^2M\ell( D+2D^2(\ell^2D+\ell)e^{\ell D}),
		\end{equation}
		
		where $M=\sup_{n\in \mathbb{N}}\|f_n^{-1}\|_{L^{\infty}(0,\ell)}$ (which is finite because $f_n^{-1}$ converges to $f^{-1}$ uniformly on $[0,\ell]$). Since $f_n\in C^1[0,\ell]$,  it follows from \cite[Prop. 5]{mineimpedance1} that $\{\widehat{K}_{f_n},\widehat{K}_{\frac{1}{f_n}}\}\subset C^1(\overline{\mathcal{T}_{\ell}})$, and according to \cite[Th. 34]{mineimpedance3}, the following relation holds
		\begin{equation}\label{eq:auxiliar4}
			\frac{\partial \widehat{K}_{f_n}(x,t)}{\partial x}=-\frac{1}{f_n^2(x)}\frac{\partial \widehat{K}_{\frac{1}{f_n}}(x,t)}{\partial t}\qquad \text{ for all  } (x,t)\in \overline{\mathcal{T}_{\ell}}.
		\end{equation}
		Thus, by \eqref{eq:auxiliar3} we have that $\sup_{n\in \mathbb{N}}\left\|\frac{\partial \widehat{K}_{f_n}(x,t)}{\partial x}\right\|_{L^2(\mathcal{T}_{\ell})}^2< \infty$. By Lemma \ref{lemma:convergencesobolev}, we conclude that $\widehat{K}_{f_n}\rightharpoonup \widehat{K}_f$ in $W^{1,2}(\mathcal{T}_{\ell})$, and, in particular, that $\widehat{K}_f\in W^{1,2}(\mathcal{T}_{\ell})$.
		
		Since $\frac{\partial \widehat{K}_{f_n}}{\partial x}, \frac{\partial \widehat{K}_{f_n}}{\partial t}$ converge weakly to $\frac{\partial \widehat{K}_{f_n}}{\partial x}, \frac{\partial \widehat{K}_{f_n}}{\partial t}$  in $L^2(\mathcal{T}_{\ell}$), then relation \eqref{eq:auxiliar4} implies \eqref{eq:relationskernelsimpedance}.
		\newline
		
		\emph{Step III: $\widehat{K}_f\in C(\overline{\mathcal{T}_{\ell}})$.}  Let us consider the change of variables $\xi=\frac{x+t}{2}, \zeta=\frac{x-t}{2}$, which transform $\mathcal{T}_{\ell}$ into the triangle $\mathcal{Q}_{\ell}= \{ (\xi,\zeta)\in \mathbb{R}^2\, |\, \xi,\zeta\geq 0 \text{ and } \xi+\zeta< \ell\}$ (see Figure \ref{fig:triangulos}).
		
		\begin{figure}[h]
			\centering
			\includegraphics[width=11.5cm]{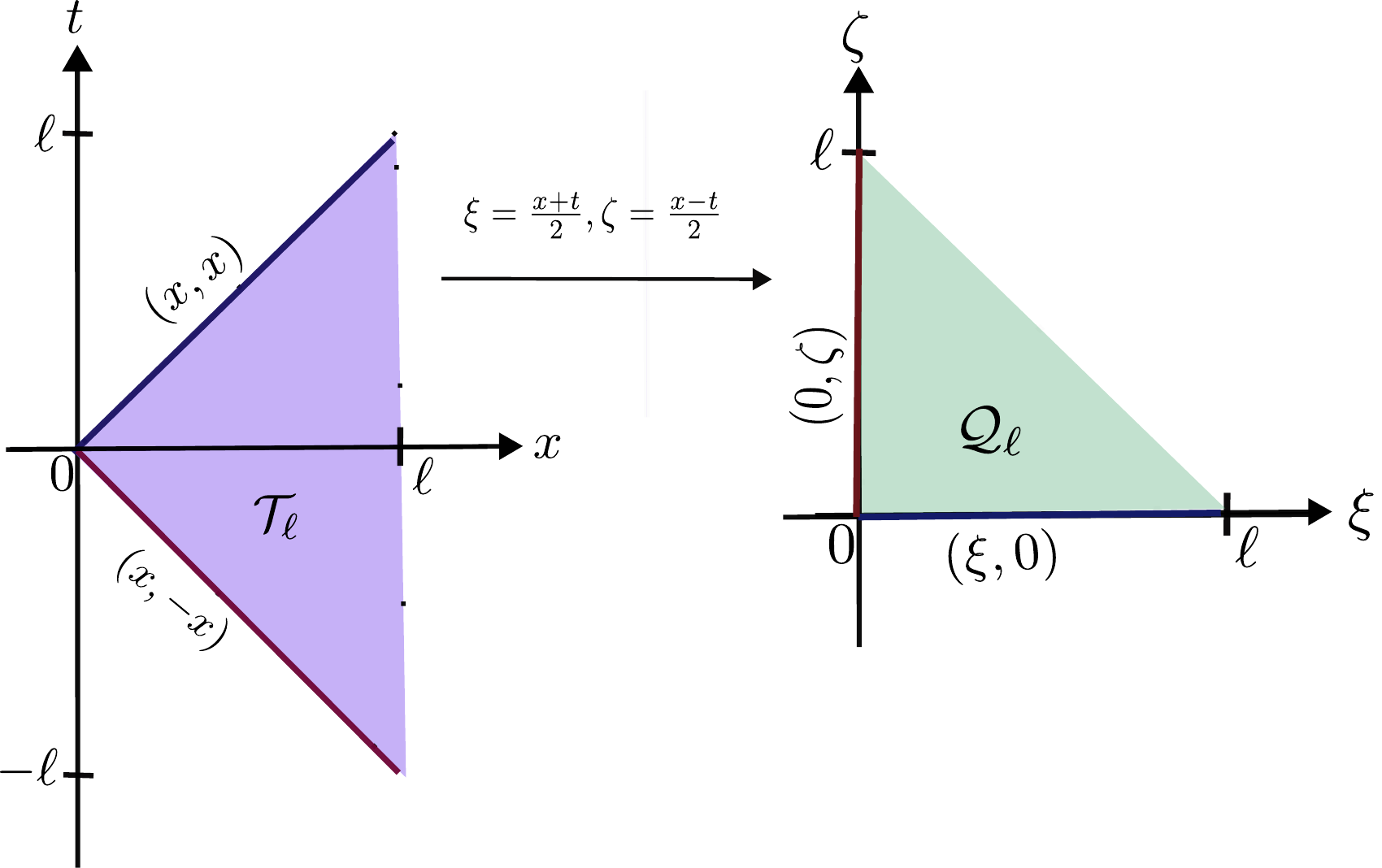}
			\caption{The domains $\mathcal{T}_{\ell}$ and $\mathcal{Q}_{\ell}$ and the transformation between them.}
			\label{fig:triangulos}
		\end{figure}
		
		Denote $\tau_n=-2\frac{f'_n}{f_n}$ and $\tau=-2\frac{f'}{f}$, so $\tau_n\rightarrow \tau$ in $L^2(0,\ell)$.

		Define $H_n(\xi,\zeta)=\widehat{K}_{f_n}(\xi+\zeta,\xi -\zeta)$ and $H(\xi,\zeta)=\widehat{K}_f(\xi+\zeta,\xi -\zeta)$. Since this change of variables is an invertible linear transformation, it follows that $H_n \rightharpoonup H$ weakly in $W^{1,2}(\mathcal{Q}_{\ell})$. By Lemma \ref{lemma:convergencesobolev}, passing to a subsequence, we may assume that $H_n\rightarrow H$ a.e. in $\mathcal{Q}_{\ell}$.

		According to \cite[Th. 4 and Prop. 5]{mineimpedance1}, the functions $H_n$ satisfy the  integro-differential equation
		\begin{equation}\label{eq:integraleq1}
			H_n(\xi,\zeta)=1-\frac{1}{f_n(\xi)}+\frac{1}{2}\int_0^{\xi}d\alpha\int_0^{\zeta}d\beta\tau_n(\alpha+\beta)\mathfrak{D}H_n(\alpha,\beta)\qquad \forall (\xi,\zeta)\in \overline{\mathcal{Q}_{\ell}},
		\end{equation}
		where $\mathfrak{D}:=\frac{\partial}{\partial x}+\frac{\partial}{\partial t}$. Observe that $\mathfrak{D}H_n(\xi,\zeta)=2\frac{\partial \widehat{K}_n}{\partial x} (\xi+\zeta,\xi-\zeta)$, so $\mathfrak{D}H_n\rightharpoonup \mathfrak{D}H$ weakly in $L^2(\mathcal{Q}_{\ell})$.

		For $(\xi,\zeta)\in \mathcal{Q}_{\ell}$, let $C_{(\xi,\zeta)}$ denote the rectangle in $\mathcal{Q}_{\ell}$ with vertices $(0,0), (0,\xi), (\xi,\zeta)$ and $(\zeta,0)$ (see Figure \ref{fig:square}). Set $\tilde{\tau}_n^{(\xi,\zeta)}(\alpha,\beta)=\tau_n(\alpha+\beta)\chi_{C_{(\xi,\zeta)}}(\alpha,\beta)$. It is a straightforward to show that for every $(\xi,\zeta)\in \mathcal{Q}_{\ell}$, $\tilde{\tau}_n^{(\xi,\zeta)}\rightarrow \tilde{\tau}^{(\xi,\zeta)}$ in $L^2(\mathcal{Q}_{\ell})$.
		\begin{figure}[h]
			\centering
			\includegraphics[width=5.5cm]{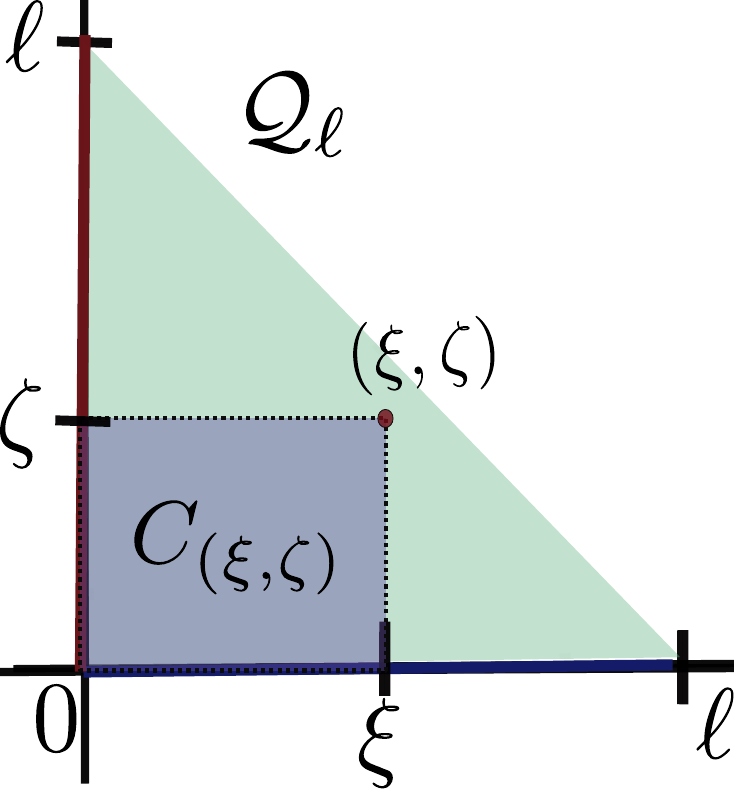}
			\caption{The rectangle $C_{(\xi,\zeta)}$ contained within $\mathcal{Q}_{\ell}$.}
			\label{fig:square}
		\end{figure}
		With these notations, \eqref{eq:integraleq1} can be rewritten as
		\begin{equation*}
			H_n(\xi,\zeta)=1-\frac{1}{f_n(\xi)}+\frac{1}{2}\iint_{\mathcal{T}_{\ell}} \tilde{\tau}_n^{(\xi,\zeta)}\mathfrak{D}H_n =1-\frac{1}{f_n(\xi)}+\frac{1}{2}\langle \tilde{\tau}_n^{(\xi,\zeta)}|\overline{\mathfrak{D}H_n} \rangle_{L^2(\mathcal{Q}_{\ell})}
		\end{equation*}
		The left-hand side converges a.e. to $H$, while $1-\frac{1}{f_n}\rightarrow 1-\frac{1}{f}$ uniformly on $\xi$. Since $\mathfrak{D}H_n\rightharpoonup \mathfrak{D}H$ weakly in $L^2(\mathcal{Q}_{\ell})$ and $\tilde{\tau}_n^{(\xi,\zeta)}\rightarrow \tilde{\tau}^{(\xi,\zeta)}$ strongly, Lemma \ref{lemma:convergenceweakhilbert} yields
		\begin{align*}
			H(\xi,\zeta)& =1-\frac{1}{f(\xi)}+\frac{1}{2}\langle \tilde{\tau}^{(\xi,\zeta)}|\overline{\mathfrak{D}H} \rangle_{L^2(\mathcal{Q}_{\ell})}\\
			&= 1-\frac{1}{f(\xi)}+\frac{1}{2}\int_0^{\xi}d\alpha\int_0^{\zeta}d\beta\tau(\alpha+\beta)\mathfrak{D}H(\alpha,\beta).
		\end{align*}
		The expression on the right-hand side is continuous in $\overline{\mathcal{Q}_{\ell}}$, so $H\in C(\overline{\mathcal{Q}_{\ell}})$. Therefore, $\widehat{K}_f\in C(\overline{\mathcal{T}_{\ell}})$.
	\end{proof}

	\begin{remark}\label{eq:remarkGoursatproblem}
		According to \cite[Remark 9]{mineimpedance1}, each kernel $\widehat{K}_{f_n}$ satisfies 
		\[
		\iint_{\mathcal{T}_{\ell}}f_n^2\left\{(\widehat{K}_{f_n})_x\phi_x-(\widehat{K}_{f_n})_t\phi_t\right\}=0\qquad \forall \phi \in C^{\infty}_0(\mathcal{T}_{\ell}).
		\]
		This can be rewritten as 
		\[
		\langle (\widehat{K}_{f_n})_x|\overline{f_n^2\phi_x}\rangle_{L^2(\mathcal{T}_{\ell})}-\langle (\widehat{K}_{f_n})_t|\overline{f_n^2\phi_t}\rangle_{L^2(\mathcal{T}_{\ell})}=0.
		\]
		Since $f_n^2\rightarrow f^2$ uniformly on $[0,\ell]$, it follows that $f_n^2\phi_x,f_n^2\phi_t\rightarrow f^2\phi_x,f^2\phi_t$ strongly in $L^2(\mathcal{T}_{\ell})$. Since $(\widehat{K}_{f_n})_x,(\widehat{K}_{f_n})_t\rightharpoonup (\widehat{K}_f)_x,(\widehat{K}_f)_t$ weakly in $L^2(\mathcal{T}_{\ell})$,  Lemma \ref{lemma:convergenceweakhilbert} implies 
		\begin{equation}\label{eq:hypeboliceq}
			\iint_{\mathcal{T}_{\ell}}f^2\left\{(\widehat{K}_f)_x\phi_x-(\widehat{K}_f)_t\phi_t\right\}=0\qquad \forall \phi \in C^{\infty}_0(\mathcal{T}_{\ell}).
		\end{equation}
		Therefore, $\widehat{K}_f$ is a weak solution of the hyperbolic equation
		\begin{equation}\label{eq:hyperbolic2}
			\frac{\partial}{\partial x}\left(f^2(x)\frac{\partial \widehat{K}_f(x,t)}{\partial x}\right)=f^2(x)\frac{\partial^2\widehat{K}_f(x,t)}{\partial t^2},\quad (x,t)\in \mathcal{Q}_{\ell},
		\end{equation}
		satisfying the Goursat conditions \eqref{eq:goursatimpedance}.
	\end{remark}

	\section{Conclusions}
	A new factorization of the distributional Schrödinger operator, based on Polya's factorization, has been established. Using this factorization, a Volterra integral transmutation operator was constructed, with its kernel represented as a Fourier–Legendre series. This representation allows the solutions of the Schrödinger equation to be expressed as a Neumann series of spherical Bessel functions, with coefficients that can be computed via a simple recursive integration procedure. Furthermore, Polya factorization enables the derivation of an integro-differential transmutation operator for the Sturm–Liouville equation in impedance form. The connection between the Schrödinger and impedance operators via Polya factorization provides a unified framework for their theories and spectral problems. In practical spectral problem solving, NSBF series have proven to be a highly effective tool for the numerical treatment of both direct and inverse Sturm–Liouville problems, applicable to both the impedance and Schrödinger equations \cite{NSBF1,vkinverse1,vkinverse2}.
    \newline 
    
    {\bf Acknowlegdments:}
	The author thanks to Instituto de Matemáticas de la U.N.A.M. Unidad Querétaro (México), where this
	work was developed, and the SECIHTI for their support through the program {\it Estancias Posdoctorales por México Convocatoria 2023 (I)}.
	
	The author also thanks Prof. Sergii M. Torba for his valuable discussions and insightful suggestions concerning Section 7.
	\newline
	
	{\bf Conflict of interest:} This work does not have any conflict of interest.
    \newline
    
    
	

\begin{thebibliography}{99}
		\bibitem{alveberio} \textsc{S. Albeverio, R. Hryniv, Ya. Mykytyuk}, \textit{Inverse spectral problems for Sturm-Liouville operators in impedance form}, Journal of Functional Analysis 222 (2005), 143-177
		
		\bibitem{binding} \textsc{P. A. Binding, P. J. Browne, B. A. Watson}, \textit{Darboux transformations and the factorization of generalized Sturm–Liouville problems}, Proc. Roy. Soc. Edinburgh Sect. A 140 (2010), no. 1, 1–29
		
		\bibitem{bondarenko} \textsc{N. P. Bondarenko}, \textit{Solving an inverse problem for the Sturm-liouville operator with a singular potential by Yurko's method}, Tamkang J. Math. 52(1), 125–154 (2021).
		
		\bibitem{brezis} \textsc{H. Brezis}, \textit{Functional Analysis, Sobolev Spaces and Partial Differential Equations}, 1st. Edition, Springer, 2010. 
		
		\bibitem{sppscampos} \textsc{H. Blancarte, H. Campos, K. V. Khmelnytskaya}, \textit{Spectral parameter power series for discontinuous coefficients}, Math. Meth. in the Appl. Sci. 38(10) (2015), 2000-2011. 
		
		\bibitem{camporesi} R. Camporesi, A. J. Di Scala, \textit{A generalization of a theorem of Mammana}, Colloq. Math. 122, 215–223 (2011)
		
		\bibitem{camposlbases} \textsc{H. Campos, V. V. Kravchenko, S. M. Torba}, \textit{Transmutations, L-bases and
			complete families of solutions of the stationary Schrödinger equation in the plane},
		J. Math. Anal. Appl. 389 (2012), no. 2, 1222-1238.
		
		\bibitem{camposstandard} \textsc{H. Campos}, \textit{Standard transmutation operators for the one dimensional Schr\"odinger operator with a locally integrable potential}, J. Math. Anal. Appl. 453(1) (2017), 64-81.
		
		\bibitem{carroll} \textsc{R. Carroll, F. Santosa}, \textit{Scattering Techniques for a One Dimensional Inverse Problem in Geophysics}, Math. Meth. in the Appl. Sci. 3
		(1981) 145–171.
		
		\bibitem{cooper} \textsc{F. Cooper, A. Khare, U. Sukhatme}, \textit{Supersymmetry in quantum mechanics}, World Scientific Publishing Co., Inc., River Edge, NJ, 2001.
		
		\bibitem{delsarte} \textsc{J. Delsarte}, \textit{Sur certaines transformations fonctionnelles relatives aux équations linéaires aux dérivé es partielles de second ordre}, C. R. Acad. Sci. 206, 178-182 (1938).
		
		\bibitem{fage} \textsc{M. K. Fage, N. I. Nagnibida}, \textit{The Problem of the Equivalence of Ordinary Linear Differential Operators}. Nauka, Novosibirsk, USSR, 1987 (in Russian).
		
		\bibitem{falconer} \textsc{K. Falconer}, \textit{Fractal Geometry: Mathematical Foundations and Applications, 3rd Edition}, John Wiley, 2014.
		
		\bibitem {folland}\textsc{G. B. Folland}, \textit{Real Analysis, Modern Techniques and Their Applications}, 2nd. Edition, New-York: Wiley, 1999.
		
		\bibitem{gelfand} \textsc{I.M. Gelfand, B.M. Levitan}, \textit{On the determination of a differential equation
			from its spectral function}, Izvestiya AN SSSR, Ser. matem. 15(4), 309–360 (1951)
		
		\bibitem{guliyev} \textsc{N. J. Guliyev}, \textit{Essentially isospectral transformations and their applications}, Annali di Matematica 199, 1621–1648 (2020).
		
		\bibitem{sppsnelson} \textsc{N. Guti\'errez-Jim\'enez, S. M. Torba},
		\textit{Spectral parameter power series representation for solutions of linear system of two first order differential equations}, Applied Mathematics and Computation,
		Vol. 370 (2020) 124911.
		
		\bibitem{haratumyant} \textsc{T. Harutyunyan, Y. Ashrafyan}, \textit{Spectral Theory of Dirac Operators}, Gitutyun publishing house of NAS RA,
		2023, arXiv:2403.02761
		
		\bibitem{hryniv1} \textsc{R. Hryniv, Ya. Mykytyuk}, \textit{Transformation operators for Sturm-Liouville operators with singular potentials}, Math. Phys. Anal. Geom. v. 7 (2004) 119–149.
		
		\bibitem{vkinverse1} \textsc{V. V. Kravchenko}, \textit{On a method for solving the inverse Sturm-Liouville problem},  Journal of Inverse and Ill-Posed Problems v. 27 (2019), no. 3, 401–407. 
		
		\bibitem{vkinverse2} \textsc{V. V. Kravchenko}, \textit{Reconstruction techniques for complex potentials},  Journal of Mathematical Physics, 2024; 65 (3): 033501
		
		\bibitem{NSBF1} \textsc{V. V. Kravchenko, L.J. Navarro, S.M. Torba}, \textit{Representation of solutions to the one-dimensional Schrödinger equation in terms of Neumann
			series of Bessel functions.} Appl. Math. Comput. 314(1) (2017) 173–192.
		
		\bibitem{spps} \textsc{V. V. Kravchenko, R. M. Porter}, \textit{Spectral parameter power series for Sturm-Liouville problems}, Math. Methods Appl. Sci. 33 (2010)
		459–468.
		
		
		\bibitem{KravTremblay1} \textsc{V. V. Kravchenko,  S. Morelos,  S. Tremblay}, \textit{Complete systems of recursive integrals and Taylor series for solutions of Sturm-Liouville equations}. Mathematical Methods in the Applied Sciences (2012), v. 35, issue 6, 704–715
		
		\bibitem{vkstinverse2}   \textsc{V. V. Kravchenko, S. M. Torba}, \textit{ A direct method for solving inverse Sturm-Liouville problems. Inverse Problems} v. 37, 2021, no. 015015 (32 pp)
		
		\bibitem{mineimpedance1} \textsc{V. V. Kravchenko, V. A. Vicente-Ben\'{\i}tez}, \textit{Transmutation operators method for Sturm-Liouville equations in impedance form I: construction and analytical properties}, J Math Sci (2022) 266: 103-132. Doi: 10.1007/s10958-022-05875-z
		
		\bibitem{mineimpedance2} \textsc{V. V. Kravchenko, V. A. Vicente-Ben\'{\i}tez}, \textit{Transmutation operators method for Sturm-Liouville equations in impedance form II: inverse problem}, J Math Sci (2022) 266: 554-575, Doi: 10.1007/s10958-022-05892-y 
		
		\bibitem{minedeltas}  \textsc{V. V. Kravchenko, V. A. Vicente-Ben\'{\i}tez},\textit{ Schr\"odinger equation with finitely many $\delta$-interactions: closed form, integral and series representations for the solutions}, Anal. Math. Phys. 14, 97 (2024) https://doi.org/10.1007/s13324-024-00957-4
		
		\bibitem{kreyszig} \textsc{E. Kreyszig}, \textit{Introductory functional analysis with applications}, New York, Wiley, 1978.
		
		\bibitem{kolmogorov} \textsc{A. N. Kolmogorov, S. V. Fomin}, \textit{Elements of the Theory of Functions and
			Functional Analysis, Vol. 2}, New York: Graylock Press, 1975
		
		\bibitem{marchenko} \textsc{V. A. Marchenko}, \textit{Sturm-Liouville operators and applications}, Birkh\"auser, Basel, 1986.
		
		\bibitem{mclean} \textsc{W. McLean},\textit{ Strongly Elliptic Systems and Boundary Integral Equations}, Cambridge University Press, 2000.
		
		\bibitem{miklavic} \textsc{M. Miklavcic}, \textit{Applied Functional Analysis and Partial Differential Equations},
		World Scientific, Singapore, 1998.
		
		
		\bibitem{povzner} \textsc{A. Ya. Povzner}, \textit{On differential equations of Sturm-Liouville type on a half-line}, Mat. Sb. 23, issue 1, 3-52, 1948 (in Russian).
		
		
		\bibitem{shkalikov1} \textsc{A. M. Savchuk, A. A. Shkalikov}, \textit{Sturm-Liouville operators with singular potentials}, Math. Notes 66, 741-753, 1999.
		
		\bibitem{minedarboux} \textsc{V. A. Vicente-Ben\'itez}, \textit{Transmutation operators and complete systems of solutions for the radial Bicomplex Vekua equation},J. Math. Anal. Appl. 536 (2024).
		
		\bibitem{mineimpedance3}\textsc{V. A. Vicente-Benítez}, \textit{Complete systems of solutions, transmutations and Darboux transform for Sturm-Liouville equations in impedance form
		}, to appear in Math. Methods Appl. Sci,  preprint available at	arXiv:2506.16604.
		
		
		\bibitem{mitesis} \textsc{V. A. Vicente-Ben\'itez}, \textit{Transmutation operators:  construction and application in boundary value and inverse spectral problems}. Ph. D. Thesis. Centro de Investigación y de Estudios Avanzados del Instituto Politécnico Nacional. M\'exico, 2022. https://repositorio.cinvestav.mx/handle/cinvestav/4111      
		
		\bibitem{warner} \textsc{Garth Warner}, \textit{Analysis 101: Curves and Lenght}, available at arXiv:2311.16418v1.
	\end{thebibliography}
\end{document}